\newtheorem{theo}{Theorem}
\newtheorem{lem}{Lemma}
\newtheorem{prop}{Proposition}
\newtheorem{rem}{Remark}
\newcommand{\E}{\mathbb{E}}
\newcommand\blfootnote[1]{%
  \begingroup
  \renewcommand\thefootnote{}\footnote{#1}%
  \addtocounter{footnote}{-1}%
  \endgroup
}
\title[Fluctuations in Salem--Zygmund almost sure central limit theorem]{Fluctuations in Salem--Zygmund \\ almost sure central limit theorem}
\author{J\"urgen Angst}
\address{Univ Rennes, CNRS, IRMAR - UMR 6625, F-35000 Rennes, France}
\email{jurgen.angst@univ-rennes1.fr}
\author{Guillaume Poly}
\address{Univ Rennes, CNRS, IRMAR - UMR 6625, F-35000 Rennes, France}
\email{guillaume.poly@univ-rennes1.fr}
\subjclass[1991]{Primary: 60F17, Secondary: 60F05, 42A05, 26C10, 12D10}
\begin{document}

\begin{abstract}
Let us consider two sequences of independent and identically distributed random variables $(a_k)_{k \geq 1}$ and $(b_k)_{k \geq 1}$ defined on a probability space $(\Omega, \mathcal F, \mathbb P)$, following a symmetric Rademacher distribution and the associated random trigonometric polynomials $S_n(\theta)= \frac{1}{\sqrt{n}} \sum_{k=1}^n a_k \cos(k\theta)+b_k \sin(k\theta)$. A seminal result \cite[Theorem 3.1.1]{salem1954} by Salem and Zygmund, ensures that $\mathbb{P}-$almost surely, $\forall t\in\mathbb{R}$
\begin{equation}\label{SZAbs-CLT}
\lim_{n \to +\infty} \frac{1}{2\pi}\int_0^{2\pi} e^{i t S_n(\theta)}d\theta=e^{-t^2/2}.
\end{equation}
This result was then further generalized in various directions regarding for instance the coefficients distribution, their structure of dependency, the dimension and the nature of the ambient manifold, or else the supremum norm of $S_n(\cdot)$. One may consult \cite{AP2021,benatar2020moments,gass2020almost,kahane1993some,weber2006stronger} and the references therein. To the best of our knowledge, the natural question of the fluctuations in the limit \eqref{SZAbs-CLT} has not been tackled so far and is precisely the object of this article. Namely, for general i.i.d. symmetric random coefficients having a finite sixth-moment and for a large class of continuous test functions $\phi$ we prove that
\[
\sqrt{n}\left(\frac{1}{2\pi}\int_0^{2\pi} \phi(S_n(\theta))d\theta-\int_{\mathbb{R}}\phi(t)\frac{e^{-\frac{t^2}{2}}dt}{\sqrt{2\pi}}\right)\xrightarrow[n\to\infty]{\text{Law}}~\mathcal{N}\left(0,\sigma_{\phi}^2+\frac{c_2(\phi)^2}{2}\left(\mathbb{E}(a_1^4)-3\right)\right).
\]
Here, the constant $\sigma_{\phi}^2$ is explicit and corresponds to the limit variance in the case of Gaussian coefficients and $c_2(\phi)$ is the coefficient of order $2$ in the decomposition of $\phi$ in the Hermite polynomial basis. Surprisingly, it thus turns out that the fluctuations are not universal since they both involve the kurtosis of the coefficients and the second coefficient of $\phi$ in the Hermite basis.\par
\medskip
Asides from the result which has its own interest, the method we develop is robust and enables one to use Wiener chaotic expansions for proving central limit Theorems for general random fields which are not restricted to be functionals of an underlying Gaussian field and thus it opens the door to many other applications, especially regarding central fluctuations for nodal functionals for non-Gaussian random fields. The cornerstone of our method gathers several seminal results of the so-called Malliavin--Stein method \cite{nualart2005central,peccati2005gaussian,nourdin2009stein,mossel2010noise,nourdin2010invariance} with algebraic considerations around Newton sums and the Newton--Girard formula. It is not based on the moment method but instead exploits some invariance principles which roughly speaking enable to extend CLTs for homogeneous sums in Gaussian variables to the case of general random variables.
\end{abstract}

\if{Abstract arXiv
Let us consider i.i.d. random variables $\{a_k,b_k\}_{k \geq 1}$ defined on a common probability space $(\Omega, \mathcal F, \mathbb P)$, following a symmetric Rademacher distribution and the associated random trigonometric polynomials $S_n(\theta)= \frac{1}{\sqrt{n}} \sum_{k=1}^n a_k \cos(k\theta)+b_k \sin(k\theta)$. A seminal result by Salem and Zygmund ensures that $\mathbb{P}-$almost surely, $\forall t\in\mathbb{R}$
\[
\lim_{n \to +\infty} \frac{1}{2\pi}\int_0^{2\pi} e^{i t S_n(\theta)}d\theta=e^{-t^2/2}.
\]
This result was then further generalized in various directions regarding the coefficients distribution, their dependency structure or else the dimension and the ambient manifold. To the best of our knowledge, the natural question of the fluctuations in the above limit has not been tackled so far and is precisely the object of this article. Namely, for general i.i.d. symmetric random coefficients having a finite sixth-moment and for a large class of continuous test functions $\phi$ we prove that
\[
\sqrt{n}\left(\frac{1}{2\pi}\int_0^{2\pi} \phi(S_n(\theta))d\theta-\int_{\mathbb{R}}\phi(t)\frac{e^{-\frac{t^2}{2}}dt}{\sqrt{2\pi}}\right)\xrightarrow[n\to\infty]{\text{Law}}~\mathcal{N}\left(0,\sigma_{\phi}^2+\frac{c_2(\phi)^2}{2}\left(\mathbb{E}(a_1^4)-3\right)\right).
\]
Here, the constant $\sigma_{\phi}^2$ is explicit and corresponds to the limit variance in the case of Gaussian coefficients and $c_2(\phi)$ is the coefficient of order $2$ in the decomposition of $\phi$ in the Hermite polynomial basis. Surprisingly, it thus turns out that the fluctuations are not universal since they both involve the kurtosis of the coefficients and the second coefficient of $\phi$ in the Hermite basis.
\end{altabstract}
}\fi

\maketitle

\textcolor{white}{blanc}
\blfootnote{Univ Rennes, CNRS, IRMAR - UMR 6625, F-35000 Rennes, France.}
\blfootnote{This work was supported by the ANR grant UNIRANDOM, ANR-17-CE40-0008.}

\section{Introduction and statement of the results}
Let us first introduce our basic notations, motivate the problematic in consideration and state our main results.
\subsection{Setting and motivations}\label{setting-gaussien}
On a suitable probability space $(\Omega,\mathcal{F},\mathbb{P})$, we consider a family of real random variables $\{a_k,b_k\}_{k\ge 1}$  that are all independent and identically distributed, their common distribution being symmetric with unit variance. We then consider the product probability space 
\[
\left(\Omega,\mathcal{F},\mathbb{P}\right)\otimes\left([0,2\pi],\mathcal{B}\left([0,2\pi]\right),\frac{dx}{2\pi}\right)
\]
on which we define the new random variable $X:(\omega ,x)\in\Omega \times[0,2\pi]\mapsto x$, whose law $\mathbb P_X$ is then naturally uniformly distributed on $[0,2\pi]$, i.e.  $\mathbb{P}_X=\frac{dx}{2\pi}$, and by construction $X$ is independent of the whole sequence $\{a_k,b_k\}_{k\ge 1}$. Besides, for any random variable $Z\in L^1\left(\mathbb{P}\otimes \mathbb{P}_X\right)$ we set
\begin{eqnarray*}
\mathbb E[Z]&=&\mathbb{E}_{\mathbb{P}}\left[Z\right]:=\int_{\Omega_1}Z(\omega, \cdot ) d\mathbb{P}(\omega)\in L^1\left(\frac{dx}{2\pi}\right),\\
\mathbb{E}_{X}\left[Z\right]&:=&\frac{1}{2\pi}\int_0^{2\pi}Z(\cdot, x) dx\in L^1(\mathbb{P}).
\end{eqnarray*}

\noindent
For $x \in \mathbb R$, we then define:
\begin{equation}\label{defi-S_n}
S_n(x):=\frac{1}{\sqrt{n}}\sum_{k=1}^n a_k \cos(kx)+b_k\sin(kx).
\end{equation}

In this setting, the celebrated almost sure central limit Theorem established by Salem and Zygmund  in \cite{salem1954} asserts that $\mathbb P-$almost surely, for any $t \in \mathbb R$, as $n$ goes to infinity 
\[
\mathbb E_X \left[ e^{i t S_n(X)}\right] \xrightarrow[n\to\infty]~ e^{-t^2/2}.
\]
In other words, $\mathbb P-$almost surely, as $n$ goes to infinity, $S_n(X)$ converges in distribution under $\mathbb P_X$ towards a standard Gaussian $\mathcal N(0,1)$.
Equivalently, if we denote by 
\[
\gamma(dx):=e^{-\frac{x^2}{2}}\frac{dx}{\sqrt{2\pi}}
\]
the standard Gaussian distribution, then for any continuous and bounded function $\phi$ we have $\mathbb{P}-$almost surely that

\begin{equation}\label{SZ-CLT}
\mathbb{E}_X\left[\phi\left(S_n(X)\right)\right])\xrightarrow[n\to\infty]~ \gamma(\phi):=\int_{\mathbb{R}}\phi(x)\gamma(dx).
\end{equation}

Our goal in this article is to address the natural question of the fluctuations in the limit \eqref{SZ-CLT} above. In particular, one would like to make precise the order of magnitude of the convergence and after a proper normalization, exhibit a non-degenerate limit in distribution. Among the natural related questions, one would like to know if the latter limit in distribution is Gaussian or not, and if and how it depends on the particular common law of the coefficients $\{a_k,b_k\}_{k\ge 1}$, that is to say determining whether the fluctuations are universal or not in Salem--Zygmund CLT.
\par
\medskip
Our initial motivation is a better understanding of the almost sure behavior of random trigonometric polynomials initiated in our recent work \cite{AP2021}, where we quantified the original CLT by Salem--Zygmund in some appropriate metrics, and moreover showed that it can be extended to a functional setting.  The underlying model of random trigonometric polynomials is of course interesting in itself but it can also be seen as a toy model for the more difficult study of random linear combinations of Laplace eigenfunctions on Riemannian manifolds where similar Salem--Zygmund type results can be established. One may see for instance \cite{gass2020almost} and the references therein.
\par
\medskip
Furthermore, the use of Salem--Zygmund like CLTs has recently proven its efficiency in order to establish the almost sure asymptotics for the number of zeros or nodal volume of different models of random functions \cite{AP2021, APP21, gass2020almost}. Indeed, as mentioned above, the almost sure central limit of Salem--Zygmund can be extended to a functional setting whereas in the other hand, the number of zeros and the nodal volume can both be expressed as expectations of certain functionals of the random field with respect to some random variable $X$ uniformly distributed on the ambient manifold. Such representation formulae for the nodal volume coupled with the knowledge of the limit in distribution of the random field with respect to $\mathbb{P}_X$ offer an efficient method to study the almost sure asymptotics of the nodal volumes. Let us mention that somehow philosophically related approaches, but with a strong arithmetic flavor, have been used in the setting of Laplace eigenfunctions on the torus via the so-called Bourgain de-randomization method, see \cite{buckley2016number,bourgain2014toral}. One difference between these ideas and our approach is that the genericity of the eigenfunctions is there mainly related to the generic arithmetic properties of the eigenvalues, whereas in our setting the genericity is exclusively related to the randomness of the coefficients.

\medskip

The study of the fluctuations in Salem--Zygmund CLT and in particular the fact the the limit depends or not on the particular common law of the coefficients $\{a_k,b_k\}_{k\ge 1}$ can be seen as a first step towards a better understanding of the universality phenomenon for trigonometric models, which is the object of a vast literature, both at a microscopic and a macroscopic scales, see e.g. \cite{flasche,iksanov2016,nousAMS,MR3846831,nguyen2017roots} and the references therein.
\par
\medskip
As we shall see below, the fluctuations in Salem--Zygmund CLT do depend on the distribution of  the coefficients, via their fourth moment. The non-universality of the fluctuations we establish here could thus be seen as a partial explanation to the surprising phenomenon that, for random trigonometric polynomials whose degree tends to infinity, the first order asymptotics -- i.e. the expectation -- of the number of zeros, is generally universal \cite{flasche, APP21}, whereas the second order asymptotics -- i.e. the variance -- is not, as observed in the references \cite{Bally2018,do2019random}. This situation is somehow typical of random trigonometric polynomials models as for other models of random polynomials such that the celebrated Kac polynomials, the fluctuations are indeed universal as they do not depend on the specific distributions of the coefficients $\{a_k,b_k\}_{k\ge 1}$.The reader can consult for example the references \cite{maslova1974variance,nguyen2019random} for more details.

\newpage
\subsection{Statement of the results}
In order to state precisely our mains results, let us briefly recall basic concepts around Hermite polynomials. For any integer $p\ge 0$, we shall denote by $H_p$ the $p$-th Hermite polynomial which is classically defined by 
\begin{equation*}
H_q(x)=(-1)^q e^{\frac{x^2}{2}} \frac{d^q}{dx^q}\left[e^{-\frac{x^2}{2}}\right].
\end{equation*}

It is well known that Hermite polynomials form a complete orthogonal system in $L^2\left(\gamma\right)$ so that any square integrable function $\phi$ can be expanded in the following way $\phi(x)=\sum_{k=0}^\infty c_k(\phi) H_k(x)$, where the sum converges in $L^2(\gamma)$ and where the $c_k(\phi)$ denote the so-called Hermite coefficients of $\phi$. As, $\gamma(H_k^2)=k!$, by orthogonality, one obtains in particular that 
\[
\gamma(\phi^2)=\int_{\mathbb{R}}\phi^2(x)d\gamma(x)=\sum_{k=0}^\infty c_k^2(\phi) k! .
\]
The first main result of the article describes the fluctuations in Salem--Zygmund CLT in the particular case where the coefficients are i.i.d. standard Gaussian variables. 
\begin{theo}\label{Main-Gaussian}
Suppose that the variables $(a_k)$ and $(b_k)$ are all independent with standard Gaussian distribution and that $\phi \in L^2(\gamma)$. Then as, $n$ goes to infinity, we have the convergence in distribution
\begin{equation}\label{Main-Gaussian-eq}
\sqrt{n}\left(\mathbb{E}_X\left[\phi\left(S_n(X)\right)\right]-\gamma(\phi) \right)\xrightarrow[n\to\infty]{\text{Law}}~\mathcal{N}(0,\sigma_\phi^2),
\end{equation}
where the variance of the limit Gaussian distribution is given by
\begin{equation}\label{variance-infinie}
\sigma_\phi^2:=\sum_{k={}{2}}^\infty c_k^2(\phi) {}{\frac{k!}{2\pi} }\int_{\mathbb{R}}\left(\frac{\sin(x)}{x}\right)^k dx<+\infty.
\end{equation}
\end{theo}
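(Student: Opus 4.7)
The plan is to use the Hermite expansion of $\phi$ and treat each resulting mode via Wiener chaos / Malliavin-Stein techniques. Since for every $x \in [0,2\pi]$ the variable $S_n(x)$ is a centered Gaussian with unit variance, the expansion $\phi = \sum_{k \geq 0} c_k(\phi) H_k$ in $L^2(\gamma)$ yields, at least after a truncation argument to justify termwise integration,
$$
\mathbb{E}_X[\phi(S_n(X))] - \gamma(\phi) \;=\; \sum_{k \geq 1} c_k(\phi)\, \mathbb{E}_X\bigl[H_k(S_n(X))\bigr].
$$
The $k=1$ term vanishes identically because $\int_0^{2\pi}\cos(jx)\,dx = \int_0^{2\pi}\sin(jx)\,dx = 0$ for every $j\ge 1$. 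Setting $F_n^{(k)} := \sqrt n\, \mathbb{E}_X[H_k(S_n(X))]$, the problem reduces to showing joint convergence of $(F_n^{(k)})_{k\ge 2}$ towards independent centered Gaussians with variances $\sigma_k^2 := \frac{k!}{2\pi}\int_{\mathbb{R}}(\sin s/s)^k\,ds$.

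For the variance computation, the key input is the classical identity $\mathbb{E}[H_k(U)H_j(V)] = \delta_{jk}\,k!\,\rho^k$ for standard jointly Gaussian $(U,V)$ of covariance $\rho$. With $K_n(u) := \tfrac{1}{n}\sum_{j=1}^n \cos(ju)$ so that $\mathbb{E}[S_n(x)S_n(y)] = K_n(x-y)$, Fubini gives
$$
\mathrm{Var}\bigl(F_n^{(k)}\bigr) \;=\; \frac{k!\,n}{2\pi}\int_{-\pi}^{\pi} K_n(u)^k\,du.
$$
The substitution $u = s/n$ exhibits the integrand as a Riemann approximation of $(\sin s/s)^k$, while the pointwise bound $|K_n(u)| \leq \min\bigl(1,\,C/(n|u|)\bigr)$ coming from the closed form of $K_n$ furnishes an integrable dominating envelope for $k \geq 2$. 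Dominated convergence then yields $\mathrm{Var}(F_n^{(k)}) \to \sigma_k^2$, and the same orthogonality identity shows that cross-covariances $\mathrm{Cov}(F_n^{(k)}, F_n^{(j)})$ vanish exactly for $j\ne k$.

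For the asymptotic normality of each $F_n^{(k)}$, I would invoke the Nualart-Peccati fourth moment theorem. Writing $S_n(x) = \langle \alpha_n(x), G\rangle$ for a unit vector $\alpha_n(x)\in\mathbb{R}^{2n}$ and $G$ a standard Gaussian vector, one has $H_k(S_n(x)) = I_k(\alpha_n(x)^{\otimes k})$ in terms of the $k$-th multiple Wiener-Itô integral, so $F_n^{(k)} = I_k(h_{n,k})$ with $h_{n,k} := \sqrt n\,\int_0^{2\pi} \alpha_n(x)^{\otimes k}\,\frac{dx}{2\pi}$ belongs to the fixed $k$-th Wiener chaos. The required contraction norms $\|h_{n,k}\otimes_p h_{n,k}\|^2$ for $p=1,\ldots,k-1$ reduce to multiple integrals involving products of the form $K_n(x_1-x_2)^p K_n(x_3-x_4)^p K_n(x_1-x_3)^{k-p}K_n(x_2-x_4)^{k-p}$ and variants thereof, and the same $u=s/n$ rescaling together with the envelope bound on $K_n$ should force these contractions to vanish as $n\to\infty$. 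I expect this contraction estimate to be the main technical step. Joint convergence of the finite family $(F_n^{(k)})_{k=2}^N$ to independent centered Gaussians then follows from the Peccati-Tudor theorem, since each component sits in a distinct chaos and all cross-covariances are identically zero.

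Finally, to pass from the truncated linear combination $\sum_{k=2}^N c_k(\phi)F_n^{(k)}$ to the full series, one uses the uniform control $|K_n(u)|\le 1$, which gives $n\int_{-\pi}^{\pi}|K_n(u)|^k\,du\le n\int_{-\pi}^{\pi}K_n(u)^2\,du = \pi$ for every $k\ge 2$, hence $\mathrm{Var}(F_n^{(k)})\le k!/2$ uniformly in $n$. Consequently
$$
\mathrm{Var}\Bigl(\sum_{k>N}c_k(\phi)F_n^{(k)}\Bigr) \;\le\; \tfrac{1}{2}\sum_{k>N}c_k(\phi)^2\,k!,
$$
a tail which tends to zero since $\|\phi\|_{L^2(\gamma)}^2 = \sum_k c_k(\phi)^2\,k! <\infty$. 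A standard Slutsky argument then transfers the CLT from the truncated sum to the full expansion and identifies the limit variance $\sigma_\phi^2 = \sum_{k\ge 2}c_k(\phi)^2\sigma_k^2$, which in particular is finite.
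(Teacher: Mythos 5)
Your proposal is correct and follows essentially the same route as the paper: Hermite/Wiener-chaos decomposition, a fourth-moment-type criterion on each fixed chaos, Peccati--Tudor for joint convergence, and the uniform bound $\mathrm{Var}(F_n^{(k)})\le C\,k!$ to handle the tail of the series. The only divergence is that you verify Gaussianity on the $k$-th chaos via the contraction norms $\|h_{n,k}\otimes_p h_{n,k}\|$ of Nualart--Peccati, whereas the paper controls $\mathrm{Var}\left(\Gamma[F_{n,q},F_{n,q}]\right)$ through the Poincar\'e inequality; these are equivalent sufficient conditions, and the one estimate you defer as ``the main technical step'' is, after bounding superfluous Dirichlet-kernel factors by $1$, exactly the quantity $n^2\,\mathbb{E}\left[\left|D_n(X_1-Y_1)D_n(X_2-Y_2)D_n(X_1-X_2)\right|\right]$, which the rescaling $u=s/n$ and the envelope $|D_n(u)|\le\min(1,C/(n|u|))$ that you cite do show to be $\mathrm{O}(\log^3 n/n)$, as the paper verifies in detail.
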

\begin{rem}\label{rem.first}
The above theorem shows that the exact rate of convergence in Salem--Zygmund CLT is of order $1/\sqrt{n}$, which was indeed conjectured in \cite{AP2021} when quantifying the convergence for some ad-hoc metric. A motivating and representative example is the one where $\phi(x)=x^2$ and in that case 
\[
\sqrt{n} \left( \mathbb{E}_X\left[S_n(X)^2\right]-1 \right)=\sqrt{n} \left( \frac{1}{n} \sum_{k=1}^n \frac{a_k^2+b_k^2}{2} -1 \right).
\]
In this particular example, the convergence thus reduces to the classical CLT for i.i.d. random variables and the Gaussian nature of the limit should not be surprising. 
Note that the $k-$th moments of the $\sin_c$ function appearing in the limit variance are well studied, and can be easily upper bounded. For example, following \cite{borwein2}, one has 
\[
\int_{\mathbb{R}}\left(\frac{\sin(x)}{x}\right)^2 dx={}{\pi}, \quad  \lim_{k \to+\infty} \sqrt{k} \int_{\mathbb{R}}\left|\frac{\sin(x)}{x}\right|^k dx=\sqrt{\frac{3\pi}{2}}.
\]
\end{rem}

\begin{rem}\label{rem.gauss}
The proof of Theorem \ref{Main-Gaussian} takes fully advantage of the Gaussian context, via the so-called Wiener chaos decomposition and the Malliavin--Stein approach. In particular, via Peccati--Tudor Theorem, one can restrict to a finite number of chaoses and the use of square field operator then allows to reduce the convergence in distribution to the convergence of a scalar sequence. 
\end{rem}

Let us now describe the second main result of the article, i.e. the fluctuations in the more general context of symmetric coefficients. 
We will work under the more restrictive condition on the test function  $\phi\in\mathbb{L}^2(\gamma)$
$$(\star)\quad \exists 0<\kappa<1, \;\phi(x)=\text{O}\left(e^{\kappa \frac{x^2}{2}}\right)\;\; \text{and} \;\; \exists A>0\;\;\text{s.t.}\;\;\sum_{k=0}^\infty |c_k(\phi)| k! A^k<\infty.$$

\begin{rem}\label{rem.star}
Note that condition $(\star)$ is in fact rather mild, in particular, it allows to consider test functions $\phi$ with arbitrary exponential growth rate. Indeed, we have the well known following Hermite decomposition of the exponential function, for $\alpha >0$
\[
e^{\alpha x} =  \sum_{k=0}^{\infty}c_k(\alpha) H_k(x), \quad \text{with} \quad  c_k(\alpha):=e^{\alpha^2/4}\frac{\alpha^k}{k!} ,
\]
so that, if $\alpha>0$ and for any $A>0$ such that $ A\alpha<1$, we have
\[
\sum_{k=0}^\infty |c_k(\alpha)| k! A^k = e^{\alpha^2/4} \sum_{k=0}^\infty \left( A \alpha\right)^k <+\infty.
\]
Since the above discussion can be extended to complex variables,  condition $(\star)$ in particular allows to consider characteristic functions.
Note moreover that the condition guarantees the finiteness of series of the following forms, which will naturally appear in the course of the proofs below, for $\beta, C, T$ arbitrary positive constants
\[
\sum_{k\geq 1} c_k(\phi)^2 k! k^{\beta}<+\infty, \quad \sum_{k\geq 1} \frac{c_k(\phi) k!}{\sqrt{(k-1)!}}<+\infty, \quad \sum_{k\geq 1} c_k(\phi) k! \frac{k^{\beta} C^k}{\lfloor k/T \rfloor !}<+\infty.
\]
\end{rem}
\noindent
The fluctuations for general symmetric coefficients are described by the next result. 
\begin{theo}\label{Main-pas-Gaussian}
Suppose that the variables $(a_k)$ and $(b_k)$ are all independent and identically distributed, their common law being symmetric, with unit variance and a finite moment of order six.  Suppose moreover that the test function $\phi$ satisfies condition ($\star$), then as $n$ goes to infinity, we have the convergence in distribution
\begin{equation}\label{Main-pas-Gaussian-eq}
\sqrt{n}\left(\mathbb{E}_X\left[\phi\left(S_n(X)\right)\right]-\gamma(\phi) \right)\xrightarrow[n\to\infty]{\text{Law}}~\mathcal{N}(0,\Sigma_\phi^2),
\end{equation}
where this time the variance of the limit Gaussian distribution is given by
\begin{equation}\label{variance-infinie-pas-Gaussien}
\Sigma_\phi^2:=\sigma_\phi^2+\frac{1}{{}{2}} c_2(\phi)^2 \left(\mathbb{E}_{\mathbb{P}}\left[a_1^4\right]-3\right).
\end{equation}
\end{theo}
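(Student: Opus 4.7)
The plan is to project $\phi$ onto the Hermite basis and analyse the resulting series term-by-term. Writing $\phi=\sum_{k\ge 0}c_k(\phi)H_k$ and using $\gamma(H_k)=\delta_{0k}$, one obtains the formal identity
\[
\sqrt{n}\bigl(\mathbb{E}_X[\phi(S_n(X))]-\gamma(\phi)\bigr)=\sum_{k\ge 1}c_k(\phi)\,F_{n,k},\qquad F_{n,k}:=\sqrt{n}\,\mathbb{E}_X[H_k(S_n(X))].
\]
Condition $(\star)$, together with a uniform-in-$n$ estimate of the form $\mathbb{E}_{\mathbb{P}}[F_{n,k}^2]\le C^k k^\beta$ (obtained from an explicit multinomial expansion of $H_k(S_n(X))$ and the finite sixth moment), will allow us to truncate at some finite order $K$ with an $L^2(\mathbb{P})$ remainder that vanishes as $K\to\infty$ uniformly in $n$. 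The problem thus reduces to identifying the joint limit in distribution of the finite vector $(F_{n,k})_{1\le k\le K}$.

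The crucial observation is that the $k=2$ component behaves differently from all the others. Using only trigonometric orthogonality, one obtains the exact identity
\[
F_{n,2}=\frac{1}{2\sqrt{n}}\sum_{i=1}^n\bigl(H_2(a_i)+H_2(b_i)\bigr),
\]
so that the classical one-dimensional central limit theorem immediately yields $F_{n,2}\xrightarrow{\text{Law}}\mathcal{N}\bigl(0,\tfrac{1}{2}(\mathbb{E}_{\mathbb{P}}[a_1^4]-1)\bigr)$. Comparing with the $k=2$ contribution $c_2(\phi)^2$ to $\sigma_\phi^2$ in Theorem~\ref{Main-Gaussian} (recovered when $\mathbb{E}_{\mathbb{P}}[a_1^4]=3$), this \emph{exactly} explains the non-universal correction $\tfrac{1}{2}c_2(\phi)^2(\mathbb{E}_{\mathbb{P}}[a_1^4]-3)$ in $\Sigma_\phi^2$. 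For $k\ge 3$, write $H_k(S_n(X))=M_{n,k}(X)+D_{n,k}(X)$, splitting the expansion of $H_k(S_n(X))$ in the monomial basis of $(a_i,b_i)$ into a purely multilinear part $M_{n,k}(X)$ (all indices pairwise distinct) and a diagonal remainder $D_{n,k}(X)$ gathering the terms with at least one repeated index. Newton--Girard type identities rewrite $D_{n,k}(X)$ as a linear combination, with trigonometric coefficients, of products of Newton power sums $p_j=\sum_i a_i^j$ and $q_j=\sum_i b_i^j$, and the finite sixth moment then yields $\mathbb{E}_{\mathbb{P}}[(\sqrt{n}\,\mathbb{E}_X[D_{n,k}(X)])^2]=o(1)$, \emph{precisely} because $k\ge 3$. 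What survives is $\sqrt{n}\,\mathbb{E}_X[M_{n,k}(X)]$, a homogeneous multilinear sum of degree $k$ in $\{a_i,b_i\}_{1\le i\le n}$ whose normalized kernel has entries of size $n^{-(k-1)/2}$. Each variable therefore has influence $O(1/n)$, which is low, and the invariance principle of Mossel--O'Donnell--Oleszkiewicz, together with its quantitative refinements by Nourdin--Peccati--Reinert, ensures that its limiting law coincides with the one obtained in the Gaussian-coefficient case of Theorem~\ref{Main-Gaussian}.

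It remains to glue the pieces into a joint CLT. Covariances are computed directly: the multilinear parts $\sqrt{n}\mathbb{E}_X[M_{n,k}(X)]$ for $k\ge 3$ have asymptotically vanishing covariance with the degree-two diagonal statistic $F_{n,2}$, because a homogeneous multilinear polynomial of degree $k\ge 3$ in centered symmetric variables is orthogonal to $\sum_i(a_i^2-1)$ up to $o(n^{-1/2})$ errors, and multilinear parts of different degrees are pairwise orthogonal. The limiting covariance matrix of $(F_{n,k})_k$ is therefore diagonal in the Hermite degree, and summing against $(c_k(\phi))_k$ reproduces the announced variance $\Sigma_\phi^2=\sigma_\phi^2+\tfrac{1}{2}c_2(\phi)^2(\mathbb{E}_{\mathbb{P}}[a_1^4]-3)$. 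The main obstacle, as I see it, is the uniform-in-$k$ control of the diagonal residues $D_{n,k}$: this is where the algebraic rewriting via Newton--Girard and the finite sixth moment are essential, and where the growth of $c_k(\phi)$ tolerated by condition $(\star)$ becomes tight. A secondary difficulty is to keep the invariance principle quantitative enough that its error degrades only mildly with $k$, so that it can be balanced against the Hermite truncation without losing the sharp $1/\sqrt{n}$ rate.
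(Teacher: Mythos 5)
Your proposal follows essentially the same route as the paper: Hermite expansion and truncation, the exact identity $F_{n,2}=\frac{1}{\sqrt n}\sum_{i}\bigl(\tfrac{a_i^2+b_i^2}{2}-1\bigr)$ handled by the classical CLT (which is indeed the sole source of the non-universal correction), a Newton--Girard splitting of $H_k(S_n(X))$ for $k\ge 3$ into a multilinear part plus diagonal residues, and the Mossel--O'Donnell--Oleszkiewicz/Nourdin--Peccati--Reinert invariance principle combined with the fact that a Gaussian limit for a homogeneous sum forces its maximal influence to vanish. All of this matches the paper's architecture, and your identification of the diagonal residues as the technically heavy part is accurate (the paper spends a full section on almost-sure Salem--Zygmund-type sup bounds, hypercontractivity and Borel--Cantelli to sum these residues against $p!\,c_p(\phi)$, and the borderline term $\sqrt n\,\mathbb{E}_X[(N_{n,2}(X)-1)e_{n,p-2}(X)]$ requires a separate exact-decorrelation $L^2$ computation rather than a sup-norm bound).

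There is, however, one genuine gap: the final ``gluing'' step. You argue that the degree-two statistic and the multilinear sums of degree $\ge 3$ are asymptotically (in fact exactly) uncorrelated, and conclude that the limiting covariance matrix is diagonal, hence the announced variance. But vanishing covariances do not yield \emph{joint} asymptotic Gaussianity; each component converging to a Gaussian plus pairwise orthogonality is not a proof that the vector converges to a Gaussian vector with independent entries. This is precisely the point where the paper cannot invoke the multidimensional invariance principle of Nourdin--Peccati--Reinert off the shelf, because the inputs of the degree-two sum, namely $\{(a_k^2+b_k^2)/2-1\}$, are not independent of the inputs $\{a_k,b_k\}$ of the multilinear sums. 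The paper resolves this with a dedicated Lindeberg replacement argument (Proposition \ref{prop.joint}), swapping the pairs $(a_k,b_k)$ and the auxiliary degree-two summands one index at a time and checking that the first- and second-order Taylor terms match thanks to symmetry and the normalization of the variances, with third-order errors controlled by influences and the sixth moment. Your proof needs an argument of this kind; without it the convergence of the full sum \eqref{eq.key22} to a single Gaussian with variance $\Sigma_\phi^2$ is not established. (A minor further remark: your claimed uniform bound $\mathbb{E}_{\mathbb{P}}[F_{n,k}^2]\le C^k k^\beta$ is not of the right order --- the correct bound is of order $k\cdot k!$ --- but this does not affect the truncation since $\phi\in L^2(\gamma)$ together with $(\star)$ gives $\sum_k c_k(\phi)^2\,k!\,k<\infty$.)
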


\begin{rem}
Of course, the main novelty of this last result when compared to the first Theorem \ref{Main-Gaussian} is that the limit variance does not coincide with the one obtained in the Gaussian case. Therefore, the last Theorem \ref{Main-pas-Gaussian} is a typical example of a non-universality statement. Quite surprisingly, the correction term is simply given by the product of the kurtosis of the common distribution of the coefficients factorized by the second Hermite coefficient of the test function. As a result, if the latter vanishes -- it occurs e.g. if $\phi$ is an odd function-- then the limit variance is indeed universal. Going back to the simple example $\phi(x)=x^2$ mentioned in Remark \ref{rem.first} above,  the classical CLT applied to the i.i.d. variables $(a_k^2+b_k^2)/2 -1$ yields for example 
\[
\sqrt{n} \left( \mathbb{E}_X\left[S_n(X)^2\right]-1 \right)=\sqrt{n} \left( \frac{1}{n} \sum_{k=1}^n \frac{a_k^2+b_k^2}{2} -1 \right) \xrightarrow[n\to\infty]{\text{Law}}~\mathcal{N}\left(0,\frac{1}{2} \left( \mathbb E[a_1^4]-1\right)\right).
\]
Note that in the case of Rademacher coefficients, i.e. if $a_k,b_k$ take values in $\pm 1$, then one has $\sqrt{n} \left( \mathbb{E}_X\left[S_n(X)^2\right]-1 \right) \equiv 0$, so that the non-universality is then obvious.
\end{rem}

\begin{rem}
As already mentioned, this non-universality result can be thought as a first partial explanation to the surprising phenomenon that, when looking at the large degree asymptotics of the number of zeros of random trigonometric polynomials, the expected number of zeros is universal,  whereas the variance is not. Thanks to Kac formula, the number of zeros of the function $S_n$ can be expressed as an integral functional of $(S_n(X), S_n'(X))$ under $\mathbb P_X$, see \cite{nousAOP,AP2021}. The above Theorem \ref{Main-pas-Gaussian} shows that, dealing with more simple observables of the type $\mathbb E_X[\phi(S_n(X))]$, their fluctuations are already non-universal. The extension of Theorem \ref{Main-pas-Gaussian} to the bi variate framework of the function and its derivative is the object on ongoing work by the authors.
\end{rem}

The detailed proofs of Theorems  \ref{Main-Gaussian} and \ref{Main-pas-Gaussian} are the object of the next Sections \ref{sec.gauss} and \ref{sec.pasgauss} respectively. To facilitate the reading of the paper, the proofs of some of the technical estimates are postponed in the last Section \ref{sec.tech}. 

\subsection{Ingredients and heuristics of the proofs}\label{sec.heuristic}
Before giving the detailed proofs of our main results in the next sections, let us give here the heuristics of the proofs and describe the mathematical ingredients involved. We deliberately place ourselves in a slightly more general context since the method we develop here is not restricted to trigonometric sums, but would apply to more general sums of random variables.

\medskip

Suppose that we are given a sequence (or triangular array) of random variables $\{U_{i,n}\}_{1\le i \le n}$ defined on some probability space $(\Omega_1,\mathcal{F}_1,\mathbb{P}_1)$,  as well as another sequence of random variables $\{a_i\}_{i\ge 1}$ that are centered with variance and defined on another probability space $(\Omega_2,\mathcal{F}_2,\mathbb{P}_2)$. We also consider a test function $\phi:\mathbb{R}\to\mathbb{R}$ with enough integrability and which is centered with respect to the Gaussian distribution. We assume the following condition of normalization:

$$(C.N)~~\forall n\ge 1, \forall \omega_1 \in \Omega_1,\,\mathbb{E}_{\mathbb{P}_2}\left[\left(\sum_{i=1}^n U_{i,n}(\omega_1) a_i\right)^2\right]=\sum_{i=1}^n U_{i,n}(\omega_1)^2=1.$$

We are interested in proving a central limit Theorem for a quantity of the form $r_n~\mathbb{E}_{\mathbb{P}_1}\left[ \phi\left(\sum_{i=1}^n a_i U_{i,n}\right)\right]$, where $r_n$ is a suitable deterministic sequence. In the setting of this article, we have for instance $U_{i,n}=\cos(i X)/\sqrt{n}$, if we omit the sinus contribution. 

\medskip

\underline{Step 1:}~The first step is natural and consists of exploring this question in the somewhat simpler case of Gaussian coefficients, that is to say when the sequence $\{a_k\}_{k\ge 1}$ is a sequence of i.i.d. standard Gaussian variables. In a such case, the machinery of Wiener chaos is a perfectly adapted tool as one can expand the functional $\phi$ into  the orthogonal basis of Hermite polynomials:
\[
\phi(x)=\sum_{k=1}^\infty c_k(\phi) H_k(x),
\]
and one is left to establish a CLT for the Wiener expansion
\[
(W.E)\qquad r_n \,\mathbb{E}_{\mathbb{P}_1}\left[ \phi\left(\sum_{i=1}^n a_i U_{i,n}\right)\right]=\sum_{k=1}^\infty c_k(\phi) r_n\, \mathbb{E}_{\mathbb{P}_1}\left[ H_k\left(\sum_{i=1}^n a_i U_{i,n}\right)\right].
\]

Thanks to the normalization condition $(C.N)$, the quantity $r_n\, \mathbb{E}_{\mathbb{P}_1}\left[ H_k\left(\sum_{i=1}^n a_i U_{i,n}\right)\right]$ belongs to the Wiener chaos of order $k$ and the previous equality is the so called Wiener chaotic expansion of $r_n \, \mathbb{E}_{\mathbb{P}_1}\left[ \phi\left(\sum_{i=1}^n a_i U_{i,n}\right)\right]$ which theoretically exists under an $L^2$ condition. Indeed, the Wiener chaotic expansion simply consists in the orthonormal basis of polynomial for the Gaussian distribution on $\mathbb{R}^d$ if the dimension is finite or even in $\mathbb{R}^{\mathbb{N}}$ for an infinite dimensional setting. That being said, Wiener chaos display remarkable properties with respect to Gaussian convergence: (i) a sequence of random variables lying in a Wiener chaos of fixed order $k\ge1$ has a Gaussian limit in distribution if and only if its fourth cumulant tends to zero which is pretty easy to check in practice and (ii) the separate convergence in distribution can be upgraded to joint convergence. One can consult the dedicated articles \cite{peccati2005gaussian}  and \cite{nualart2005central} for a detailed overview.

\medskip
\underline{Step 2:}~Once the Gaussian case is resolved, one may still writes the expansion  (W.E) formally in the case where the $\{a_k\}_{k\ge 1}$ are not Gaussian anymore. The problem is that, one loses the rich structure of Wiener chaoses provided by the aforementioned properties (i) and (ii) and even the fact that the chaotic components are orthogonal. To overcome these issues, one needs to somehow ``simplify'' the expressions $r_n \,\mathbb{E}_{\mathbb{P}_1}\left[ H_k\left(\sum_{i=1}^n a_i U_{i,n}\right)\right]$ which are polynomial quantities of the sequence $(a_1,\cdots,a_n)$, in such way that
\[
(S)\qquad r_n\,  \mathbb{E}_{\mathbb{P}_1}\left[ {}{H_k}\left(\sum_{i=1}^n a_i U_{i,n}\right)\right]=Q_{k,n}(a_1,\cdots,a_n)+\mathcal{R}_{k,n},
\]
where $Q_{k,n}:\mathbb{R}^n\to\mathbb{R}$ is a {\bf homogeneous polynomial} of degree $k$ with no diagonal terms and $\mathcal{R}_{k,n}$ is a remainder which tends to zero as $n\to\infty$. This procedure is purely algebraic and as detailed in Section \ref{sec.combi} below, it can be carried out through the so called Newton--Girard formula which, given a set of numbers $\{x_1,\cdots,x_n\}$, relates in a polynomial way the symmetric expressions $e_{n,l}=\sum_{i_1<i_2<\cdots<i_l} x_{i_1}x_{i_2}\cdots x_{i_l}$ with the Newton sums $N_{n,l}=\sum_{i=1}^n x_i^l$. Concretely one gets
\[
e_{n,p}= (-1)^p \sum_{\substack{ m_1+2 m_2+\ldots+p m_p = p \\ m_1, \ldots, m_p \geq 0}} \prod_{j=1}^p\frac{\left (-N_{n,j}\right)^{m_j}}{m_j ! j^{m_j}}.
\]
Applying this to the case where $x_i=U_{i,n} a_i$ and assuming the natural condition that $\max_{1\le i\le n} |U_{i,n} a_i|=o(1)$ in some probabilistic sense, which roughly expresses the fact that no variable $U_{i,n}$ dominates, we get that $|N_{n,l}|\le \max_i|U_{i,n}| |a_i|^{l-2} \sum_{i} U_{i,n}^2 a_i^2$ which may be neglected for $l\ge 3$ in some probabilistic sense. Then, the Newton--Girard formula considerably simplifies at the first order as one gets
\[
e_{n,p}=(-1)^p \sum_{m_1+2m_2=p} \frac{(-N_{n,1})^{m_{1}} (-N_{n,2})^{m_2}}{m_1!m_2! 2^{m_2}}+\text{Remainder}.
\]
Noticing that, on the one hand $H_p(x)=\sum_{m_1+2m_2=p} (-1)^{m_1+m_2}\frac{x^m_{1} }{m_1!m_2! 2^{m_2}}$, and on the other hand that $N_{n,2}\approx 1$ which is ensured by the normalizing condition $(C.N)$ and the law of large numbers, one thus obtains the following simplified version of Equation $(S)$
\[
(S') \qquad r_n \, \mathbb{E}_{\mathbb{P}_1}\left[H_k\left(\sum_{i=1}^n U_{i,n} a_i\right)\right]\approx r_n\, k! \sum_{1\le i_1<i_2\cdots<i_k\le n} a_{i_1}\cdots a_{i_k}\mathbb{E}_{\mathbb{P}_1}\left[U_{i_1,n}\cdots U_{i_k,n}\right].
\]

\medskip
\underline{Step 3:}~Once these first two steps are achieved, the conclusion follows from the combination of two results borrowed to the theory of noise sensitivity. First of all, given an homogeneous sum of degree $k$, say 
\[
Q_k(x_1,\cdots,x_n)=\sum_{i_1<\cdots<i_k} f(i_1,\cdots,i_k) x_{i_1}\cdots x_{i_n},
\] 
one can associate its maximal influence function: 
\[
\tau:=\tau_n=\max_{1\le k \le n} \sum_{i_2<\cdots<i_d} f(k,i_2,\cdots,i_k)^2.
\] 
If the maximal influence $\tau$ is small (in a suitable quantified manner), then as established in \cite{mossel2010noise}, under suitable renormalization conditions, one can make the following asymptotic identification in law, where $g_1, \ldots, g_n$ are i.i.d Gaussian variables 
\[
Q_k(a_1,\cdots,a_n)\stackrel{\text{Law}}{\approx} Q_k(g_1,\cdots,g_n).
\] 
Keeping this in mind and coming back to our problematic, it is then sufficient to prove that all the maximal influence functions of the homogeneous sums 
\[
r_n\sum_{1\le i_1<i_2\cdots<i_k\le n} a_{i_1}\cdots a_{i_k}\mathbb{E}_{\mathbb{P}_1}\left[U_{i_1,n}\cdots U_{i_k,n}\right]
\] 
tend to zero as $n$ goes to infinity. It is indeed the case, using the observation that, as shown in \cite[Equation (1.10)]{nourdin2010invariance}, if an homogeneous sum evaluated in Gaussian standard random variables has a Gaussian limit (as establish in Step 1), then necessarily, its maximal influence function tends to zero. As a conclusion,
we can infer that the asymptotic behavior of the left hand side of Equation $(S')$ coincides with the one where we replace the non-Gaussian coefficients $\{a_k\}_{k\ge 1}$ by Gaussian ones.  The conclusion then follows from Step 1, where we established the desired CLT in the Gaussian framework. 

\section{Fluctuations in the Gaussian case} \label{sec.gauss}

The object of this section is to give the detailed proof of Theorem \ref{Main-Gaussian} describing the fluctuations in Salem--Zygmund CLT in the case of i.i.d. standard Gaussian coefficients. We will first treat the case where the base function $\phi$ is an Hermite polynomial and then deduce the general case where $\phi \in L^2(\gamma)$.

\subsection{Dealing with projections on Wiener chaoses} In this section, we shall first and specifically deal with the case where $\phi(x)=H_q(x)$. Note that in this latter case, 
we have $\int_{\mathbb{R}}H_q(x)d\gamma(x)=0$ for all $q \geq 1$ so that we shall in fact establish a central limit Theorem for the sequence of random variables $\left(\sqrt{n}~\mathbb{E}_X\left[H_q\left(S_n(X)\right)\right]\right)_{n \geq 1}.$ 
To simplify the expressions in the sequel, let us introduce the following notation, for all integers $n \geq 1$ and $q \geq 1$ 
\begin{equation}
F_{n,q} := \sqrt{n}~\mathbb{E}_X\left[H_q\left(S_n(X)\right)\right].
\end{equation}
Note that $F_{n,1}=\sqrt{n}~\mathbb{E}_X\left[H_1\left(S_n(X)\right)\right]=\sqrt{n}~\mathbb{E}_X\left[S_n(X)\right]=0$ so that the interesting cases naturally correspond to indices $q\ge 2$. The object of the section is to establish the following result.

\begin{theo}\label{Gaussian-hermite-clt}
Under the probability $\mathbb P$, for any $q\ge 2$, we have the following convergence in distribution as $n$ goes to infinity
\begin{equation}\label{Gaussian-hermite-clt-eq}
F_{n,q} \xrightarrow[n\to\infty]{\text{Law}}\mathcal{N}(0,\sigma_q^2)\quad \text{where}\quad {}{\sigma_q^2:=\frac{q!}{2\pi} \int_{\mathbb{R}}\left(\frac{\sin(x)}{x}\right)^q dx}.
\end{equation}
\end{theo}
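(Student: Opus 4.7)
The strategy is to fully exploit the Gaussian Wiener chaos structure. First, identify the coefficients $a_k, b_k$ as $W(e_k), W(\tilde e_k)$ for an orthonormal basis of the isonormal Hilbert space $\mathfrak{H}=\ell^2$. For every $x\in[0,2\pi]$, the vector
\[
\alpha_n(x) \;:=\; \tfrac{1}{\sqrt n}\sum_{k=1}^n \bigl(\cos(kx)\,e_k + \sin(kx)\,\tilde e_k\bigr)
\]
has unit norm in $\mathfrak H$ and satisfies $S_n(x)=W(\alpha_n(x))$. The Hermite identity $H_q(W(h))=I_q(h^{\otimes q})$ for $\|h\|=1$ together with the (Fubini-type) interchange of $I_q$ and $\mathbb E_X$ then yields the Wiener chaos representation
\[
F_{n,q}\;=\;I_q(f_n), \qquad f_n\;:=\;\sqrt n\,\mathbb E_X\bigl[\alpha_n(X)^{\otimes q}\bigr]\;\in\;\mathfrak H^{\otimes q}.
\]
Since $F_{n,q}$ lies in a fixed chaos, I would invoke the Nualart--Peccati fourth moment theorem: convergence to $\mathcal N(0,\sigma_q^2)$ is equivalent to (i) $\mathbb E[F_{n,q}^2]\to \sigma_q^2$ and (ii) vanishing of every contraction norm $\|f_n\otimes_r f_n\|$ for $r=1,\dots,q-1$. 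This is exactly the scalar reduction announced in Remark~\ref{rem.gauss}.

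For (i), the It\^o isometry combined with $\langle\alpha_n(x),\alpha_n(y)\rangle = \frac{1}{n}\sum_{k=1}^n\cos(k(x-y)) =: K_n(x-y)$ (a rescaled Dirichlet cosine kernel) yields
\[
\mathbb E[F_{n,q}^2]\;=\;q!\,\|f_n\|^2\;=\;\frac{q!\,n}{2\pi}\int_0^{2\pi} K_n(u)^q\,du,
\]
after reducing from a double to a single integral in $u=x-y$. Using the closed form $\sum_{k=1}^n\cos(ku) = \sin(nu/2)\cos((n+1)u/2)/\sin(u/2)$, the pointwise scaling limit $n K_n(v/n)\to \sin(v)/v$, and a dominated convergence argument on $[-\pi n,\pi n]$ (valid because for $q\ge 2$ the integrand is bounded by an integrable majorant of the form $(|v|/\pi)^{-q}$ away from the origin), one obtains $\mathbb E[F_{n,q}^2]\to \sigma_q^2 = \frac{q!}{2\pi}\int_{\mathbb R}(\sin v/v)^q\,dv$.

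For (ii) — which I expect to be the main technical obstacle — a direct expansion of the contraction gives
\[
\|f_n\otimes_r f_n\|^2\;=\;\frac{n^2}{(2\pi)^4}\int_{[0,2\pi]^4} K_n(x_1\!-\!x_2)^{r}\,K_n(x_3\!-\!x_4)^{r}\,K_n(x_1\!-\!x_3)^{q-r}\,K_n(x_2\!-\!x_4)^{q-r}\prod_i dx_i,
\]
namely a fourfold integral of four Dirichlet kernels arranged in a \emph{cyclic} pattern. The plan is to change variables $v_i := n(x_i-x_{i+1})$ (indices cyclic), note that the substitution is constrained by $v_1+v_2+v_3+v_4\equiv 0$, and use this over-determination together with the sharp localization of $nK_n$ on scale $1$ to harvest one additional factor of $1/n$ whenever $1\le r\le q-1$ (the balance $r=0$ or $r=q$ corresponds exactly to the variance and would give $O(1)$). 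Modulo this careful scaling/dimension-counting argument — which exploits the fact that $(\sin v/v)^{q-r}$ and $(\sin v/v)^r$ are both integrable for the relevant exponents — the contractions vanish and the fourth moment theorem delivers the announced convergence in distribution.
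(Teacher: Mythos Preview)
Your approach and the paper's are essentially the same --- both place $F_{n,q}$ in the $q$-th Wiener chaos and invoke the Nualart--Peccati criterion --- but you use the contraction-norm formulation while the paper uses the equivalent carr\'e du champ formulation. Your variance computation~(i) is correct and coincides with the paper's Lemma~\ref{variance-Fn}.

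There is, however, a genuine gap in your sketch of~(ii). The four-fold contraction integral carries Dirichlet factors with exponents $r$ and $q-r$, and you propose to dominate by the limits $(\sin v/v)^r$ and $(\sin v/v)^{q-r}$. This fails precisely when $r=1$ or $q-r=1$: the function $\sin(v)/v$ is \emph{not} absolutely integrable on $\mathbb R$, so no dominated-convergence argument of the type you describe can close the estimate in those cases (and they always occur, e.g.\ $q=2$, $r=1$). The repair is to abandon the scaling-limit domination, take absolute values, bound one of the four $|K_n|$ factors trivially by~$1$, and use the $L^1$ Dirichlet estimate $\int_0^{2\pi}|K_n(u)|\,du = O(\log n / n)$ on the remaining three; this yields $n^2\,(\log n/n)^3 = O((\log n)^3/n)$.

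The paper reaches exactly this bound but by a cleaner route that avoids the cyclic integral and the exponent issue altogether. Rather than computing $\|f_n\otimes_r f_n\|^2$, it controls $\mathrm{Var}\bigl(\Gamma[F_{n,q},F_{n,q}]\bigr)$ via the Poincar\'e inequality $\mathrm{Var}(Z)\le \mathbb E\bigl[\Gamma[Z,Z]\bigr]$. The chain rule for $\Gamma$ (plus H\"older to remove the Hermite factors, which are deterministic in law once $x$ is frozen) produces a four-variable expectation with only \emph{three} Dirichlet factors, all to the first power, arranged in a tree pattern $|D_n(X_1-Y_1)\,D_n(X_2-Y_2)\,D_n(X_1-X_2)|$ rather than your four-factor cycle. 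This tree integrates sequentially against the $L^1$ bound with no case distinction on $r$.
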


\medskip
As already mentioned in the heuristic description given in Section  \ref{sec.heuristic},
the proof relies on the so-called \textit{Malliavin--Stein} approach. Let us explain briefly below the heart of this method in the finite dimensional setting. Note that, the quantities we manipulate here leave by essence in a finite dimensional Wiener space since for each $n\ge 1$, $S_n(X)$ involves only a finite number of Gaussian coefficients. Let us take $n\ge 1$ and let us place ourselves in the probability space $(\mathbb{R}^n,\mathcal{B}(\mathbb{R}^n),\gamma_n)$ with $\gamma_n$ the $n$-dimensional standard Gaussian distribution on $\mathbb{R}^n$. First, if $L:=\Delta-\vec{x}\cdot\nabla$ is the well known \textit{Ornstein--Ulhenbeck} operator that is symmetric with respect to $\gamma_n$, we have standard decomposition
\begin{eqnarray*}
L^2(\gamma_n)&=&\bigoplus_{k=0}^\infty \textbf{Ker}\left(L+k\textbf{I}\right),\\
\textbf{Ker}\left(L+k\textbf{I}\right)&=&\text{Vect}\left(\prod_{i=0}^n H_{k_i}(x_i)\Big{|}\sum_{i=0}^{n}k_i=n\right):=\hspace{-0.6 cm}\underbrace{\mathcal{W}_k.}_{k\text{-th Wiener chaos}}
\end{eqnarray*}
Then, introducing $\Gamma\left[\cdot,\cdot\right]=\nabla\cdot\nabla$ the square field or ``carr\'e du champ'' operator associated with $L$, for any $F\in\text{Ker}\left(L+k\text{I}\right)$ such that $\mathbb{E}\left[F^2\right]=1$ and for some constant $C_k$ only depending on $k$, the total variation distance between the variable $F$ and a standard Gaussian can be upper bounded by
\[
d_{TV}\left(F,\mathcal{N}(0,1)\right)\le C_k \sqrt{\text{Var}\left(\Gamma\left[F,F\right]\right)}.
\]

One might consult e.g. \cite{nourdin2009stein} or else Lemma \ref{lem.nol} below for a non quantitative analogous statement. In fact, in order to bound $\sqrt{\text{Var}\left(\Gamma\left[F,F\right]\right)}$ and establish the asymptotic Gaussianity, it is equivalent to show that $\E\left[F^4\right]-3\approx 0$, but we shall use instead the Poincar\'{e} inequality which ensures that $\text{Var}(Z)\le \mathbb{E}\left[\Gamma[Z,Z]\right]$ for any variable $Z$ in the domain $\mathbb{D}$ of $\Gamma\left[\cdot,\cdot\right]$. To facilitate the reading of this section, let us recall that operator $\Gamma\left[\cdot,\cdot\right]$ is bilinear, symmetric and obeys the chain rule.

\bigskip

In order to apply in our context the aforementioned Malliavin--Stein method, without any loss of generality we may consider that our coefficients $\{a_k,b_k\}_{1\le k \le n}$ are the coordinate mappings of the $2n$-dimensional probability space $\left(\mathbb{R}^{2n},\mathcal{B}\left(\mathbb{R}^{2n}\right),\gamma_{2n}\right)$. Next, we must observe that, for any $n\ge 1$,  for every fixed value of the random variable $X$ in $[0, 2\pi]$ and under $\mathbb{P}$, the random variable $S_n(X)$ is  a standard Gaussian random variable which belongs to the first Wiener chaos induced by the coefficients. Hence, $H_q\left(S_n(X)\right)$ and $\mathbb{E}_X\left[ H_q\left(S_n(X)\right)\right]$ belong to the $q$-th Wiener chaos as established in the next lemma.

\begin{lem}\label{lem.chaos}
For every integer $n\ge 1$, we have $\mathbb{E}_X\left[H_q\left(S_n(X)\right)\right]\in\mathcal{W}_q$.
\end{lem}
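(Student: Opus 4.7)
The plan is to reduce the statement to a purely algebraic identity and exploit two facts: for each fixed angle $x$, the linear combination $S_n(x)$ lies in the first Wiener chaos with unit variance, and the averaging $\mathbb{E}_X$ preserves chaos membership because it acts only on deterministic coefficients.

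First I would fix $x \in [0,2\pi]$ and record the linear decomposition
\[
S_n(x) = \sum_{k=1}^n c_k(x)\, a_k + \sum_{k=1}^n d_k(x)\, b_k, \qquad c_k(x) := \frac{\cos(kx)}{\sqrt{n}}, \quad d_k(x) := \frac{\sin(kx)}{\sqrt{n}},
\]
whose coefficient vector has squared Euclidean norm $\frac{1}{n}\sum_{k=1}^n(\cos^2(kx)+\sin^2(kx)) = 1$. I would then invoke the Hermite generating function identity $\exp(tZ - t^2/2) = \sum_{q\ge 0} t^q H_q(Z)/q!$ applied both to $Z = S_n(x)$ and, factor by factor, to each $a_k$ and $b_k$. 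Since the coefficients $c_k(x), d_k(x)$ are deterministic and $\sum_k(c_k(x)^2+d_k(x)^2)=1$ the exponential on the $S_n(x)$ side factorizes exactly, and matching the coefficients of $t^q$ yields the explicit expansion
\[
H_q(S_n(x)) = \sum_{\substack{\alpha,\beta\in \mathbb{N}^n \\ |\alpha|+|\beta|=q}} \frac{q!}{\alpha!\,\beta!}\, \prod_{k=1}^n c_k(x)^{\alpha_k} d_k(x)^{\beta_k} \prod_{k=1}^n H_{\alpha_k}(a_k)\, H_{\beta_k}(b_k).
\]
This exhibits $H_q(S_n(x))$ as a finite linear combination of products of Hermite polynomials in the base Gaussian variables with total degree exactly $q$, hence as an element of $\mathcal{W}_q$.

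To conclude, I would integrate this identity against $dx/(2\pi)$ and pull the integral inside the (finite) sum to act only on the deterministic factors $\prod_k c_k(x)^{\alpha_k} d_k(x)^{\beta_k}$, which is justified simply because the outer sum has finitely many terms. This gives
\[
\mathbb{E}_X\bigl[H_q(S_n(X))\bigr] = \sum_{|\alpha|+|\beta|=q} \frac{q!}{\alpha!\,\beta!} \left(\frac{1}{2\pi}\int_0^{2\pi} \prod_{k=1}^n c_k(x)^{\alpha_k} d_k(x)^{\beta_k}\, dx\right) \prod_{k=1}^n H_{\alpha_k}(a_k)\, H_{\beta_k}(b_k),
\]
which is a finite linear combination of Hermite products of total degree $q$ with deterministic scalar coefficients, and therefore lies in $\mathcal{W}_q$ as defined in the excerpt. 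There is no real obstacle: the whole argument is an instance of the classical fact that $H_q$ applied to a normalized element of the first chaos belongs to the $q$-th chaos, with the $\mathbb{P}_X$-averaging harmlessly absorbed into the deterministic prefactors.
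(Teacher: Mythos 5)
Your proof is correct, but it takes a different route from the paper's on the key step of passing the $\mathbb{P}_X$-average through the chaos structure. The paper argues in two moves: for each fixed $x$ the variable $S_n(x)$ is a normalized element of the first chaos, so $H_q(S_n(x))\in\mathcal{W}_q$ by the standard characterization; then $\mathbb{E}_X[H_q(S_n(X))]$ is obtained as a limit of Riemann sums $\frac1p\sum_{k}H_q(S_n(2k\pi/p))$, each of which lies in $\mathcal{W}_q$, and one concludes by closedness of the chaos under the relevant convergence. You instead make the first move explicit via the generating-function identity $e^{tZ-t^2/2}=\sum_q t^qH_q(Z)/q!$, which (using $\sum_k(c_k(x)^2+d_k(x)^2)=1$) factorizes over the independent Gaussians and yields the exact finite expansion of $H_q(S_n(x))$ into degree-$q$ Hermite products; the $x$-integration then only touches the deterministic scalar prefactors of a \emph{finite} sum, so no limiting argument, closedness property, or topology on $\mathcal{W}_q$ is needed at all. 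What your approach buys is a purely algebraic, self-contained proof that sidesteps the paper's slightly delicate appeal to $\mathcal{W}_q$ being ``closed under almost sure convergence'' (which strictly speaking requires either an $L^2$-convergence argument for the Riemann sums or the equivalence of topologies on a fixed chaos); what the paper's version buys is brevity, given that the standard chaos machinery is taken for granted. Both are valid; yours is arguably the more robust write-up of this lemma.
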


\begin{proof}[Proof of Lemma \ref{lem.chaos}]
As already stated above, for any fixed $x\in\mathbb{R}$ and under $\mathbb P$, we can observe that $S_n(x)=\frac{1}{\sqrt{n}}\sum_{k=1}^n a_k\cos(k x)+b_k\sin(k x)$ is a random variable with Gaussian distribution, which is centered and with variance
\begin{eqnarray*}
\mathbb{E}\left[S_n(x)^2\right]&=&\frac{1}{n}\sum_{k=1}^{n} \cos^2(kx)+\sin^2(kx)=1.
\end{eqnarray*}
Then, by definition of Wiener chaos, for every $q\ge 1$ it holds that $H_q(S_n(x))\in\mathcal{W}_q$. Let us recall that $\mathcal{W}_q$ is a closed subspace of $L^2(\mathbb{P})$ under the almost sure convergence. Thus, using for instance Riemann approximation we may write:

\begin{eqnarray*}
\mathbb{E}_X\left[H_q\left(S_n(X)\right)\right]&=\displaystyle{\frac{1}{2\pi}\int_0^{2\pi} H_q\left(S_n(x)\right)dx} =&\lim_{p\to\infty} \frac{{}{1}}{p}\sum_{k=0}^{p-1} H_q\left(S_n\left(\frac{2 k \pi}{p}\right)\right)\in \mathcal{W}_q.
\end{eqnarray*}
\end{proof}

\medskip

Given that, thanks to Lemma \ref{lem.chaos}, $(F_{n,q})_{n\ge 1}$ belongs to the $q$-th Wiener chaos, in order to prove that $F_{n,q}$ converges in distribution towards $\mathcal{N}(0,\sigma_q^2)$, it is necessary and sufficient to establish that $\Gamma\left[F_{n,q},F_{n,q}\right]$ converges towards the corresponding constant $q \sigma_q^2$, where $\Gamma$ is the aforementioned square field operator. The aforementioned convergence can be established either in probability or in $L^2(\mathbb{P})$ since these two topologies coincide on a finite sum of Wiener chaoses. More concretely we shall rely on the following lemma whose proof may be found for instance in \cite{nualart2005central}. We refer to \cite{nourdin2009stein} for stronger quantitative versions via Stein's method.

\begin{lem}\label{lem.nol}
Let $(X_n)_{n \geq 1}$ be a sequence of random variables on $(\Omega, \mathcal F, \mathbb P) $ such that that there exists $q \geq 1$ with $X_n\in\mathcal{W}_q$ for all $n \geq 1$ and such that, as $n$ goes infinity  $i)$ $\mathbb{E}\left[X_n^2\right]\to\sigma^2$ and $ii)$ $\text{Var}\left[\Gamma[X_n,X_n]\right]\to 0$. Then, under $\mathbb P$ and as $n$ goes to infinity, one has the following convergence in distribution $X_n\to \mathcal{N}(0,\sigma^2)$.
\end{lem}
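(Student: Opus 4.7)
My plan is to establish convergence of characteristic functions via a Malliavin integration by parts identity tailored to elements of $\mathcal{W}_q$. The crucial algebraic observation is that for $F \in \mathcal{W}_q$ the Ornstein--Uhlenbeck generator satisfies $LF = -qF$, so $-L^{-1}F = F/q$. Combined with the general identity $\mathbb{E}[F\,\psi(F)] = \mathbb{E}\bigl[\psi'(F)\,\Gamma[F,-L^{-1}F]\bigr]$ (valid for centered $F$ in the domain of $\Gamma$ and smooth $\psi$, and obtained by integrating by parts against $L$ and then applying the chain rule for $\Gamma$), this yields the pivotal relation
\[
\mathbb{E}[F\,\psi(F)] \;=\; \frac{1}{q}\,\mathbb{E}\bigl[\psi'(F)\,\Gamma[F,F]\bigr].
\]

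Applied with $\psi(x)=e^{itx}$ and $F=X_n$, setting $\varphi_n(t):=\mathbb{E}[e^{itX_n}]$ this becomes
\[
\varphi_n'(t) \;=\; i\,\mathbb{E}[X_n e^{itX_n}] \;=\; -\frac{t}{q}\,\mathbb{E}\bigl[\Gamma[X_n,X_n]\,e^{itX_n}\bigr].
\]
The next step is to show that $\Gamma[X_n,X_n]$ concentrates on the constant $q\sigma^2$. The chaos identity $\mathbb{E}[\Gamma[F,F]]=q\,\mathbb{E}[F^2]$ for $F\in \mathcal{W}_q$ together with assumption $(i)$ gives $\mathbb{E}[\Gamma[X_n,X_n]]\to q\sigma^2$; combined with $(ii)$ and Cauchy--Schwarz, this yields
\[
\bigl|\mathbb{E}\bigl[(\Gamma[X_n,X_n]-q\sigma^2)\,e^{itX_n}\bigr]\bigr|^2 \;\leq\; \mathrm{Var}\bigl(\Gamma[X_n,X_n]\bigr) + \bigl(\mathbb{E}[\Gamma[X_n,X_n]]-q\sigma^2\bigr)^2 \;\xrightarrow[n\to\infty]{}\; 0.
\]
Consequently $\varphi_n$ solves the perturbed linear ODE $\varphi_n'(t) = -t\sigma^2\,\varphi_n(t) + r_n(t)$ with $r_n(t)\to 0$ uniformly on compact $t$-intervals.

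The conclusion follows by an elementary integrating-factor argument: setting $u_n(t) := \varphi_n(t)e^{t^2\sigma^2/2}$, one has $u_n'(t) = r_n(t)e^{t^2\sigma^2/2}$ with $u_n(0)=1$, so $\varphi_n(t)\to e^{-t^2\sigma^2/2}$ pointwise. L\'evy's continuity theorem then delivers the stated convergence in distribution to $\mathcal{N}(0,\sigma^2)$.

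\textbf{Main obstacle.} The only delicate point is the rigorous justification of the Malliavin integration by parts for the complex-valued test function $\psi(x)=e^{itx}$. Fortunately, by Lemma \ref{lem.chaos} each $X_n$ lives on a finite-dimensional Gaussian space, so one can split $e^{itx}$ into real and imaginary parts and apply the classical real-valued integration by parts on $\mathbb{R}^{d_n}$ with the OU generator; no infinite-dimensional Malliavin machinery is actually needed. Alternatively, one could bypass characteristic functions entirely and invoke the equivalent fourth-moment route: from $(i)$ and $(ii)$ deduce $\mathbb{E}[X_n^4]\to 3\sigma^4$ through the chaotic identity linking the kurtosis of an element of $\mathcal{W}_q$ to $\mathrm{Var}(\Gamma[F,F])$, and then invoke the Nualart--Peccati theorem.
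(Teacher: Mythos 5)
Your proof is correct. Note first that the paper does not actually prove Lemma \ref{lem.nol}: it simply refers the reader to \cite{nualart2005central} (and to \cite{nourdin2009stein} for quantitative versions), where the result is obtained through the fourth-moment theorem for multiple Wiener--It\^o integrals. What you have written is a complete, self-contained derivation along the Nourdin--Peccati ``Stein/Malliavin'' line: the identity $\mathbb{E}[F\psi(F)]=\tfrac1q\mathbb{E}[\psi'(F)\Gamma[F,F]]$ for $F\in\mathcal{W}_q$ (via $LF=-qF$ and integration by parts against $L$), the resulting ODE $\varphi_n'(t)=-t\sigma^2\varphi_n(t)+r_n(t)$ for the characteristic function, the Cauchy--Schwarz control of $r_n$ by $\mathrm{Var}(\Gamma[X_n,X_n])+(\mathbb{E}[\Gamma[X_n,X_n]]-q\sigma^2)^2$, and the integrating-factor conclusion are all sound, and the uniformity of $r_n\to 0$ on compact $t$-intervals (the prefactor $|t|/q$ being bounded there) is exactly what is needed before invoking L\'evy's theorem. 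Your handling of the complex test function by reduction to the finite-dimensional real case is also legitimate, since elements of a fixed chaos over a finite-dimensional Gaussian space are polynomials and all the required integrability is automatic. Your alternative route --- passing from $\mathrm{Var}(\Gamma[X_n,X_n])\to 0$ to $\mathbb{E}[X_n^4]-3\mathbb{E}[X_n^2]^2\to 0$ and invoking Nualart--Peccati --- is in fact the one implicitly intended by the paper's citation, so either path is acceptable; the characteristic-function argument has the advantage of being elementary and not requiring the product formula for multiple integrals, while the fourth-moment route connects directly to the quantitative total-variation bounds the paper alludes to just before the lemma. The only degenerate case worth a word is $\sigma^2=0$, where your argument still gives $\varphi_n\to 1$, i.e.\ convergence to the Dirac mass at $0$, consistent with the statement.
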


In order to implement this strategy, one must compute $\Gamma[F_{n,q},F_{n,q}]$ and establishes that its variance tends to zero. To do so, we first establish the following lemma, where $D_n$ denotes the classical Dirichlet kernel 
\begin{equation}\label{eq.dirichlet}
D_n(x):=\frac{1}{n}\sum_{k=1}^n \cos( kx)=
\frac{1}{n}\cos\left(\frac{n+1}{2}x\right)\frac{\sin\left(\frac{nx}{2}\right)}{\sin\left(\frac{x}{2}\right)}.
\end{equation}
If $Y$ is and independent copy of $X$, also independent of the coefficients $\{a_k,b_k\}_{k\ge 1}$, the notation $\mathbb E_{X,Y}$ corresponds to the expectation with respect to $\mathbb P_X \otimes \mathbb P_Y$. In the same way, if $\vec{X}=(X_1,X_2)$ and $\vec{Y}=(Y_1,Y_2)$ are vectors whose entries are all independent copies of $X$ also independent of the coefficients $\{a_k,b_k\}_{k\ge 1}$, then $\mathbb E_{\vec{X},\vec{Y}}$ denotes the expectation with respect to the product measure $\mathbb P_{X_1} \otimes \mathbb P_{X_2}\otimes \mathbb P_{Y_1} \otimes \mathbb P_{Y_2}$. Finally and similarly, $\mathbb E_{\mathbb P, \vec{X},\vec{Y}}$ denotes the expectation with respect to the full product measure $\mathbb P \otimes \mathbb P_{X_1} \otimes \mathbb P_{X_2}\otimes \mathbb P_{Y_1} \otimes \mathbb P_{Y_2}$.

\begin{lem}\label{lem.control.var}
For all $q\geq 2$, there exists a constant $C_q>0$ only depending on $q$ such that for all $n \geq 1$
\[
\text{Var}\left[\Gamma[F_{n,q},F_{n,q}]\right]\le C_q n^2 \mathbb{E}_{\vec{X},\vec{Y}}\left[\left|D_n(X_1-Y_1)D_n(X_2-Y_2)D_n(X_1-X_2)\right|\right].
\]
\end{lem}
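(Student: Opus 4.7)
The plan is to compute $\Gamma[F_{n,q}, F_{n,q}]$ explicitly, then expand its variance via the diagram formula for products of Hermite polynomials, and finally bound each surviving diagram by the target quantity using $|D_n| \leq 1$ together with the symmetries of $\vec X$ and $\vec Y$.

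First, since $\partial_{a_k} S_n(x) = \cos(kx)/\sqrt{n}$, $\partial_{b_k} S_n(x) = \sin(kx)/\sqrt{n}$ and $H_q' = q H_{q-1}$, the chain rule followed by Fubini, after introducing independent copies $X_1, X_2$ of $X$ and using the trigonometric identity $\sum_{k=1}^n \cos(k(X_1-X_2)) = n D_n(X_1-X_2)$, yields
\[
\Gamma[F_{n,q}, F_{n,q}] = q^2 n\, \mathbb{E}_{X_1, X_2}\bigl[ H_{q-1}(S_n(X_1)) H_{q-1}(S_n(X_2))\, D_n(X_1-X_2) \bigr].
\]
Squaring, introducing an independent copy $\vec Y$ of $\vec X$ and applying Fubini, the variance then takes the form $q^4 n^2\, \mathbb{E}_{\vec X, \vec Y}[\, C_n(\vec X, \vec Y)\, D_n(X_1-X_2) D_n(Y_1-Y_2)\,]$, where $C_n(\vec X, \vec Y)$ denotes the $\mathbb{P}$-covariance between $H_{q-1}(S_n(X_1))H_{q-1}(S_n(X_2))$ and $H_{q-1}(S_n(Y_1))H_{q-1}(S_n(Y_2))$.

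Second, for fixed $\vec X, \vec Y$ the vector $(S_n(X_1), S_n(X_2), S_n(Y_1), S_n(Y_2))$ is centered Gaussian with unit variances and pairwise covariances $D_n(\cdot)$, so the classical Hermite-product (diagram) formula expresses the corresponding fourth moment as a finite sum over admissible edge multiplicities $(i_{kl})$ with $\sum_l i_{kl} = q-1$ at each vertex $k$, each term being a product $\prod D_n(\cdot)^{i_{kl}}$. Subtracting $(\mathbb{E}[\Gamma])^2$ cancels exactly the pairings that place no edge between $\{X_1, X_2\}$ and $\{Y_1, Y_2\}$ (those contribute $((q-1)!)^2 D_n(X_1-X_2)^{q-1} D_n(Y_1-Y_2)^{q-1}$), so every surviving term in $C_n$ contains at least one ``crossing'' Dirichlet factor $D_n(X_i - Y_j)$.

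Finally, using $|D_n(x)|^k \leq |D_n(x)|$ for $k \geq 1$, each surviving diagram is dominated by a product of distinct factors from $\{D_n(X_1-X_2), D_n(Y_1-Y_2), D_n(X_i - Y_j)\}$ with at least one crossing factor present. Multiplying by the external $D_n(X_1-X_2) D_n(Y_1-Y_2)$ and repeatedly using $|D_n| \leq 1$ to discard redundant factors, one is left with a product containing $D_n(X_1-X_2)$ and at least one crossing factor. The permutations $X_1 \leftrightarrow X_2$, $Y_1 \leftrightarrow Y_2$ and $\vec X \leftrightarrow \vec Y$ leave $\mathbb{E}_{\vec X, \vec Y}$ invariant and let us identify every such pattern with $|D_n(X_1-Y_1) D_n(X_2-Y_2) D_n(X_1-X_2)|$ (possibly after dropping one more factor bounded by one). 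Summing the finitely many $q$-dependent contributions produces the constant $C_q$. The main obstacle is precisely this last combinatorial step: enumerating the admissible non-trivial diagrams and checking that, after the reductions described, each of them collapses to the specific ``triangle'' of three Dirichlet factors appearing in the statement, rather than to some pattern requiring a separate analysis.
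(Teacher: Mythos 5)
Your route is genuinely different from the paper's. The paper deliberately avoids the product/diagram formula: it first bounds $\text{Var}\left[\Gamma[F_{n,q},F_{n,q}]\right]$ by $\mathbb{E}\left[\Gamma\left[\Gamma[F_{n,q},F_{n,q}],\Gamma[F_{n,q},F_{n,q}]\right]\right]$ via the Poincar\'e inequality, expands this iterated carr\'e du champ with the chain rule into four terms that coincide under the symmetries $X_1\leftrightarrow Y_1$ and $X_2\leftrightarrow Y_2$, and then bounds the four Hermite factors by a constant using H\"older and the fact that $S_n(x)\sim\mathcal N(0,1)$ under $\mathbb P$ for each fixed $x$; the three Dirichlet kernels of the statement appear automatically as the covariances produced by the two successive differentiations. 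Your direct expansion of the variance through the diagram formula is legitimate (conditionally on $\vec{X},\vec{Y}$ the vector $(S_n(X_1),S_n(X_2),S_n(Y_1),S_n(Y_2))$ is standard Gaussian under $\mathbb P$ with covariances given by $D_n$), and your cancellation step is correct: the unique diagram with no edge between $\{X_1,X_2\}$ and $\{Y_1,Y_2\}$ contributes exactly $((q-1)!)^2 D_n(X_1-X_2)^{q-1}D_n(Y_1-Y_2)^{q-1}$, which is precisely what subtracting $(\mathbb{E}[\Gamma])^2$ removes.

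The final reduction, which you yourself flag as the main obstacle, is where the argument as written does not close. If you retain only ``$D_n(X_1-X_2)$ and at least one crossing factor'', you end up with $n^2\,\mathbb{E}_{\vec{X},\vec{Y}}\left[\left|D_n(X_1-X_2)D_n(X_1-Y_1)\right|\right]$, which is of order $\log(n)^2$ and is \emph{not} dominated by the right-hand side of the lemma: the target expectation is of order $\log(n)^3/n^3$, a full power of $n$ smaller than the two-factor expectation, so no constant $C_q$ can absorb the discrepancy. The correct reduction is to keep exactly three factors, namely the two external kernels $D_n(X_1-X_2)$ and $D_n(Y_1-Y_2)$ together with one crossing kernel $D_n(X_i-Y_j)$ supplied by the surviving diagram, and to discard everything else by $|D_n|\le 1$. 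These three edges always form a spanning path of the four points (two disjoint edges joined by the crossing one), and since $X_1,X_2,Y_1,Y_2$ are i.i.d.\ uniform, a relabelling of the variables identifies $\mathbb{E}_{\vec{X},\vec{Y}}\left[\left|D_n(X_1-X_2)D_n(Y_1-Y_2)D_n(X_i-Y_j)\right|\right]$ with $\mathbb{E}_{\vec{X},\vec{Y}}\left[\left|D_n(X_1-Y_1)D_n(X_2-Y_2)D_n(X_1-X_2)\right|\right]$; what matters is the isomorphism class of the edge set (a path, not a triangle or a two-edge graph), and dropping to two factors destroys it. With this observation the finitely many diagrams, whose coefficients depend only on $q$, sum to the constant $C_q$ and your proof is complete.
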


\begin{proof}[Proof of Lemma \ref{lem.control.var}]Let us first recall that $\mathbb E=\mathbb E_{\mathbb P}$ denotes the expectation with respect to the coefficients $\{a_k,b_k\}_{k\ge 1}$. Remark that for any $x,y \in \mathbb R$, we have 
\begin{equation}\label{eq.covar}
\mathbb E[S_n(x) S_n(y)] = \frac{1}{n} \sum_{k=1}^n \cos(kx)\cos(ky)+\sin(kx)\sin(ky) =D_n(x-y).
\end{equation}
Then, using both the chain rule and the bilinearity of the carr\'e du champ operator $\Gamma\left[\cdot,\cdot\right]$ we may write since $H_q'(x)=q H_{q-1}(x)$
\[
\begin{array}{ll}
\displaystyle{\Gamma\left[F_{n,q},F_{n,q}\right] } & =\displaystyle{ n \Gamma\left[ \mathbb E_X[H_q(S_n(X))],\mathbb E_Y[H_q(S_n(Y))]\right] } \\
& =\displaystyle{n q^2 \mathbb{E}_{X,Y}\left[H_{q-1}\left(S_n(X)\right)H_{q-1}\left(S_n(Y)\right)\mathbb E[S_n(X) S_n(Y)]\right]}\\
& =\displaystyle{n q^2 \mathbb{E}_{X,Y}\left[H_{q-1}\left(S_n(X)\right)H_{q-1}\left(S_n(Y)\right)D_n(X-Y)\right].}
\end{array}
\]
In order to control the variance of $\Gamma[F_{n,q},F_{n,q}]$ and avoid lengthy computations induced by the use of the multiplication formula on Wiener chaoses, we may rely on Poincar\'{e} inequality which infers that
$$\text{Var}\left[\Gamma\left[F_{n,q},F_{n,q}\right]\right]\le\mathbb{E}\left[\Gamma\left[\Gamma\left[F_{n,q},F_{n,q}\right],\Gamma\left[F_{n,q},F_{n,q}\right]\right]\right].$$

Keeping in mind that $\Gamma[\cdot,\cdot]$ is a diffusive operator, it obeys the chain rule and we may write for $(A,B,C,D)$ being for instance in Wiener chaoses -- but it remains true for variables in the domain $\mathbb{D}\cap \bigcap_{p\ge 1} L^p(\Omega_1,\mathcal{F}_1,\mathbb{P})$
$$\Gamma[A B, C D]=B D \Gamma[A,C]+B C \Gamma[A,D]+A C \Gamma[B,D]+AD\Gamma[B,C].$$
Then, applying the above chain rule to the variables $A=\mathbb{E}_{X_1}\left[H_{q-1}\left(S_n(X_1)\right)\right]$ and $B=\mathbb{E}_{Y_1}\left[H_{q-1}\left(S_n(Y_1)\right)\right]$ which belong to $\mathcal W_q$ and the variables $C=\mathbb{E}_{X_2}\left[H_{q-1}\left(S_n(X_2)\right)\right]$ and $D=\mathbb{E}_{Y_2}\left[H_{q-1}\left(S_n(Y_2)\right)\right]$ which belong to $\mathcal W_{q-1}$, and again using the bilinearity we get
\[\mathbb{E}\left[\Gamma\left[\Gamma\left[F_{n,q},F_{n,q}\right],\Gamma\left[F_{n,q},F_{n,q}\right]\right]\right]=n^2 q^2 (q-1)^2\times (\widetilde{A}+\widetilde{B}+\widetilde{C}+\widetilde{D}),
\]
with 
\[
\begin{array}{lll}
\widetilde{ A}&:=& \mathbb E_{\mathbb P, \vec{X},\vec{Y}}\left[D_n(X_1-Y_1)D_n(X_2-Y_2)D_n(X_1-X_2)\mathcal H_{n,q}(X_1,X_2,Y_1,Y_2)\right]\\
\widetilde{B}&:=&\mathbb E_{\mathbb P, \vec{X},\vec{Y}}\left[D_n(X_1-Y_1)D_n(X_2-Y_2)D_n(X_1-Y_2)\mathcal H_{n,q}(X_1,Y_2,Y_1,X_2)\right]\\
\widetilde{C}&:= &\mathbb E_{\mathbb P, \vec{X},\vec{Y}}\left[D_n(X_1-Y_1)D_n(X_2-Y_2)D_n(Y_1-X_2)\mathcal H_{n,q}(Y_1,X_2,X_1,Y_2)\right]\\
\widetilde{D}&:= &\mathbb E_{\mathbb P, \vec{X},\vec{Y}}\left[D_n(X_1-Y_1)D_n(X_2-Y_2)D_n(Y_1-Y_2)\mathcal H_{n,q}(Y_1,Y_2,X_1,X_2)\right],
\end{array}
\]
where we introduced the following notation to simplify the expressions 
\[
\mathcal H_{n,q}(X_1,X_2,Y_1,Y_2):=H_{q-2}\left(S_n(X_1)\right)H_{q-2}\left(S_n(X_2)\right) H_{q-1}\left(S_n(Y_1)\right)H_{q-1}\left(S_n(Y_2)\right).
\]
Noticing that each of the above variables is invariant by the operations $X_1\leftrightarrow Y_1$ or $X_2\leftrightarrow Y_2$, we get finally
\begin{equation}\label{eq.atilde}
\mathbb{E}\left[\Gamma\left[\Gamma\left[F_{n,q},F_{n,q}\right],\Gamma\left[F_{n,q},F_{n,q}\right]\right]\right] = 4  n^2 q^2 (q-1)^2 \widetilde{A}
\end{equation}
where, by the triangle inequality and Fubini inversion of integrals, we have
\[
0 \leq \widetilde{A} \leq \mathbb E_{\vec{X},\vec{Y}}\left[| D_n(X_1-Y_1)D_n(X_2-Y_2)D_n(X_1-X_2) | \mathbb E\left[| \mathcal H_{n,q}(X_1,X_2,Y_1,Y_2)|\right]\right].
\]
Then, using H\"{o}lder inequality, we can write
\[
\mathbb E\left[ | \mathcal H_{n,q}(X_1,X_2,Y_1,Y_2)|\right] \leq \prod_{i=1}^2 \mathbb{E}\left[H_{q-2}(S_n(X_i))^4\right]^{1/4}\mathbb{E}\left[H_{q-1}(S_n(Y_i))^4\right]^{1/4}.
\]
Here, one should again keep in mind that, for any given value of $x$, we have the identity in law $S_n(x)\sim \mathcal{N}(0,1)$ under $\mathbb{P}$, so that if $N\sim\mathcal N(0,1)$ under $\mathbb P$, the right hand side of the last inequality can be replaced by the following constant term.
\[
\mathbb E\left[ | \mathcal H_{n,q}(X_1,X_2,Y_1,Y_2)|\right] \leq \mathbb{E}\left[H_{q-2}(N)^4\right]^{\frac{1}{2}}\mathbb{E}\left[H_{q-1}(N)^4\right]^{\frac{1}{2}}.
\]
Injecting this estimates in Equation \eqref{eq.atilde} establishes the lemma with the constant
$$C_q:=4 q^2(q-1)^2 \mathbb{E}\left[H_{q-2}(N)^4\right]^{\frac{1}{2}}\mathbb{E}\left[H_{q-1}(N)^4\right]^{\frac{1}{2}}.$$
\end{proof}
Thanks to Lemma \ref{lem.control.var}, in order to prove that $\text{Var}\left[\Gamma\left[F_{n,q},F_{n,q}\right]\right]$ tends to zero as $n$ goes to infinity, we are thus left to prove that $n^2 \mathbb{E}_{\vec{X},\vec{Y}}\left[\left|D_n(X_1-Y_1)D_n(X_2-Y_2)D_n(X_1-X_2)\right|\right]$ tends to zero, which is the object of the next lemma.

\begin{lem}\label{lem.controldn}
Provided that $X_1,X_2,Y_1,Y_2$ are independent and uniformly distributed over the interval $[0,2\pi]$, we have
$$n^2 \mathbb{E}_{\vec{X},\vec{Y}}\left[\left|D_n(X_1-Y_1)D_n(X_2-Y_2)D_n(X_1-X_2)\right|\right]=\text{O}\left(\frac{\log(n)^3}{n}\right)\xrightarrow[n\to\infty]{} 0.$$
\end{lem}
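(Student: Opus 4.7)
The plan is to reduce the quadruple integral to three independent one-dimensional integrals of $|D_n|$, each of which is $O(\log n / n)$ by the classical logarithmic bound on the $L^1$ norm of the (normalized) Dirichlet kernel. Concretely, from the closed form \eqref{eq.dirichlet} one gets the two pointwise bounds $|D_n(x)| \leq 1$ and $|D_n(x)| \leq \frac{1}{n |\sin(x/2)|}$. Writing $\int_0^{2\pi}|D_n(x)|dx$ as the sum of the integral over $\{|x|\le 1/n\}\cup\{|2\pi-x|\le 1/n\}$, controlled by the first bound, and the complementary integral, controlled by the second bound together with $|\sin(x/2)|\ge |x|/\pi$ on $[-\pi,\pi]$, one obtains the standard estimate
\[
\int_0^{2\pi} |D_n(x)|\,dx \leq C\,\frac{\log n}{n}
\]
for an absolute constant $C>0$ and all $n\geq 2$. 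Since $D_n$ is $2\pi$-periodic and the uniform measure on $[0,2\pi]$ is translation invariant, this gives $\mathbb{E}_X[|D_n(X-a)|] \leq C'\log n/n$ uniformly in $a\in\mathbb{R}$.

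Next, I apply Fubini's theorem and integrate the variables in the order $Y_1, Y_2, (X_1, X_2)$. Conditioning on $(X_1,X_2,Y_2)$, the factor $|D_n(X_1-X_2)D_n(X_2-Y_2)|$ does not depend on $Y_1$, so integrating out $Y_1$ yields a factor $C' \log n/n$ by the uniform bound above. Conditioning next on $(X_1,X_2)$, integrating out $Y_2$ similarly yields another factor $C'\log n/n$. What remains is $\mathbb{E}_{\vec X}[|D_n(X_1-X_2)|]$, which once more is $O(\log n/n)$ by the same argument (integrating first in $X_1$ with $X_2$ frozen). Multiplying the three factors and the $n^2$ prefactor produces the announced $O(\log^3 n/n)$ bound.

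The argument is essentially routine Fubini once the logarithmic $L^1$ bound for $D_n$ is in hand; the only mild technical point is that bound itself, which is a standard exercise. I do not expect any genuine obstacle: the crucial structural feature exploited here is that the three Dirichlet factors involve three pairs of variables that can be disentangled one at a time, so the total influence of the singularities adds only a logarithmic loss per integration rather than a polynomial one.
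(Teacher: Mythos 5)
Your proof is correct and follows essentially the same route as the paper: both rest on the logarithmic $L^1$ bound for the normalized Dirichlet kernel and on peeling off the three factors by iterated one-variable integration, each costing a factor $O(\log n/n)$. The only cosmetic difference is that the paper first rescales the variables to $[-n\pi,n\pi]$ and bounds $|D_n(y/n)|\le \pi|\sin(y/2)|/|y|$, whereas you work directly with $\int_0^{2\pi}|D_n|\,dx\le C\log n/n$ via the standard splitting near the singularity.
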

\begin{proof}[Proof of Lemma \ref{lem.controldn}]
From the expression \eqref{eq.dirichlet} above of the Dirichlet kernel, for any $y\in[-n\pi,n\pi]$ we have
\begin{eqnarray*}
\left|D_n\left(\frac{y}{n}\right)\right|\le  \frac{|\sin(\frac{y}{2})|}{n |\sin(\frac{y}{2n})|}
\le \frac{\pi|\sin(\frac{y}{2})|}{|y|},\;\;\text{by using that}\;\;|\sin(\theta)|\ge \frac{2}{\pi}|\theta|\;\text{on}\;\left[0,\frac{\pi}{2}\right].
\end{eqnarray*}
Performing a change of variables and using the previous bound, we then obtain
\begin{eqnarray*}
&n^2& \mathbb{E}_{\vec{X},\vec{Y}}\left[\left|D_n(X_1-Y_1)D_n(X_2-Y_2)D_n(X_1-X_2)\right|\right]\\
&= &\frac{1}{{}{(2\pi)^4}n^2}\int_{[-n \pi, n\pi]^4}\frac{|\sin\left(\frac{x_1-y_1}{2}\right)|}{n \left|\sin\left(\frac{x_1-y_1}{2 n}\right)\right|}\frac{|\sin\left(\frac{x_2-y_2}{2}\right)|}{n \left|\sin\left(\frac{x_2-y_2}{2 n}\right)\right|}\frac{|\sin\left(\frac{x_1-x_2}{2}\right)|}{n \left|\sin\left(\frac{x_1-x_2}{2 n}\right)\right|}dx_1 dx_2dy_1dy_2\\
&\leq &{}{\frac{1}{16\pi n^2}}\int_{[-n \pi, n\pi]^4}\frac{|\sin\left(\frac{x_1-y_1}{2}\right)|}{\left|x_1-y_1\right|}\frac{|\sin\left(\frac{x_2-y_2}{2}\right)|}{\left|x_2-y_2\right|}\frac{|\sin\left(\frac{x_1-x_2}{2}\right)|}{\left|x_1-x_2\right|}dx_1 dx_2dy_1dy_2.
\end{eqnarray*}
Besides, for any value $x_1\in[-n\pi,n\pi]$ it holds that, for some absolute $C>0$
\begin{eqnarray*}
\int_{[-n \pi, n\pi]}\frac{|\sin\left(\frac{x_1-y_1}{2}\right)|}{\left|x_1-y_1\right|} dy_1&\le&\int_{[-2 n \pi, 2 n\pi]}\frac{|\sin\left(\frac{y_1}{2}\right)|}{\left|y_1\right|} dy_1\leq  C \log(n).
\end{eqnarray*}
As a result we deduce that
\begin{eqnarray*}
&&{}{\frac{1}{16\pi n^2}}\int_{[-n \pi, n\pi]^4}\frac{|\sin\left(\frac{x_1-y_1}{2}\right)|}{\left|x_1-y_1\right|}\frac{|\sin\left(\frac{x_2-y_2}{2}\right)|}{\left|x_2-y_2\right|}\frac{|\sin\left(\frac{x_1-x_2}{2}\right)|}{\left|x_1-x_2\right|}dx_1 dx_2dy_1dy_2\\
&=&{}{\frac{1}{16\pi n^2}}\int_{[-n \pi, n\pi]^3}\frac{|\sin\left(\frac{x_2-y_2}{2}\right)|}{\left|x_2-y_2\right|}\frac{|\sin\left(\frac{x_1-x_2}{2}\right)|}{\left|x_1-x_2\right|}\left(\int_{-n\pi}^{n\pi}\frac{|\sin\left(\frac{x_1-y_1}{2}\right)|}{\left|x_1-y_1\right|}dy_1\right) dx_1dx_2dy_2\\
&\le& {}{\frac{C\log(n)}{16\pi n^2}}\int_{[-n \pi, n\pi]^2}\frac{|\sin\left(\frac{x_1-x_2}{2}\right)|}{\left|x_1-x_2\right|}\left(\int_{-n\pi}^{n\pi}\frac{|\sin\left(\frac{x_2-y_2}{2}\right)|}{\left|x_2-y_2\right|}dy_2\right) dx_1dx_2\\
&\le& {}{\frac{C^2\log(n)^2}{16\pi n^2}}\int_{-n\pi}^{n\pi}\left(\int_{-n\pi}^{n\pi}\frac{|\sin\left(\frac{x_1-x_2}{2}\right)|}{\left|x_1-x_2\right|}dx_1\right)dx_2\le{}{\frac{C^3\log(n)^3}{8 n}}\to 0.
\end{eqnarray*}
\end{proof}
In view of Lemma \ref{lem.nol}, to complete the proof that $\Gamma[F_{n,q},F_{n,q}]$ tends to a constant $q \sigma_q^2$ it remains to show that the expectation converges. This is precisely the object of the next lemma.
\begin{lem}\label{variance-Fn}
For any $q\ge 2$, we have as $n$ goes to infinity
\[
\mathbb{E}_\mathbb{P}\left[\Gamma[F_{n,q},F_{n,q}]\right] \xrightarrow[n\to\infty]{} {}{\frac{q q!}{2\pi}} \int_{\mathbb{R}}\left( \frac{\sin(x)}{x} \right)^q(x)dx.
\]
\end{lem}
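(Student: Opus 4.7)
The plan is to start from the explicit formula for $\Gamma[F_{n,q},F_{n,q}]$ obtained in the course of the proof of Lemma \ref{lem.control.var}, namely
\[
\Gamma[F_{n,q},F_{n,q}] = n q^2 \, \mathbb{E}_{X,Y}\bigl[H_{q-1}(S_n(X)) H_{q-1}(S_n(Y)) D_n(X-Y)\bigr],
\]
take the $\mathbb{P}$--expectation, and swap the integrals via Fubini. Conditionally on $(X,Y)\in[0,2\pi]^2$, the pair $(S_n(X),S_n(Y))$ is a centered Gaussian vector with unit variances and covariance $\mathbb{E}_\mathbb{P}[S_n(X)S_n(Y)] = D_n(X-Y)$, by the computation \eqref{eq.covar}. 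The classical diagram/Mehler identity then yields
\[
\mathbb{E}_\mathbb{P}\bigl[H_{q-1}(S_n(X))H_{q-1}(S_n(Y))\bigr] = (q-1)! \, D_n(X-Y)^{q-1},
\]
so that
\[
\mathbb{E}_\mathbb{P}\bigl[\Gamma[F_{n,q},F_{n,q}]\bigr] = n \, q \cdot q! \, \mathbb{E}_{X,Y}\bigl[D_n(X-Y)^q\bigr] = \frac{n \, q \cdot q!}{2\pi}\int_{-\pi}^{\pi} D_n(u)^q \, du,
\]
where the last equality uses the $2\pi$-periodicity of $D_n$ together with the change of variable $u=x-y$.

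The task thus reduces to establishing the scaling limit
\[
\frac{n}{2\pi}\int_{-\pi}^{\pi} D_n(u)^q\,du \xrightarrow[n\to\infty]{} \frac{1}{2\pi}\int_{\mathbb{R}} \left(\frac{\sin v}{v}\right)^q dv.
\]
To do so, I would perform the substitution $u = 2v/(2n+1)$ (or, equivalently, $u=v/n$, up to asymptotically negligible modifications), use the identity
\[
D_n(u) = \frac{1}{2n}\left(\frac{\sin((2n+1)u/2)}{\sin(u/2)} - 1\right)
\]
coming from the geometric sum $\sum_{k=-n}^n e^{iku}$, and observe that pointwise in $v$,
\[
\frac{n}{2\pi}\cdot \frac{2}{2n+1} \cdot D_n\!\left(\tfrac{2v}{2n+1}\right)^{q} \longrightarrow \frac{1}{2\pi}\left(\frac{\sin v}{v}\right)^q,
\]
since $(2n+1)\sin(v/(2n+1)) \to v$ and the residual term $1/(2n)$ vanishes.

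The main obstacle is to upgrade this pointwise convergence to convergence of integrals on the whole real line, which requires a uniform integrable envelope. For this I would exploit the two complementary bounds $|D_n(u)|\le 1$ (trivial from the definition) and $|D_n(u)|\le \pi/(n|u|)$ on $[-\pi,\pi]$ (from expression \eqref{eq.dirichlet}, using $|\sin(u/2)|\ge |u|/\pi$). After the substitution, these translate into $|D_n(2v/(2n+1))|^q\le \min\bigl(1,\,C/|v|^q\bigr)$ for some absolute constant $C$, which is integrable on $\mathbb{R}$ precisely because $q\ge 2$. Dominated convergence then delivers the required limit, and combining with the prefactor $q\cdot q!$ yields
\[
\mathbb{E}_\mathbb{P}\bigl[\Gamma[F_{n,q},F_{n,q}]\bigr]\xrightarrow[n\to\infty]{} \frac{q\cdot q!}{2\pi}\int_\mathbb{R}\left(\frac{\sin v}{v}\right)^q dv,
\]
as announced. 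Note that the finiteness of the limiting integral for $q\ge 2$ follows from the same tail bound used above; this is also the reason the statement restricts to $q\ge 2$.
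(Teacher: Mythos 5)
Your proof is correct and follows essentially the same route as the paper: both reduce $\mathbb{E}_{\mathbb{P}}\left[\Gamma[F_{n,q},F_{n,q}]\right]$ to $q\,q!\,n\,\mathbb{E}_{X,Y}\left[D_n(X-Y)^q\right]$ via the Mehler/diagram identity (the paper gets there through $\mathbb{E}[\Gamma[F,F]]=q\,\mathbb{E}[F^2]$ and Hermite covariances at order $q$, you through the explicit $\Gamma$ formula and order $q-1$, which is the same computation), and then both pass to the limit by rescaling the Dirichlet kernel and dominating by an integrable envelope of the form $\min(1,C/|v|^q)$ for $q\ge 2$. The only cosmetic difference is your $2n+1$ normalization in place of the paper's substitution $u=x/n$.
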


\begin{proof}[Proof of Lemma \ref{variance-Fn}]
Using Equation \eqref{eq.covar}, we write
\begin{eqnarray*}
\mathbb{E}\left[\Gamma[F_{n,q},F_{n,q}]\right]&=&q \mathbb{E}\left[F_{n,q}^2\right]=q \mathbb E \left[ \left( \sqrt{n} \mathbb{E}_{X}\left[ H_q(S_n(X))\right]\right)^2\right]\\
&=&q n \mathbb E \left[  \mathbb{E}_{X}\left[ H_q(S_n(X))\right] \mathbb{E}_{Y}\left[ H_q(S_n(Y))\right] \right]\\
&=&q n   \mathbb{E}_{X,Y}\left[ \mathbb E \left[ H_q(S_n(X)) H_q(S_n(Y))\right] \right]\\
&=&q q! n \mathbb{E}_{X,Y}\left[\mathbb E[S_n(X)S_n(Y)]^q\right]\\
&=&q q! n \mathbb{E}_{X,Y}\left[D_n(X-Y)^q\right]\\
&=&q q! n \mathbb{E}_{X}\left[D_n(X)^q\right]\\
&=&{}{\frac{q q!}{2\pi}} \int_{-n\pi}^{n\pi} \left(D_n\left( \frac{x}{n} \right)\right)^q dx.
\end{eqnarray*}
As $q\ge 2$, we observe that
$$\left| D_n\left( \frac{x}{n} \right)\right|=\left|\cos\left(\frac{(n+1)x}{2n}\right)\frac{\sin\left(\frac{x}{2}\right)}{n\sin\left(\frac{x}{2n}\right)}\right|\le \pi^q\frac{|\sin(x/2)|^q}{|x|^q}\in \mathbb{L}^1(\mathbb{R},dx).$$
Besides, for a fixed $x$, we also have as $n$ goes to infinity
$$D_n\left( \frac{x}{n} \right)=\cos\left(\frac{(n+1)x}{2n}\right)\frac{\sin\left(\frac{x}{2}\right)}{n\sin\left(\frac{x}{2n}\right)}\xrightarrow[n\to\infty]{} \frac{\sin(x)}{x}.$$
We conclude by Lebesgue dominated convergence.
\end{proof}
The combination of Lemmas \ref{lem.chaos} to Lemma \ref{variance-Fn} established above achieves the proof of Theorem \ref{Gaussian-hermite-clt} for the projection in each $q$-th Wiener chaos.

\begin{rem}\label{rem.jointe}
{}{Note that by the celebrated Peccati--Tudor Theorem allowing to deduce the joint convergence in a finite number of chaoses from the convergence of the projections, see e.g. \cite{peccati2005gaussian}, we may in fact infer that under the distribution $\mathbb P$ and for any fixed integers $q_1, \ldots, q_r$, we have
\[
(F_{n,q_1}, \ldots, F_{n,q_r}) \xrightarrow[n \to \infty]{Law}
\mathcal{N}\left(0,
\left(
\begin{array}{cccc}
\sigma_{q_1}^2 & 0&\cdots&0\\
0&\sigma_{q_2}^2& \ddots &0\\
\vdots&\ddots&\ddots& 0\\
0& \cdots &0 &\sigma_{q_r}^2
\end{array}
\right)
\right).
\]
}
\end{rem}

\subsection{The case of an infinite expansion on Wiener chaoses}

In this section we deal with the case of a general test function $\phi\in\mathbb{L}^2(\gamma)$ and complete the proof of Theorem \ref{Main-Gaussian}. As a first step one needs to establish the following lemma, where we recall that the Hermite coefficients $c_k(\phi)$ are such that
\[
\phi\stackrel{\mathbb{L}^2}{=}\sum_{k=0}^\infty c_k H_k, \quad \text{and} \quad F_{n,k} = \sqrt{n}~\mathbb{E}_X\left[H_k\left(S_n(X)\right)\right].
\]
\begin{lem}\label{lem.decomp-chaos}
We have the following identification in $\mathbb L^2$
\begin{equation}\label{decomp-chaos}
\sqrt{n} \left(\mathbb{E}_X\left[\phi(S_n(X))\right]-\int_{\mathbb{R}}\phi(x)d\gamma(x)\right)\stackrel{\mathbb{L}^2}{=}\sum_{k=2}^\infty c_k(\phi) F_{n,k}.
\end{equation}
\end{lem}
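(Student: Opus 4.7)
The plan is to establish the identity by truncating the Hermite expansion at some level $N$, passing to $L^2(\mathbb P)$ limits on both sides as $N\to\infty$, and exploiting the orthogonality of different Wiener chaoses. First I would set $\phi_N := \sum_{k=0}^N c_k(\phi) H_k$, so that $\|\phi-\phi_N\|_{L^2(\gamma)}^2 = \sum_{k>N} c_k(\phi)^2 k! \to 0$ by definition of the Hermite coefficients. Since $\phi_N$ is a finite sum, using $c_0(\phi)=\gamma(\phi)$, $\gamma(H_k)=0$ for $k\geq 1$, and the trivial identity $F_{n,1}=0$ noted after Lemma~\ref{lem.chaos}, we obtain the \emph{finite} identity
\[
\sqrt{n}\left(\mathbb E_X[\phi_N(S_n(X))]-\gamma(\phi_N)\right)=\sum_{k=2}^N c_k(\phi)\, F_{n,k}.
\]

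Next I would handle the convergence of the left-hand side as $N\to\infty$ (with $n$ fixed). The key observation, already used repeatedly in the previous section, is that under $\mathbb P$ and for every fixed $x$, the random variable $S_n(x)$ has the standard Gaussian distribution, so $\mathbb E[(\phi(S_n(x))-\phi_N(S_n(x)))^2]=\|\phi-\phi_N\|_{L^2(\gamma)}^2$. Applying Jensen's inequality and Fubini gives
\[
\mathbb E\Bigl[\bigl(\mathbb E_X[\phi(S_n(X))]-\mathbb E_X[\phi_N(S_n(X))]\bigr)^2\Bigr]\leq \mathbb E_X\mathbb E\bigl[(\phi(S_n(X))-\phi_N(S_n(X)))^2\bigr]=\|\phi-\phi_N\|_{L^2(\gamma)}^2,
\]
which tends to zero, so the left-hand side of \eqref{decomp-chaos} is the $L^2(\mathbb P)$ limit of its truncated analogue.

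For the right-hand side I would use the orthogonality of Wiener chaoses: Lemma~\ref{lem.chaos} implies $F_{n,k}\in\mathcal W_k$, so the $F_{n,k}$ are pairwise orthogonal in $L^2(\mathbb P)$. Using the computation of the proof of Lemma~\ref{variance-Fn}, we have $\mathbb E[F_{n,k}^2]= k!\, n\,\mathbb E_{X,Y}[D_n(X-Y)^k]$. The Dirichlet kernel satisfies $|D_n|\leq 1$ pointwise, and a direct orthogonality computation gives $\mathbb E_{X,Y}[D_n(X-Y)^2]=\frac{1}{2n}$, hence for any $k\geq 2$
\[
\mathbb E[F_{n,k}^2]\leq k!\, n\,\mathbb E_{X,Y}[D_n(X-Y)^2]=\frac{k!}{2}.
\]
Combined with orthogonality this yields, for any $N'\geq N\geq 2$,
\[
\mathbb E\!\left[\Bigl(\sum_{k=N}^{N'} c_k(\phi) F_{n,k}\Bigr)^2\right]=\sum_{k=N}^{N'} c_k(\phi)^2\,\mathbb E[F_{n,k}^2]\leq \tfrac{1}{2}\sum_{k\geq N} c_k(\phi)^2 k!,
\]
which vanishes as $N\to\infty$ since $\gamma(\phi^2)=\sum_k c_k(\phi)^2 k!<\infty$. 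So $\sum_{k=2}^N c_k(\phi) F_{n,k}$ is Cauchy in $L^2(\mathbb P)$.

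Passing to the $L^2(\mathbb P)$ limit $N\to\infty$ in the finite identity above then gives \eqref{decomp-chaos}. The only substantive step is the uniform bound $\mathbb E[F_{n,k}^2]\leq k!/2$, which guarantees $L^2$-summability of the chaotic series; everything else is either a routine Cauchy--Schwarz/Fubini argument or a consequence of the Gaussianity of $S_n(x)$ for each fixed $x$.
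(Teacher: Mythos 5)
Your proof is correct and follows essentially the same route as the paper: the key point in both is the uniform bound $\mathbb E[F_{n,k}^2]\le C\,k!$ obtained from $\mathbb E[F_{n,k}^2]=k!\,n\,\mathbb E_{X,Y}[D_n(X-Y)^k]$ together with $|D_n|\le 1$ and the computation of $n\,\mathbb E_X[D_n(X)^2]$, combined with chaos orthogonality and $\sum_k c_k(\phi)^2k!<\infty$. You are in fact slightly more explicit than the paper (you compute the constant $C=1/2$ exactly and spell out the Jensen--Fubini argument identifying the $L^2$ limit of the left-hand side, which the paper leaves implicit), but this is a matter of detail, not of method.
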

\begin{proof}[Proof of Lemma \ref{lem.decomp-chaos}]
First of all, relying on the proof of Lemma \ref{variance-Fn}, and using the fact that $|D_n(x)|\le 1$, we may write
\begin{eqnarray*}
\mathbb{E}_{\mathbb{P}}\left[F_{n,q}^2\right]&\le& q! n \mathbb{E}_{X}\left[\left|D_n(X)\right|^q\right]\leq   q! n \mathbb{E}_{X}\left[\left|D_n(X)\right|^2\right]\leq  C q!.
\end{eqnarray*}
Indeed, as seen in the proof of Lemma \ref{variance-Fn}, the sequence $n \, \mathbb{E}_{X}[\left|D_n(X)\right|^2]$ is convergent, hence bounded by some constant $C>0$. Gathering that (i) $F_{n,q}\in\mathcal{W}_q$, (ii) Wiener chaoses are orthogonal spaces and (iii) $\phi\in\mathbb{L}^2(\gamma)\Rightarrow \sum_{k=0}^\infty c_k(\phi)^2 k! <\infty$ together with the previous estimate implies that the series in Equation \eqref{decomp-chaos} is converging in $\mathbb{L}^2(\mathbb{P})$.
\end{proof}
Next, relying on Theorem \ref{Gaussian-hermite-clt} for the projection in each chaos and, as already noted in Remark \ref{rem.jointe}, the celebrated Peccati--Tudor Theorem allowing to deduce the joint convergence in a finite number of chaoses from the convergence of the projections, we may conclude that under $\mathbb P$ and for any fixed integer $Q\ge 1$ 
$$\sum_{k=2}^Q F_{n,q}\xrightarrow[n\to\infty]{\text{Law}}\mathcal{N}(0,\sum_{k=2}^Q k! \sigma_k^2).$$
This central convergence gathered with the fact that
$$\sup_{n\ge 1} \mathbb{E}_{\mathbb{P}}\left[\left(\sum_{k=Q+1}^\infty c_k(\phi)F_{n,k}\right)^2\right]\le C \sum_{k=Q+1}^\infty c_k(\phi)^2 k!=\underset{Q\to\infty}{o}\hspace{-0.3cm}\left(1\right),$$
achieves the proof of the Theorem \ref{Main-Gaussian}.

\section{Fluctuations for general symmetric coefficients} \label{sec.pasgauss}

We give in this section the detailed proof of Theorem \ref{Main-pas-Gaussian}. As already mentioned above, the methods employed in Section \ref{sec.gauss} are very specific to the Gaussian framework and do not apply for general symmetric distributions. We shall therefore replace the fruitful Malliavin--Stein approach by a combination of ideas among which
\begin{itemize}
\item a combinatorial formula, allowing to express power of sums of independent variables as homogeneous polynomials in the entries, up to negligible terms. This point is the object of the next Section \ref{sec.combi} ;
\item noise sensitivity methods to reduce the asymptotics behavior of homogeneous polynomials with independent entries to their analogue in the Gaussian framework. This point is the object of Section \ref{sec.noise} below.
\end{itemize}
Prior to this program, let us justify that, in the non-Gaussian framework, the variable $\mathbb{E}_X\left[\phi\left(S_n(X)\right)\right])$ is well defined if $\phi$ satisfies the sole condition $(\star)$, and that it is then well approximated by the associated Hermite sums.

\subsection{Hermite series approximation} \label{sec.well}

Under the condition $(\star)$, it might not completely clear how to give a meaning to the term $\mathbb{E}_X\left[\phi(S_n(X))\right]$. Indeed, one cannot use any more that $S_n(X)\sim\mathcal{N}(0,1)$ since $\{a_k,b_k\}_{k\ge 1}$ are not Gaussian any more. In order to bypass this problem, we may prove that, under the condition $(\star)$,  the Hermite expansion of $\phi$ converges uniformly on each compact set. Namely we have the following lemma.

\begin{lem}\label{decomp-gene-hermite}
If the test function $\phi$ satisfies the condition $(\star)$, then its Hermite expansion $\sum_{k=0}^ N c_k(\phi) H_k(x)$ converges uniformly on each compact set of $\mathbb{R}$. In particular, $\phi$ is continuous and almost surely with respect to $\mathbb{P}$ we have
\begin{equation}\label{decomp-gene-hermite-eq}
\sum_{k=0}^N c_k(\phi)\mathbb{E}_X\left[H_k(S_n(X))\right]\xrightarrow[N\to\infty]{} \mathbb{E}_X\left[\phi(S_n(X))\right].
\end{equation}
\end{lem}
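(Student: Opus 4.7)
The plan is first to upgrade the $L^2(\gamma)$ Hermite expansion of $\phi$ to a pointwise series that converges uniformly on every compact subset of $\mathbb{R}$, which automatically provides a continuous representative of $\phi$, and then to deduce \eqref{decomp-gene-hermite-eq} by exchanging the partial summation with the $\mathbb{E}_X$ expectation on the (random but finite) range of $S_n(\cdot)$.

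The cornerstone is the classical Cram\'er bound $|H_k(x)|\le c_0\sqrt{k!}\,e^{x^2/4}$ valid for some absolute constant $c_0>0$, every $k\ge 0$ and every $x\in\mathbb{R}$. The sum-condition in $(\star)$ forces in particular $\sup_{k\ge 0}|c_k(\phi)|\,k!\,A^k<+\infty$, so there exists $C>0$ with $|c_k(\phi)|\le C/(k!\,A^k)$ for all $k\ge 0$. For any $R>0$ and any $|x|\le R$ we then get
\[
|c_k(\phi)H_k(x)|\le c_0\,C\,e^{R^2/4}\,\frac{1}{A^k\sqrt{k!}},
\]
and the right-hand side is summable in $k$ (ratio test). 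Thus the Hermite series $\sum_k c_k(\phi)H_k$ converges normally on every compact of $\mathbb{R}$, exhibiting a continuous limit $\widetilde\phi$. Since partial Hermite sums also converge to $\phi$ in $L^2(\gamma)$, uniqueness of $L^2$-limits gives $\widetilde\phi=\phi$ almost everywhere with respect to $\gamma$; replacing $\phi$ by its continuous version $\widetilde\phi$, we may and do assume from now on that $\phi$ is continuous.

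For \eqref{decomp-gene-hermite-eq}, fix $\omega\in\Omega$ and $n\ge 1$. At this $\omega$ the trigonometric polynomial $x\mapsto S_n(x)$ is bounded on $[0,2\pi]$ by the finite quantity $M_n(\omega):=n^{-1/2}\sum_{k=1}^n(|a_k(\omega)|+|b_k(\omega)|)$, so its range lies in the compact $K_n(\omega):=[-M_n(\omega),M_n(\omega)]$. Applying the uniform convergence established above on $K_n(\omega)$ yields
\[
\sup_{x\in[0,2\pi]}\Bigl|\sum_{k=0}^N c_k(\phi)H_k(S_n(x))-\phi(S_n(x))\Bigr|\xrightarrow[N\to\infty]{}0,
\]
and integrating against $dx/(2\pi)$, together with linearity of finite sums with respect to $\mathbb{E}_X$, delivers \eqref{decomp-gene-hermite-eq}. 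Note that this holds for every $\omega\in\Omega$, which is strictly stronger than the $\mathbb{P}$-almost sure statement of the lemma. The only non-routine ingredient is Cram\'er's inequality; the rest is a direct consequence of condition $(\star)$ and of the trivial boundedness of each realisation of $S_n(\cdot)$.
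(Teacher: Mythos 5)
Your proof is correct, and it takes a genuinely different route from the one in the paper. The paper establishes the locally uniform convergence of the Hermite series by invoking a classical criterion of Sansone for Hermite expansions, which requires both the growth hypothesis $\phi(x)=\text{O}(e^{\kappa x^2/2})$ and the square-integrability of $\phi(x)-2x\phi'(x)$ against $\gamma$; the latter is then checked by translating it into a condition on the Hermite coefficients via Remark \ref{rem.star}. You instead combine Cram\'er's inequality $|H_k(x)|\le c_0\sqrt{k!}\,e^{x^2/4}$ (in the probabilists' normalization used here) with the coefficient decay $|c_k(\phi)|\le C/(k!A^k)$ extracted from the summability clause of $(\star)$, obtaining normal convergence on compacts directly since $\sum_k (A^k\sqrt{k!})^{-1}<\infty$. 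Your argument is more self-contained and elementary: it bypasses any differentiability considerations on $\phi$ and in fact uses only the second half of condition $(\star)$, not the pointwise growth bound. The one point to handle with a little care, which you do address, is that both arguments really produce a continuous function equal to $\phi$ only $\gamma$-almost everywhere, so one must (as you and, implicitly, the paper do) identify $\phi$ with this continuous representative before writing $\mathbb{E}_X[\phi(S_n(X))]$. The second half of your proof — reducing \eqref{decomp-gene-hermite-eq} to uniform convergence on the compact range of the fixed trigonometric polynomial $S_n(\cdot)$ and integrating — is identical in substance to the paper's conclusion, and your observation that it holds for every $\omega$ rather than merely almost surely is accurate.
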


\begin{proof}[Proof of Lemma \ref{decomp-gene-hermite}]
Let us recall the following criterion for Hermite series which may be found in \cite[p. 367]{sansone1959orthogonal}. For the sake of readability, we first state the criterion with the notations of the citation and we shall then translate it using our convention. In comparison with our probabilistic notations,  the \textit{physics} convention for Hermite polynomials is the following $h_n(x):=2^{\frac n 2} H_n(\sqrt{2} x)$. The initial criterion is the following. \par 
\smallskip
\noindent
\underline{Convergence criterion in physics notations.}  
Let $f$ be a measurable function which is integrable on any compact set of $\mathbb R$ and set let us define $F(x)=\int_0^x f(t) dt$ as well as $G(x)=-2 x f(x)+F(x)$. We assume
\[
i) \quad F(x)=\text{O}\left(e^{\kappa x^2}\right), \; \text{for some}\;  0<\kappa<1, \quad 
ii) \quad \int_{-\infty}^{\infty}e^{-x^2} G^2(x) dx<\infty.
\]
Then, setting 
\[
c_n:=\frac{1}{2^n n! \sqrt{\pi}}\int_{-\infty}^\infty e^{-x^2} F(x) h_n(x)dx,
\]
the series $\sum_{n=0}^\infty c_n h_n(x)$ converges uniformly on each compact set of $\mathbb R$ towards $F(x)$. 
\par 
\smallskip
\noindent
Let us now translate the above condition using our notations. We may write
\begin{eqnarray*}
F(x)&=&\sum_n c_n h_n(x) =\sum_n \left( \frac{1}{2^n n! \sqrt{\pi}}\int_{-\infty}^\infty e^{-u^2} F(x) 2^{n/2} H_n(\sqrt{2}u)du\right) 2^{n/2} H_n(\sqrt{2}x)\\
&=&\sum_n \frac{1}{n!} \left(\int_{-\infty}^\infty F\left(\frac{u}{\sqrt{2}}\right)H_n(u) \gamma(du)\right)H_n(\sqrt{2}x).
\end{eqnarray*}
Then, making the identification $\phi(x)=F(x/\sqrt{2})$, i.e. $F(x)=\phi(\sqrt{2} x)$, we have  naturally 
$F'(y)=\sqrt{2} \phi'(\sqrt{2} y)$ so that $G(x)=-2 x F'(y) +F(x) = -2 x \sqrt{2} \phi'(\sqrt{2} x) + \phi(\sqrt{2} x)$ and \par 
\[\begin{array}{ll}
\displaystyle{\int G^2(x) e^{-x^2} dx} & =  
\displaystyle{\int  \left(-2 x \sqrt{2} \phi'(\sqrt{2} x) + \phi(\sqrt{2} x)\right)^2 e^{-x^2} dx}\\
& \displaystyle{= \sqrt{\pi}\int \left(-2 x \phi'(x) + \phi(x)\right)^2 \gamma(dx)}.
\end{array}
\] 
Therefore, the above criterion translates as follows.
\par 
\smallskip
\noindent
\underline{Convergence criterion in probabilistic notations.} Let $\phi$ a real function such that 
\[
i) \quad \phi(x)=\text{O}\left(e^{\kappa \frac{x^2}{2}}\right), \; \text{for}\;  0<\kappa<1, \qquad 
ii) \;\;
\int_{-\infty}^{\infty}\left(\phi(x)-2{}{x} \phi'(x)\right)^2 \gamma(dx)<\infty.
\]
Then, the Hermite series $\sum_{k=0}^\infty c_k(\phi) H_k(x)$ converges uniformly on each compact set of the real line towards $\phi(x)$. 
\par 
\smallskip
\noindent
Naturally, the first point above coincides with the first point in condition $(\star)$. Otherwise, 
using that $H_k'(x)=k H_{k-1}(x)$ and $x H_{k-1}(x) = H_k(x)+(k-1)H_{k-1}(x)$, the Hermite expansion of $\phi(x)-2x \phi'(x)$ is given by
\[
\phi(x)-2x \phi'(x)=\sum_k \left[ c_{k}(\phi) -2 \left( k  c_{k}(\phi) +(k+2)k+1)c_{k+2}(\phi) \right)\right] H_k(x),
\]
and under the condition $(\star)$, following Remark \ref{rem.star}, we have indeed 
\[
\sum_k k! \left[ c_{k}(\phi) -2 \left( k  c_{k}(\phi) +(k+2)(k+1)c_{k+2}(\phi) \right)\right]^2<+\infty.
\]
As a consequence, condition $(\star)$ indeed implies the uniform convergence of the Hermite expansion on each compact set and therefore the continuity of $\phi$. 
It is now sufficient to notice that $S_n(x)$ is a trigonometric function of $x$ of degree $n$ and then is bounded. As a result, for any fixed integer $n$ and the coefficients $\{a_k,b_k\}_{k\ge 1}$ being fixed, $\sum_{k=0}^N c_k(\phi) H_k (S_n(x))$ converges uniformly on $[-\pi,\pi]$ towards $\phi(S_n(x))$ as $N$ goes to infinity. Integrating with respect to the Lebesgue measure on $[-\pi,\pi]$ gives the desired conclusion.
\end{proof}
\subsection{Newton--Girard formula: the link with Hermite polynomials}\label{sec.combi}
This section contains the combinatoric material which is necessary to establish the main Theorem \ref{Main-pas-Gaussian}. More precisely, we shall relate Hermite polynomials evaluated at  $S_n(X)$ with some homogeneous polynomials without diagonal terms in the coefficients $\{a_k,b_k\}_{k\geq 1}$, the latter being easier to manipulate in view of establishing a central limit Theorem. We first recall below the explicit expression of Hermite polynomials
\[
\forall p\ge 0,\quad \, H_p(x) = p ! \sum_{m=0}^{\lfloor p/2\rfloor} \frac{(-1)^m x^{p-2m}}{(p-2m) ! m ! 2^{m}}.
\]
Then, we fix the following notations for the so-called \textit{Newton sums}
\[
\forall p\ge 0,\,\quad  N_{n,p}(X):=\frac{1}{n^{p/2}}\sum_{k=1}^n (a_k \cos(kX)+b_k \sin(kX))^p,
\]
\[
\forall p \ge 2,\quad \, e_{n,p}(X) := \frac{1}{n^{p/2}} \sum_{\substack{k_1 \ldots, k_p \in [|1, n|] \\ k_1 < \ldots < k_p}} \prod_{i=1}^p \left( a_{k_i} \cos(k_i X)+b_{k_i} \sin(k_i X)\right),
\]
and we adopt the conventions $e_{n,1}(X)=N_{n,1}(X)$ and $e_{n,0}(X)=1$. In particular, with the notations of the introduction, we have naturally $N_{n,1}(X)=S_n(X)$. The so-called Newton--Girard formula relates symmetric polynomials with the Newton sums, see e.g. Equation (2.14') in \cite{macdo}. Namely,  for all $p\ge 1$, we have

\begin{equation}\label{Newton-Girard}
e_{n,p}(X) = (-1)^p \sum_{\substack{ m_1+2 m_2+\ldots+p m_p = p \\ m_1, \ldots, m_p \geq 0}} \prod_{j=1}^p\frac{\left (-N_{n,j}(X)\right)^{m_j}}{m_j ! j^{m_j}}.
\end{equation}
A key step in our approach is to decompose the aforementioned formula according to the positivity of $m_j$ for $j\geq 3$. Namely, one may write
\[
e_{n,p}(X) =M_{n,p}(X) + R_{n,p}(X),
\]
with the convention that $R_{n,0}(X)=R_{n,1}(X)=R_{n,2}(X)=0$ and 
\begin{eqnarray}
\forall p\ge 0,\quad M_{n,p}(X) & := \displaystyle{(-1)^p \sum_{\substack{ m_1+2 m_2 = p \\ m_1, m_2 \geq 0}} \prod_{j=1}^p\frac{\left (-N_{n,j}(X)\right)^{m_j}}{m_j ! j^{m_j}}},\\
\forall p\ge 3,\quad R_{n,p}(X) & := \displaystyle{(-1)^p \sum_{\substack{  m_1+2 m_2+\ldots+p m_p = p \\ m_1, \ldots, m_p \geq 0 \\
\exists j \geq 3, \, m_j>0}} \prod_{j=1}^p\frac{\left (-N_{n,j}(X)\right)^{m_j}}{m_j ! j^{m_j}}} \label{eq.Rn}.
\end{eqnarray}
Note that $M_{n,p}(X)$ can itself be rewritten as
\[
\begin{array}{ll}
M_{n,p}(X) & = \displaystyle{(-1)^p \sum_{m=0}^{\lfloor p/2\rfloor} \frac{\left (-N_{n,1}(X)\right)^{p-2m} \left (-N_{n,2}(X)\right)^{m}  }{(p-2m) ! m ! 2^{m}}} \\
\\
& = \displaystyle{ \sum_{m=0}^{\lfloor p/2\rfloor} \frac{(-1)^m \left(N_{n,1}(X)\right)^{p-2m} \left [\left( N_{n,2}(X)-1+1\right)^{m} \right] }{(p-2m) ! m ! 2^{m}}} \\
\\
& = \displaystyle{ \sum_{m=0}^{\lfloor p/2\rfloor} \frac{(-1)^m \left(N_{n,1}(X)\right)^{p-2m} }{(p-2m) ! m ! 2^{m}}\left [\sum_{k=0}^m \binom{m}{k}\left(N_{n,2}(X)-1\right)^k \right] } \\
\\
& = \displaystyle{ \sum_{k=0}^{\lfloor p/2\rfloor}\sum_{m=k}^{\lfloor p/2\rfloor} \frac{(-1)^m \left(N_{n,1}(X)\right)^{p-2m} }{(p-2m) ! k ! (m-k)! 2^{m}} \left(N_{n,2}(X)-1\right)^k } \\
\\
& = \displaystyle{ \sum_{k=0}^{\lfloor p/2\rfloor} \frac{(-1)^k}{k!2^k} \left(N_{n,2}(X)-1\right)^k\sum_{m=k}^{\lfloor p/2\rfloor} \frac{(-1)^{m-k} \left(N_{n,1}(X)\right)^{p-2k-2(m-k)} }{(p-2k-2(m-k)) ! (m-k)! 2^{m-k}} } \\
\\
& \stackrel{m\to m-k}{=} \displaystyle{ \sum_{k=0}^{\lfloor p/2\rfloor} \frac{(-1)^k}{k!2^k} \left(N_{n,2}(X)-1\right)^k\sum_{m=0}^{\lfloor (p-2k)/2\rfloor} \frac{(-1)^{m} \left(N_{n,1}(X)\right)^{p-2k-2m} }{(p-2k-2m) ! m! 2^{m}} } \\
\\
& = \displaystyle{ \sum_{k=0}^{\lfloor p/2\rfloor} \frac{(-1)^k}{k!2^k} \left(N_{n,2}(X)-1\right)^k \frac{H_{p-2k}(N_{n,1}(X))}{(p-2k)!} } .
\end{array}
\]
In other words, we have for every $p\ge 0$,
\begin{equation}\label{eq:premagic}
 \frac{H_{p}(N_{n,1}(X))}{p!} =e_{n,p}(X) - \underbrace{\sum_{k=1}^{\lfloor p/2\rfloor} \frac{\left(1-N_{n,2}(X)\right)^k}{k!2^k}  \frac{H_{p-2k}(N_{n,1}(X))}{(p-2k)!}}_{=0\,\text{if}\,p=0}-R_{n,p}(X).
\end{equation}
\noindent
Then, interchanging the role played by $H_{k}(N_{n,1}(X))$ and $e_{n,k}(X)-R_{n,k}(X)$ in the last Equation \eqref{eq:premagic}, leads to the following key formula.

\begin{prop}\label{Magic-formula}
For every $p\ge 1$ we have the relation
\begin{equation}\label{Magic-eq}
\frac{H_{p}\left(N_{n,1}(X)\right)}{p!}=\sum_{k=0}^{\lfloor \frac p 2\rfloor}\frac{(-1)^k (1-N_{n,2}(X))^k}{2^k k!}\left(e_{n,p-2k}(X)-R_{n,p-2k}(X)\right).
\end{equation}
\end{prop}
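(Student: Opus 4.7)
The plan is to derive the Magic formula by formally inverting the identity \eqref{eq:premagic}, which has already been established in the discussion preceding the statement. I would first lighten the notation by setting $\alpha := (1-N_{n,2}(X))/2$, $u_p := H_p(N_{n,1}(X))/p!$ and $v_p := e_{n,p}(X)-R_{n,p}(X)$, with the convention $u_0=v_0=1$. In these variables, \eqref{eq:premagic} rearranges to
\[
v_p \;=\; \sum_{k=0}^{\lfloor p/2 \rfloor} \frac{\alpha^k}{k!}\, u_{p-2k}, \qquad p \geq 0,
\]
while the statement to be proved reads
\[
u_p \;=\; \sum_{k=0}^{\lfloor p/2 \rfloor} \frac{(-\alpha)^k}{k!}\, v_{p-2k}, \qquad p \geq 0.
\]

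My preferred route is via formal generating functions. Working in a formal power series ring in the indeterminate $z$, with coefficients taken in the commutative algebra generated by $N_{n,1}(X)$ and $N_{n,2}(X)$ (so that the manipulation is purely algebraic and sidesteps any convergence issue), I would introduce $U(z)=\sum_{j\geq 0}u_j z^j$ and $V(z)=\sum_{j\geq 0}v_j z^j$. Since $\sum_{k\geq 0}\alpha^k z^{2k}/k!=\exp(\alpha z^2)$, the first identity above is exactly the Cauchy product relation $V(z)=\exp(\alpha z^2)\,U(z)$. Multiplying both sides by $\exp(-\alpha z^2)$ then yields $U(z)=\exp(-\alpha z^2)\,V(z)$, and reading off the coefficient of $z^p$ on each side produces precisely the Magic formula.

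An equivalent combinatorial alternative, avoiding generating functions, is strong induction on $p$: using \eqref{eq:premagic} to isolate $u_p$, substituting the inductive hypothesis for the lower order terms $u_{p-2k}$ with $k\geq 1$, switching the order of the resulting double sum through the change of index $m=k+j$, and collapsing the inner sum via the binomial identity $\sum_{k=0}^{m}\binom{m}{k}(-1)^{m-k}=0$ for $m\geq 1$, recovers exactly the claimed expression. I expect the main, and in fact essentially only, obstacle to be pure bookkeeping: keeping careful track of the summation ranges involving $\lfloor p/2 \rfloor$, the parity of the surviving indices, and the sign $(-1)^{m+1}$ that is left after the truncated binomial sum. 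Once \eqref{eq:premagic} is granted, nothing else of substance is required.
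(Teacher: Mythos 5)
Your proposal is correct and is essentially the paper's own argument: the paper also starts from \eqref{eq:premagic}, writes it as a convolution identity $a\ast c(t)=b$ with $c_k(t)=t^k/k!$ (separately for even and odd indices), and inverts it using $c(t)\ast c(-t)=\delta_0$, which is exactly your relation $e^{\alpha z^2}e^{-\alpha z^2}=1$ read off coefficientwise. Your single generating function in $z$, with $z^2$ tracking the shift by $2k$, merely packages the two parity cases at once, which is a minor streamlining of the same inversion.
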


\begin{proof}[Proof of Proposition \ref{Magic-formula}]
Let us first assume that $p$ is an even integer, namely $p=2q$ with $q\ge 2$. If we formally introduce the notations $t=\frac{1-N_{n,2}(X)}{2}$ and for all $k \geq 0$
\[
a_k:=\frac{H_{2k}\left(N_{n,1}(X)\right)}{k!}, \quad b_k:=e_{n,2k}(X)-R_{n,2k}(X), \quad 
c_k(t):=\frac{t^k}{k!},
\]
then Equation \eqref{eq:premagic} reads as follows, for all $q \geq 1$
\begin{eqnarray*}
a_q=b_q-\sum_{k=1}^q \frac{t^k}{k!} a_{q-k},
\end{eqnarray*}
which in turn implies that for all $q\ge 1$
\[
a_q+\sum_{k=1}^{q}\frac{t^k}{k!} a_{q-k}= \sum_{k=0}^q\frac{t^k}{k!} a_{q-k}=\left[a\ast c(t)\right]_q=b_q.
\]
Since our conventions entail that $b_0=a_0 c_0=1$ we have $a\ast c(t)=b$ which can be reversed so that $a=b\ast c(-t)$. As a result, for all $q\ge 1$ we get indeed 
\[
\frac{H_{2q}\left(N_{n,1}(X)\right)}{(2q)!}=\sum_{k=0}^q \frac{(-1)^k (1-N_{n,2}(X))^k}{2^k k!}\left(e_{n,(2q-2k)}(X)-R_{n,(2q-2k)}(X)\right).
\]
The case of odd integers can be managed exactly in the same way. Precisely, if we set 
\[ 
\tilde{a}_k:=\frac{H_{2k+1}\left(N_{n,1}(X)\right)}{k!}, \quad \tilde{b}_k=e_{n,2k+1}(X)-R_{n,2k+1}(X),
\]
then Equation \eqref{eq:premagic} becomes this time, for every $q\ge 0$
\begin{eqnarray*}
\tilde{a}_q=\tilde{b}_q-\sum_{k=1}^q \frac{t^k}{k!} \tilde{a}_{q-k},
\end{eqnarray*}
which in turn implies for all $q\ge 0$ that $\tilde{a} \ast c(t)=\tilde{b}(t)$. Reversing again the relations we naturally obtain for every $q\ge 0$ that
\[
\frac{H_{2q+1}\left(N_{n,1}(X)\right)}{(2q+1)!}=\sum_{k=0}^q \frac{(-1)^k (1-N_{n,2}(X))^k}{2^k k!}\left(e_{n,2(q-k)+1}(X)-R_{n,2(q-k)+1}(X)\right).
\]
\end{proof}
\noindent
Let us introduce the following quantity, which will play an important role in the sequel
\begin{equation}\label{eq.rnp}
\mathcal{R}_{n,p}:=\mathbb{E}_X\left[-R_{n,p}(X)+\sum_{k=1}^{\lfloor \frac p 2\rfloor}\frac{(-1)^k (1-N_{n,2}(X))^k}{k!}\left(e_{n,p-2k}(X)-R_{n,p-2k}(X)\right)\right].
\end{equation}
Then, taking the expectation with respect to $\mathbb E_X$ in Equation \eqref{Magic-eq} yields the key decomposition 
\begin{equation}\label{eq.magic2}
\frac{1}{p!} \times \mathbb E_X \left[ H_p(S_n(X)) \right] = \mathbb E_X \left[ e_{n,p}(X) \right] + \mathcal{R}_{n,p}.
\end{equation}
As we shall see in the next section, the term $\mathcal{R}_{n,p}$ can be thought as a remainder term as $n$ goes to infinity, so that Equation \eqref{eq.magic2} in fact allows to reduce the study of the asymptotics of $\sqrt{n} \, \mathbb E_X \left[ H_p(S_n(X)) \right] $ to the one of $\sqrt{n} \, \mathbb E_X \left[ e_{n,p}(X) \right]$. The advantage and the pertinence of this approach is that there exists universality results for homogeneous polynomials that allow to establish central limit Theorems outside the Gaussian context. In some sense, the last Equation \eqref{eq.magic2} will enable us to extend Gaussian techniques used in the proof of Theorem \ref{Main-Gaussian} to the non-Gaussian framework.

\subsection{Some technical estimates}\label{sec.esti}

As announced just above, the goal of this section is to establish that, as $n$ goes to infinity, the term $\mathcal{R}_{n,p}$ in the last Equation \eqref{eq.magic2} goes to zero in a suitable sense, keeping track of the dependence in the parameter $p$. To facilitate the reading of the paper, the (rather technical) proofs of all the results stated in this section are postponed in the last Section \ref{sec.tech} of the article. \par
\medskip
The first  lemma below deals with an $L^2$ estimate with respect to $\mathbb{P}$ of terms of the form $\mathbb{E}_X\left[e_{n,p}(X)(1-N_{n,2}(X))\right]$ which appear in the definition of $\mathcal R_{n,p}$ in Equation \eqref{eq.rnp}. Namely, we have the following upper bound.

\begin{lem}\label{lem.reste0}
There exists a positive constant $C$ such that, as $n$ goes to infinity,
\[
n \, \mathbb E\left[ \mathbb E_X\left[ e_{n,p}(X) (1-N_{n,2}(X)) \right]^2\right] \leq\frac{C}{n (p-1)!}.
\]
\end{lem}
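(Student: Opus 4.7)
I would begin by polarizing the squared expectation into a double integral over independent copies $X, Y \sim \mathbb{P}_X$:
\begin{equation*}
\mathbb{E}\bigl[\mathbb{E}_X[e_{n,p}(X)(1-N_{n,2}(X))]^2\bigr] = \mathbb{E}_{X,Y}\,\mathbb{E}\bigl[e_{n,p}(X)e_{n,p}(Y)(1-N_{n,2}(X))(1-N_{n,2}(Y))\bigr].
\end{equation*}
Writing $1 - N_{n,2}(X) = -\frac{1}{n}\sum_{\ell=1}^n Z_\ell(X)$ with $Z_\ell(X) := Y_\ell(X)^2 - 1$ and $Y_\ell(X) := a_\ell\cos(\ell X)+b_\ell\sin(\ell X)$ reduces the estimate to controlling $\frac{1}{n^2}\sum_{\ell,m=1}^n \mathbb{E}_{X,Y}\mathbb{E}[e_{n,p}(X)e_{n,p}(Y)Z_\ell(X)Z_m(Y)]$. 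Each $Z_\ell$ depends only on the pair $(a_\ell, b_\ell)$, satisfies the centering $\mathbb{E}[Z_\ell(X)\mid X] = 0$, and admits the explicit trigonometric expansion $Z_\ell(X) = \tfrac{a_\ell^2+b_\ell^2-2}{2} + \tfrac{a_\ell^2-b_\ell^2}{2}\cos(2\ell X) + a_\ell b_\ell \sin(2\ell X)$.

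\textbf{Step 2: Parity analysis.} I would then expand $e_{n,p}(X)e_{n,p}(Y)$ as a double sum over ordered $p$-subsets $I, J \subset \{1, \ldots, n\}$ and factorize the $\mathbb{P}$-expectation over $k$ by independence of the pairs $(a_k, b_k)$. The joint symmetry $(a_k, b_k)\mapsto(-a_k, -b_k)$ forces the total degree in $(a_k, b_k)$ at each $k$ to be even; combined with the mean-zero property of $Z_\ell$, this leaves exactly three families of surviving configurations: the \emph{off-diagonal} $\ell = m \notin I = J$, with integrand proportional to $\mathbb{E}[Z_\ell(X)Z_\ell(Y)]\prod_{k\in I}\cos(k(X-Y))$; the \emph{diagonal} $\ell = m \in I = J$, with integrand $\mathbb{E}[Y_\ell(X)Y_\ell(Y)Z_\ell(X)Z_\ell(Y)]\prod_{k\in I \setminus\{\ell\}}\cos(k(X-Y))$; and the \emph{cross} family $\ell \neq m$ with $\ell, m \in I = J$, whose integrand is the product of two factors of the form $2\cos(\ell(X-Y))$ plus a non-universal correction proportional to $\mathbb{E}[a_1^4] - 3$, multiplied by $\prod_{k \in I \setminus\{\ell, m\}}\cos(k(X-Y))$. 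Each of these $\mathbb{P}$-factors is uniformly bounded by virtue of the assumed finite sixth moment.

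\textbf{Step 3: Arithmetic estimate and conclusion.} The decisive analytic input is the arithmetic bound
\begin{equation*}
\sum_{|I|=q}\mathbb{E}_{X,Y}\Bigl[\prod_{k\in I}\cos(k(X-Y))\Bigr] = \frac{1}{2^q}\,\#\bigl\{(I,\epsilon)\,:\, |I|=q,\, \epsilon \in \{\pm 1\}^q,\, \textstyle\sum_{k \in I}\epsilon_k k = 0\bigr\} = O\Bigl(\frac{n^{q-1}}{q!}\Bigr),
\end{equation*}
obtained by reinterpreting the count as the number of pairs of disjoint subsets $(A, B)\subset\{1,\ldots,n\}$ with $|A|+|B|=q$ and $\sum A = \sum B$; the single linear constraint $\sum A=\sum B$ reduces the dimension by one relative to the trivial count $\binom{n}{|A|}\binom{n}{|B|}/q!\sim n^q/q!$. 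Applying this with $q=p$, $p-1$, $p-2$ across the three families above, and accounting for the sums $\sum_\ell$ (yielding a factor $n$ in the off-diagonal and diagonal families) or $\sum_{\ell,m}$ restricted to $\ell,m\in I$ (yielding the combinatorial factor $p(p-1)$ in the cross family), each family contributes at most $O\bigl(\tfrac{1}{n^2(p-1)!}\bigr)$. Multiplying by $n$ then yields the claimed bound $\tfrac{C}{n(p-1)!}$.

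\textbf{Main obstacle.} The most delicate point is the bookkeeping in Step~2, in particular the precise enumeration of the surviving configurations together with their combinatorial weights and the non-universal corrections proportional to $\mathbb{E}[a_1^4]-3$, and the arithmetic estimate of Step~3 which converts the trivial uniform bound $O(1)$ on $\mathbb{E}_Z\bigl[\prod_{k\in I}\cos(kZ)\bigr]$ into the sharper averaged bound $O(n^{q-1}/q!)$; it is precisely this saving that is responsible for the appearance of the factor $\frac{1}{(p-1)!}$ in the final estimate.
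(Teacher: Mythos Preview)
Your approach coincides with the paper's: polarize with an independent copy $Y$, use the sign symmetry $(a_k,b_k)\mapsto(-a_k,-b_k)$ to force $I=J$, sort the surviving configurations by the position of $\ell,m$ relative to $I$, and conclude via the arithmetic count of sign patterns satisfying $\sum\epsilon_k k=0$. The paper packages the reduction as a separate diagonalization lemma and then computes $\mathbb E_{\mathbb P}[A_{\mathbf k}(X)A_{\mathbf k}(Y)]$ explicitly in the cases $k_0\notin I$ and $k_0\in I$, which are exactly your off-diagonal and diagonal families.

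Two small corrections. First, the arithmetic estimate in Step~3 should read $O\bigl(n^{q-1}/(q-1)!\bigr)$ rather than $O\bigl(n^{q-1}/q!\bigr)$: fixing $k_1<\dots<k_{q-1}$ and all $q$ signs leaves at most one admissible $k_q$, so the count of pairs $(I,\epsilon)$ is at most $2^q\binom{n}{q-1}$, and after dividing by $2^q$ you get $\binom{n}{q-1}\le n^{q-1}/(q-1)!$; your heuristic ``one linear constraint drops the dimension by one'' loses track of the factorial. Second, your cross family ($\ell\neq m$, both in $I=J$) --- which the paper's diagonalization lemma in fact claims vanishes --- does survive as you say, since $\mathbb E\bigl[Y_\ell(X)Y_\ell(Y)Z_\ell(X)\bigr]=(\mathbb E[a_1^4]-1)\cos(\ell(X-Y))+\tfrac{3-\mathbb E[a_1^4]}{2}\sin(2\ell X)\sin(\ell(X+Y))$ is not identically zero; but its crude contribution is $O\bigl(1/(n^2(p-2)!)\bigr)$, not $O\bigl(1/(n^2(p-1)!)\bigr)$ as you write. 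The extra factor $p-1$ is harmless for the only downstream use of this lemma (the remainder estimate for $A_{n,Q}$), where one only needs $\sum_p p!\,|c_p(\phi)|/\sqrt{(p-1)!}<\infty$ or the analogous series with $(p-2)!$, both of which converge under condition~$(\star)$.
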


The next lemma allows to treat, in a $\mathbb P-$almost sure sense, the analogue quantities with higher powers of $(1-N_{n,2}(X))$. Again, we search for estimate which behaves nicely in function of both variables $n$ and $p$.

\begin{lem}\label{lem.reste1}
There exists a positive constant $C(\omega)$ such that for every $j\ge 2$ and every $p,n\ge 1$ we have $\mathbb{P}-$almost surely
\[
\sqrt{n}\left|\mathbb{E}_X\left[\left(1-N_{n,2}(X)\right)^j e_{n,p}(X)\right]\right|\le C(\omega)^{j}\sqrt{\log(n)}^j \frac{n^{\frac{7}{8}-\frac{j}{2}}}{p!^{\frac 1 4}}.
\]
\end{lem}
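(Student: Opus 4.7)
The plan is to decouple the two factors by first extracting the $\mathbb{P}$-almost sure smallness of $1-N_{n,2}$ in sup norm, and then controlling $\mathbb{E}_X[e_{n,p}^2]$ uniformly in $(n,p)$ via a Markov--Borel-Cantelli argument at the level of its second $\mathbb{P}$-moment. For the first piece, one expands the square to write
\[
1-N_{n,2}(X)=\tfrac{1}{2n}\sum_{k=1}^n(2-a_k^2-b_k^2)-\tfrac{1}{n}\sum_{k=1}^n\tfrac{a_k^2-b_k^2}{2}\cos(2kX)-\tfrac{1}{n}\sum_{k=1}^n a_kb_k\sin(2kX),
\]
which is a scalar plus two random trigonometric polynomials of degree $2n$ whose coefficients are i.i.d., centered, with variance finite thanks to the fourth moment hypothesis. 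The law of the iterated logarithm handles the scalar part, while the classical Salem--Zygmund sup-norm estimate takes care of the two trigonometric polynomials, yielding $\|1-N_{n,2}\|_\infty\le C_1(\omega)\sqrt{\log(n)/n}$ $\mathbb{P}$-a.s. Factoring this out and applying Cauchy--Schwarz in $X$ gives
\[
\left|\mathbb{E}_X\!\left[(1-N_{n,2})^j e_{n,p}\right]\right|\le C_1(\omega)^j\left(\tfrac{\log n}{n}\right)^{j/2}\mathbb{E}_X[e_{n,p}^2]^{1/2}.
\]

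The crux is then to show that almost surely there exists $C_2(\omega)$ with $\mathbb{E}_X[e_{n,p}^2]\le C_2(\omega)\,n^{3/4}/p!^{1/2}$ for every $n,p\ge 1$. The first-moment estimate $\mathbb{E}\bigl[\mathbb{E}_X[e_{n,p}^2]\bigr]=\binom{n}{p}/n^p\le 1/p!$ (obtained from orthogonality and independence) is too weak, since the Markov tail is only of order $1/n$ and thus not summable. I would instead work with the second $\mathbb{P}$-moment. By Jensen in $X$,
\[
\mathbb{E}\bigl[\mathbb{E}_X[e_{n,p}^2]^2\bigr]\le\mathbb{E}_X\bigl[\mathbb{E}[e_{n,p}(X)^4]\bigr].
\]
For each fixed $X$, $e_{n,p}(X)$ is a homogeneous tetrahedral multilinear polynomial of degree $p$ in the independent symmetric variables $W_k(X):=a_k\cos(kX)+b_k\sin(kX)$, which satisfy $\mathbb{E}[W_k(X)^2]=1$ and $\mathbb{E}[W_k(X)^4]\le M$ uniformly in $k$ and $X$ (by the fourth moment assumption on $a_k,b_k$). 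A Bonami--Beckner type hypercontractive inequality for multilinear chaos in symmetric variables with bounded $L^4/L^2$ ratio then provides a constant $C$ independent of $X$ with $\mathbb{E}[e_{n,p}(X)^4]\le C^p\,\mathbb{E}[e_{n,p}(X)^2]^2\le C^p/p!^2$, and hence $\mathbb{E}\bigl[\mathbb{E}_X[e_{n,p}^2]^2\bigr]\le C^p/p!^2$. Markov's inequality applied at the threshold $n^{3/4}/p!^{1/2}$ then yields
\[
\mathbb{P}\bigl(\mathbb{E}_X[e_{n,p}^2]>n^{3/4}/p!^{1/2}\bigr)\le C^p/\bigl(p!\,n^{3/2}\bigr),
\]
a quantity summable over the pairs $(n,p)$. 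Borel--Cantelli delivers the required $C_2(\omega)$, and plugging back together with a choice of $C(\omega)$ large enough so that $C_1(\omega)^j\sqrt{C_2(\omega)}\le C(\omega)^j$ for every $j\ge 2$ produces exactly the claimed bound.

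The main obstacle is the uniform-in-$X$ $L^4$-$L^2$ hypercontractive comparison for the chaos $e_{n,p}(X)$: the comparison constant at degree $p$ must grow no faster than $C^p$, lest it destroy the factorial gain $1/p!^2$ that drives the Borel--Cantelli summability over $p$. This can be achieved either by a Bonami--Beckner type argument (extending from Rademacher to symmetric variables with bounded $L^4/L^2$) or by a direct combinatorial expansion of $\mathbb{E}[e_{n,p}(X)^4]$ over the multiplicity profiles $(q_2,q_4)$ satisfying $q_2+2q_4=2p$, using the uniform bound on $\mathbb{E}[W_k(X)^4]$ and the constraint that each of the four copies contains exactly $p$ indices.
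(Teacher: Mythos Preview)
Your proposal is correct and follows essentially the same route as the paper: the sup-norm bound $\|1-N_{n,2}\|_\infty\le C(\omega)\sqrt{\log n/n}$ via the law of the iterated logarithm plus Salem--Zygmund, the hypercontractive estimate $\mathbb{E}[e_{n,p}(X)^4]\le C^p/p!^2$ uniformly in $X$, and a Markov--Borel--Cantelli argument to extract the almost sure bound $n^{3/8}/p!^{1/4}$. The only cosmetic differences are that the paper pulls out $\mathbb{E}_X[|e_{n,p}(X)|]$ by the triangle inequality (rather than $\mathbb{E}_X[e_{n,p}^2]^{1/2}$ via Cauchy--Schwarz) and applies Markov at the fourth moment to the sum $\sum_p p!^{1/4}\mathbb{E}_X[|e_{n,p}|]$, running Borel--Cantelli over $n$ only, whereas you apply Markov at the second moment for each pair $(n,p)$ and sum over both indices; either packaging yields the same exponent.
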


Finally, the third lemma below provides an almost sure estimation of the quantity $|R_{n,p}(X)|$ for which it appears crucial to track the dependency with respect to the parameter $p$. In the statement below, we separate small values of $p$ from large ones since the arguments in the proof differ. Note that the hypothesis that the sixth moment of the random coefficient is finite plays an important role here.

\begin{lem}\label{bound.rnp}
There exists a positive constant $C(\omega)$ such that for $p\in\{3,4,5,6\}$ and uniformly in $X$ we have $\mathbb{P}-$almost surely
\begin{equation}\label{eq-Rn1}
\left|R_{n,p}(X)\right|\leq \frac{C(\omega){}{\log(n)^2}}{n}, 
\end{equation}
and for all $p\ge 7$
\begin{equation}\label{eq-Rn2}
\left|R_{n,p}(X)\right|\le C(\omega)~{}{\log(n)^{p/2}} \max\left( \frac{1}{n} \frac{p^7}{\left[\frac{p}{72}\right]!},\frac{1}{n^{\frac{3p}{20}}}\right).
\end{equation}
\end{lem}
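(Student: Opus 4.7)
The proof proceeds by establishing uniform-in-$X$ almost-sure bounds on each Newton sum $N_{n,j}(X)$, and then summing the contributions of the multi-indices appearing in \eqref{eq.Rn}. Since every admissible multi-index has some $m_j > 0$ with $j \geq 3$, each term in $R_{n,p}$ carries at least one factor $N_{n,j}$ with $j \geq 3$, which provides polynomial decay in $n$. The key $\mathbb{P}$-a.s. uniform estimates on the Newton sums to be established are:
\begin{itemize}
\item $|N_{n,1}(X)| \leq C(\omega)\sqrt{\log n}$, by the classical uniform Salem--Zygmund bound for random trigonometric polynomials;
\item $|N_{n,2}(X)| \leq C(\omega)$ and $|1 - N_{n,2}(X)| \leq C(\omega)\sqrt{\log(n)/n}$, by splitting the deterministic and oscillatory parts and applying Salem--Zygmund to the latter;
\item $|N_{n,3}(X)| \leq C(\omega)\sqrt{\log n}/n$, by writing $\sum_k \xi_k(X)^3$, where $\xi_k(X) = a_k\cos(kX)+b_k\sin(kX)$, as a random trigonometric polynomial of degree $\leq 3n$ with centered Fourier coefficients of variance bounded by $C\mathbb{E}[a_1^6]$, then invoking a uniform concentration estimate;
\item $|N_{n,j}(X)| \leq C(\omega)/n^{j/2-1}$ for $j \in \{4,5,6\}$, via the crude bound $|\xi_k|^j \leq (|a_k|+|b_k|)^j$ together with the law of large numbers (valid under finite $j$-th moment);
\item $|N_{n,j}(X)| \leq C(\omega)^j\, n^{-1-(j-6)/6 + \epsilon j}$ for $j \geq 7$, by combining the Borel--Cantelli estimate $\max_{k\leq n}(|a_k|+|b_k|) \leq n^{1/3+\epsilon}$ a.s. (a consequence of the sixth moment hypothesis) with $\sum_k|\xi_k|^j \leq \max_k|\xi_k|^{j-6}\sum_k|\xi_k|^6$.
\end{itemize}

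For $p \in \{3,4,5,6\}$, the set of admissible multi-indices is finite and can be enumerated directly. Each resulting term contains either a factor $N_{n,3}$ of size at most $\sqrt{\log n}/n$, or a factor $N_{n,j}$ with $j \geq 4$ of size at most $1/n$; the remaining factors $N_{n,1}^{m_1}N_{n,2}^{m_2}$ contribute at most $(\log n)^{m_1/2} \leq \log^2(n)$ since $m_1 \leq p \leq 6$. Summing the finite number of contributions yields \eqref{eq-Rn1}.

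For $p \geq 7$, one parameterizes admissible multi-indices by $r = \sum_{j\geq 3}jm_j \in \{3,\ldots,p\}$. Summing over the regular coordinates $(m_1,m_2)$ with $m_1 + 2m_2 = p - r$ via the generating function identity $\sum_{m_1,m_2\geq 0}\frac{x^{m_1}}{m_1!}\cdot\frac{(x^2/2)^{m_2}}{m_2!} = e^{x+x^2/2}$ yields a contribution at most $(\log n)^{(p-r)/2}$ times a super-exponentially decaying coefficient. For the irregular coordinates $(m_j)_{j\geq 3}$ with $\sum jm_j = r$, the product of Newton sum estimates is bounded by $C(\omega)^p n^{-r/9}$ times combinatorial factors of the form $\prod 1/(m_j! j^{m_j})$. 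Optimizing over $r$ and summing over admissible partitions then yields the two competing terms in \eqref{eq-Rn2}: the first, $\frac{p^7}{n\lfloor p/72\rfloor !}$, arises from partitions with moderate $r$ where the super-exponential decay of the combinatorial count dominates; the second, $n^{-3p/20}$, arises from partitions with large $r$ where many polynomial decay factors from the Newton sums accumulate.

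The principal difficulty lies in the large-$p$ case: the precise bookkeeping needed to balance the combinatorial growth of partitions of $p$ into parts including at least one $\geq 3$ against the polynomial decay available from each $N_{n,j}$ (crucially limited by the sixth moment hypothesis) yields the specific exponents $1/72$ and $3/20$. A secondary subtlety is establishing the uniform Salem--Zygmund bound on $|N_{n,3}(X)|$ under only six finite moments, which requires a truncation or chaining argument applied to the trigonometric polynomial representation of $\sum_k \xi_k(X)^3$.
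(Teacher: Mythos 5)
Your overall skeleton is the same as the paper's: uniform $\mathbb P$-a.s.\ bounds on the Newton sums $N_{n,j}(X)$ (Salem--Zygmund maximal inequality after symmetrization for $j=1,3$, law of large numbers for $j=2,4,5,6$, Borel--Cantelli with the sixth moment for $j\ge 7$), followed by summation over the partitions in \eqref{eq.Rn}; the treatment of $p\in\{3,4,5,6\}$ and of $j\le 6$ is essentially identical to the paper and is fine (your worry about a ``truncation or chaining argument'' for $N_{n,3}$ is unnecessary: since $a_k^3,b_k^3,a_k^2b_k,a_kb_k^2$ are symmetric, conditional Rademacher symmetrization plus the Salem--Zygmund bound and the LLN for sixth moments already give $|N_{n,3}(X)|\le C(\omega)\sqrt{\log n}/n$).

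The genuine gap is in the case $j\ge 7$ and in the final bookkeeping, i.e.\ precisely the part you flag as ``the principal difficulty'' but do not carry out. Your estimate based on $\max_{k\le n}(|a_k|+|b_k|)\le C(\omega)n^{1/3+\epsilon}$ yields at best $|N_{n,j}(X)|\le C^j n^{-1-j/6+\epsilon(j-6)}$ (your stated exponent $-1-(j-6)/6+\epsilon j=-j/6+\epsilon j$ is even weaker), and your aggregate claim $C(\omega)^p n^{-r/9}$ with $r=\sum_{j\ge3}jm_j$ cannot produce the exponent $3p/20$: for a partition with, say, a single irregular part $j\approx p/2$ and the remaining weight on $j\in\{1,2\}$, one only gets $n$-decay of order $1+p/12$ (resp.\ $p/10$ from $r/9$ with $r\approx 9p/10$), which is strictly smaller than $3p/20$ for large $p$; rescuing such cases would require additionally exploiting the factorial suppression $1/(m_1!\,m_2!\,2^{m_2})$ of the regular part, a dichotomy you never set up, and even then a per-factor decay of $j/6$ makes the constant $3/20$ borderline at best once the $\log n$ and $\epsilon$ losses are accounted for. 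The paper avoids this by using the pointwise Borel--Cantelli bound $|a_k|+|b_k|\le C(\omega)k^{1/5}$ inside the sum (rather than the maximum over $k\le n$), which gives $|N_{n,j}(X)|\le C^{j}n^{-\frac15-\frac{3j}{10}}$, i.e.\ decay $3j/10$ per large factor, and then the explicit dichotomy: if $\sum_{j\ge7}jm_j\ge p/2$ the accumulated decay is at least $3p/20$, while if not, some $i\le 6$ satisfies $im_i\ge p/12$, hence $m_i\ge p/72$ and the factor $1/m_i!\le 1/\lfloor p/72\rfloor!$, the residual sum over $m_7,\dots,m_p$ being $O(p^7)$; this is where the two terms of \eqref{eq-Rn2} actually come from. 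As written, your proposal therefore does not establish \eqref{eq-Rn2}: either sharpen the $j\ge7$ bound (e.g.\ $|a_k|\le C(\omega)k^{1/5}$, or even $k^{1/6+\epsilon}$) or supply the full partition dichotomy with the combinatorial suppression made quantitative.
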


Based on the previous estimates, we can now formally state in which sense the family of variables $(\mathcal R_{n,p})$ are negligible as $n\to\infty$. Precisely, we have the following theorem, whose detailed proof is also given in the last Section \ref{sec.tech} of the article.

\begin{theo}\label{Remainder-main}
Under the condition $(\star)$, the following sequence of random variables is well defined and tends to zero in distribution under $\mathbb P$ as $n$ goes to infinity
\begin{equation}\label{Remainder-main-eq}
\sqrt{n}\sum_{p=3}^\infty p! \, c_p(\phi) | \mathcal R_{n,p} |\xrightarrow[n\to\infty]{\text{Law}}~0.
\end{equation}
\end{theo}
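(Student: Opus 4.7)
The plan is to decompose $\mathcal R_{n,p}$ according to the summation index $k$ and the type of term ($e$ or $R$), apply the corresponding estimate from Section \ref{sec.esti} to each piece, and sum over $p$ using the summability properties furnished by condition $(\star)$ through Remark \ref{rem.star}. Since the random variable on the left of \eqref{Remainder-main-eq} is non-negative, convergence to $0$ in distribution is equivalent to convergence to $0$ in probability. Writing
\[
\mathcal R_{n,p}=A_{n,p}+\sum_{k=1}^{\lfloor p/2\rfloor}\frac{(-1)^k}{k!}\bigl(E_{n,p,k}-B_{n,p,k}\bigr)
\]
with $A_{n,p}:=-\mathbb E_X[R_{n,p}(X)]$, $E_{n,p,k}:=\mathbb E_X[(1-N_{n,2}(X))^k e_{n,p-2k}(X)]$ and $B_{n,p,k}:=\mathbb E_X[(1-N_{n,2}(X))^k R_{n,p-2k}(X)]$, I will show separately that each of the four resulting series, multiplied by $\sqrt n$, tends to zero either $\mathbb P$-almost surely or in $L^2(\mathbb P)$.

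For $A_{n,p}$, Lemma \ref{bound.rnp} provides a uniform almost sure bound: for $3\le p\le 6$ it produces a finite sum of order $\log(n)^2/\sqrt n$, while for $p\ge 7$ the factorial $\lfloor p/72\rfloor!$ and the factor $n^{-3p/20}$ appearing in \eqref{eq-Rn2} dominate the growth $p!\log(n)^{p/2}$ and yield summability through the third series of Remark \ref{rem.star}. For $E_{n,p,1}$, Lemma \ref{lem.reste0} furnishes the $L^2(\mathbb P)$-bound $\sqrt n\,\|E_{n,p,1}\|_{L^2(\mathbb P)}\le C/(\sqrt n\sqrt{(p-3)!})$, so by Minkowski's inequality
\[
\Bigl\|\sqrt n\sum_{p\ge 3}p!|c_p(\phi)||E_{n,p,1}|\Bigr\|_{L^2(\mathbb P)}\le \frac{C}{\sqrt n}\sum_{p\ge 3}\frac{p!|c_p(\phi)|}{\sqrt{(p-3)!}},
\]
and the right-hand side is a convergent series by the second estimate of Remark \ref{rem.star}, hence vanishes as $n\to\infty$.

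For $E_{n,p,k}$ with $k\ge 2$, Lemma \ref{lem.reste1} gives almost surely
\[
\sqrt n\,|E_{n,p,k}|\le C(\omega)^k\sqrt{\log n}^{\,k}\,\frac{n^{7/8-k/2}}{(p-2k)!^{1/4}},
\]
and the gain $n^{7/8-k/2}\le n^{-1/8}$ already for $k=2$, the factor $1/k!$, together with the last series of Remark \ref{rem.star}, ensure the required decay and summability (splitting the inner sum over $k$ according to whether $2k\le p/2$ or not). The terms $B_{n,p,k}$ are controlled by noting that, thanks to the inequality $(a_k\cos(kX)+b_k\sin(kX))^2\le a_k^2+b_k^2$ and the strong law of large numbers for the i.i.d. sequence $\{a_k^2+b_k^2\}$ (finite sixth moment), one has almost surely and uniformly in $X$ and $n$ the bound $|1-N_{n,2}(X)|\le K(\omega)$. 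Consequently $|B_{n,p,k}|\le K(\omega)^k\|R_{n,p-2k}\|_\infty$, and Lemma \ref{bound.rnp} is applied as in the treatment of $A_{n,p}$, the extra factors $K(\omega)^k/k!$ being absorbed by summation in $k$.

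The main obstacle will be to ensure uniformity of all these bounds in the index $p$: Lemmas \ref{lem.reste1} and \ref{bound.rnp} carry factors $\log(n)^{p/2}$, powers of $p$ and powers of $K(\omega)$ that all grow with $p$, and only the three summability statements of Remark \ref{rem.star}, deduced from $(\star)$, are precisely tailored to absorb these explosions through the factorial decay of $c_p(\phi)$. Once this uniformity is secured, each of the four contributions is dominated, either almost surely or in $L^2(\mathbb P)$, by a quantity of the form $C(\omega)(\log n)^\alpha n^{-\beta}$ with $\beta>0$, which gives \eqref{Remainder-main-eq}.
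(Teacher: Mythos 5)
Your decomposition and the treatment of three of the four pieces coincide with the paper's proof: the pure $\mathbb E_X[R_{n,p}(X)]$ term via Lemma \ref{bound.rnp}, the $k=1$ term $\mathbb E_X[(1-N_{n,2}(X))e_{n,p-2}(X)]$ in $L^2(\mathbb P)$ via Lemma \ref{lem.reste0} and Minkowski, and the $k\ge 2$ terms with $e_{n,p-2k}$ almost surely via Lemma \ref{lem.reste1} with the split $k\lessgtr \lfloor p/4\rfloor$ — this is exactly the paper's $A_{n,Q}$ and $B_{n,Q}$.

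The genuine gap is in your treatment of the mixed terms $B_{n,p,k}=\mathbb E_X[(1-N_{n,2}(X))^k R_{n,p-2k}(X)]$. You replace the quantitative bound \eqref{bound-N2}, namely $\sup_X|1-N_{n,2}(X)|\le C(\omega)\sqrt{\log(n)/n}$ (a consequence of the Salem--Zygmund law of the iterated logarithm, established inside the proof of Lemma \ref{lem.reste1}), by the mere $O_\omega(1)$ bound $|1-N_{n,2}(X)|\le K(\omega)$, and you propose to absorb $K(\omega)^k/k!$ by summing the $k$-series. This loses the factor $(\log n/n)^{k/2}$ which is precisely what makes the $p$-summation work in this piece. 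Indeed, after your bound the majorant takes the form
\[
\frac{C(\omega)}{\sqrt n}\sum_{p\ge 5} p!\,|c_p(\phi)|\,\log(n)^{(p-2)/2}\Bigl(\frac{p^7}{\lfloor (p-2)/72\rfloor !}+n^{1-\frac{3(p-2)}{20}}\Bigr),
\]
coming from the small-$k$ terms (say $k=1$, $q=p-2$), for which $1/k!$ gives no factorial gain in $p$; and since condition $(\star)$ only guarantees $\sum_p p!|c_p(\phi)|A^p<\infty$ for \emph{some} possibly small $A$ (so $p!|c_p(\phi)|$ may grow geometrically, e.g. $\phi(x)=e^{\alpha x}$ with $\alpha>1$), the factors $\log(n)^{p/2}$ and $C(\omega)^p$ are not absorbed: the resulting series is not $o(1)$, and may not even be bounded in $n$. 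The paper avoids this by combining \eqref{bound-N2} with a case distinction in $k$: if $k\ge\lfloor p/4\rfloor$, then $1/k!\le 1/\lfloor p/4\rfloor!$ restores $p$-summability (third series of Remark \ref{rem.star}) while the gain $n^{-k/2}$ kills the logarithmic powers of order $p$; if $k<\lfloor p/4\rfloor$, then $p-2k\ge p/2$ and the branch $n^{-3(p-2k)/20}\le n^{-3p/40}$ of \eqref{eq-Rn2} supplies a geometric-in-$p$ decay in $n$ which absorbs $\log(n)^{p/2}$ at the price of a universal $C^p$. You need both ingredients (the $\sqrt{\log n/n}$ bound on $1-N_{n,2}$ and this $k$-versus-$p$ dichotomy); the crude $O_\omega(1)$ bound with "summation in $k$" does not suffice.
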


%\begin{coro}\label{cor.reste0}
%Almost surely in $\mathbb P$, there exists a positive constant $C$ such that, as $n$ goes to infinity,
%\[
%\sqrt{n} \, \mathbb E_X\left[ e_{n,p}(X) (1-N_{n,2}(X)) \right]\leq \frac{C}{\sqrt{n}p!}.
%\]
%\end{coro}
%
%
%
%
%\begin{lem}\label{lem.reste1}
%There exists a positive constant $C$ such that, as $n$ goes to infinity,
%\[
%\sqrt{n} \, \mathbb E_X\left[ e_{n,p}(X) (1-N_{n,2}(X))^j \right]\leq \frac{C}{\sqrt{n}p!}.
%\]
%\end{lem}
%
%\begin{proof}[Proof of Lemma \ref{lem.reste1}]
%By Equation \eqref{eqN2} and the strong law of large numbers, there exists a constant $C=C(\omega)$ such that for $\mathbb P$ almost all $\omega$, uniformly in $X$
%\[
%| N_{n,2}(X)-1| \leq \frac{1}{n} \sum_{k=1}^n \left|\frac{a_k^2+b_k^2}{2}-1\right|+\frac{1}{n} \sum_{k=1}^n \left|\frac{a_k^2-b_k^2}{2}\right| + \frac{1}{n} \sum_{k=1}^n |a_k b_k| \leq C(\omega).
%\]
%By Cauchy--Schwarz inequality, we have then 
%\[
%\left| \sqrt{n} \, \mathbb E_X\left[ e_{n,p}(X) (1-N_{n,2}(X))^j \right]\right| \leq \sqrt{n} \, \mathbb E_X\left[ e_{n,p}(X)^2\right]^{1/2} \mathbb E_X\left[ (1-N_{n,2}(X))^{2j} \right]^{1/2}.
%\]
%\end{proof}
%
Let us now go back to the key Equation \eqref{eq.magic2} and examine how the above estimates allow to reduce the convergence of $\sqrt{n}\, \mathbb E_X[ \phi(S_n(X)]$ to the one of the homogeneous sums $(\sqrt{n} \, \mathbb E_X[ e_{n,p}(X)])_{p \geq 1}$.
Since, $H_0 \equiv 1$ and $H_1(x)=x$, we have 
\[
c_0(\phi)=\int_{\mathbb{R}}\phi(x)\gamma(dx), \quad \mathbb{E}_X\left[H_1\left(S_n(X)\right)\right]=\mathbb E_X[S_n(X)]=0,
\]
and thus, relying on Lemma \ref{decomp-gene-hermite}, we may infer that almost surely with respect to $\mathbb{P}$ we have
\[
\sqrt{n}\left(\mathbb{E}_X\left[\phi\left(S_n(X)\right)\right]-\int_{\mathbb{R}}\phi(x)\gamma(dx)\right)
=
\sqrt{n}\lim_{q\to\infty} \sum_{p=2}^q c_p(\phi)\mathbb{E}_X\left[H_p\left(S_n(X)\right)\right].
\]
Then, thanks to Proposition \ref{Magic-formula} (Newton--Girard decomposition), we can write for every $q\ge 3$ that
\[
\begin{array}{l}
\displaystyle{\sqrt{n}\sum_{p=1}^q c_p(\phi)\mathbb{E}_X\left[H_p\left(S_n(X)\right)\right]=c_2(\phi)~\sqrt{n}~\mathbb{E}_X\left[H_2\left(S_n(X)\right)\right]}\\
\\
\displaystyle{+\sqrt{n}\sum_{p=3}^q~p! c_p(\phi)\mathbb{E}_X\left[e_{n,p}(X)\right]
+\sqrt{n}~\sum_{p=3}^q p!c_p(\phi) \mathcal{R}_{n,p}}.
\end{array}
\]
Next, we use the last Theorem \ref{Remainder-main} to handle the remainder and we deduce that
\[
\lim_{q\to\infty}\sqrt{n}~\sum_{p=3}^q p!c_p(\phi) \mathcal{R}_{n,p}\xrightarrow[n\to\infty]{\text{Law}}~0.
\]
As a result, to study the limit behavior of $\sqrt{n}\, \mathbb E_X[ \phi(S_n(X)]$ as $n$ goes to infinity, 
one is then left to studying the limit in distribution of the infinite series 
\[
\sqrt{n}~c_2(\phi)\mathbb{E}_X\left[H_2\left(S_n(X)\right)\right]+\sqrt{n}\sum_{p=3}^\infty~p! c_p(\phi)\mathbb{E}_X\left[e_{n,p}(X)\right],
\]
where the last sum converges $\mathbb{P}-$almost surely. Moreover, one can even restrict to the case of a finite sum. Indeed, let us recall that 
\[
\sqrt{n}\, \mathbb{E}_X\left[e_{n,p}(X)\right] = \frac{1}{n^{p/2}} \sum_{1 \leq k_1 < \ldots <k_p\leq n} \mathbb E_X\left[  \prod_{i=1}^p\left(a_{k_i} \cos(k_i X)+b_{k_i} \sin(k_i X)\right)\right].
\]
A simple computation then yields that 
\[
\begin{array}{ll}
\displaystyle{n p!^2 \mathbb{E}\left[\mathbb{E}_X\left[e_{n,p}(X)\right]^2\right]} & = \displaystyle{n p!^2 \mathbb{E}_{\mathbb{P}, X, Y}\left[e_{n,p}(X) e_{n,p}(Y)\right]}\\
& = \displaystyle{n p!^2 \frac{1}{n^p}  \sum_{k_1<k_2<\cdots<k_p} \mathbb{E}_{X,Y}\left[\prod_{i=1}^p \cos \left( k_i\left(X-Y\right)\right)\right]}\\
& \displaystyle{=n p!^2 \frac{1}{n^p}  \sum_{k_1<k_2<\cdots<k_p} \mathbb{E}_{X}\left[\prod_{i=1}^p \cos \left( k_i X\right)\right]}\\
& \displaystyle{\le n p!^2 \frac{1}{n^p}\sum_{k_1<k_2<\cdots<k_{p-1}} 1\le n p!^2 \frac{1}{n^p} \frac{n^{p-1}}{(p-1)!}=p p!.}
\end{array}
\]
As a result, keeping in mind that for every $n\ge 1$ the random variables $\{\mathbb{E}_{{}{X}}\left[ e_{n,p}(X)\right]\}_{p\ge 1}$ form an orthogonal system we may infer that for any integer $Q \geq 1$
\[
\begin{array}{l}
\displaystyle{\mathbb{E}\left[\left(\sqrt{n}\sum_{p=Q}^\infty p! c_p(\phi) \mathbb{E}_{{}{X}}\left[e_{n,p}(X)\right]\right)^2\right] {}{=} \sum_{p=Q}^\infty n c_p^2(\phi) p!^2 {}{\mathbb{E}\left[\mathbb{E}_X\left[e_{n,p}(X)\right]^2\right]}}
\displaystyle{\le  \sum_{p=Q}^\infty c_p^2(\phi) p p!.}
\end{array}
\]
Thanks to condition $(\star)$, as noted in Remark \ref{rem.star}, the rest of the last series goes to zero as $Q$ goes to infinity. This leads in particular to
\begin{equation}\label{Juste-nbre-fini-de-termes}
\limsup_{Q\to\infty}\sup_{n\ge 1} \mathbb{E}\left[\left(\sqrt{n}\sum_{p=Q}^\infty p! c_p(\phi) \mathbb{E}_{{}{X}}\left[e_{n,p}(X)\right]\right)^2\right]=0.
\end{equation}
As a conclusion, to study the limit behavior of $\sqrt{n}\, \mathbb E_X[ \phi(S_n(X)]$ as $n$ goes to infinity, it is then sufficient to compute the limit in distribution of the finite sum
\begin{equation}\label{eq.key22}
\sqrt{n}~c_2(\phi)\mathbb{E}_X\left[H_2\left(S_n(X)\right)\right]+\sqrt{n}\sum_{p=3}^Q~p! c_p(\phi)\mathbb{E}_X\left[e_{n,p}(X)\right].
\end{equation}
The situation becomes now considerably simpler as we deal now with homogeneous, finite degree polynomial expressions in terms of the coefficients $\{a_k,b_k\}_{k\ge 1}$.

\subsection{Universality principles for homogeneous sums} \label{sec.noise}
We can now complete the proof of our second main Theorem \ref{Main-pas-Gaussian}.
The motivation behind the introduction of the combinatorics material in Section \ref{sec.combi} above, yielding to the study of the asymptotics of the last Equation \eqref{eq.key22}, is precisely that remarkable central limit Theorems and invariance principles for homogeneous polynomial expressions are already available in the literature, sometimes referred as noise stability or noise sensitivity results, see \cite{mossel2010noise,nourdin2010invariance} and the references therein.
\par
\medskip
The key observation here is that all the  variables $\sqrt{n}\left(\mathbb{E}_X\left[e_{n,p}(X)\right]\right)_{3\le p \le Q}$ are then homogeneous polynomials evaluated in the variables $\{a_k,b_k\}_{1\le k \le n}$, without diagonal terms in the precise sense of \cite[Definition 1.1]{nourdin2010invariance}. Namely, in order to stick to the notations of the latter reference, let us first remark that the whole family of random variables $\left(\sqrt{n}\, \mathbb{E}_X\left[e_{n,p}(X)\right]\right)_{n \geq 1,  p \geq 3}$ has the same distribution under $\mathbb P$ as the new family $\left(\sqrt{n}\, \mathbb{E}_X\left[\widetilde{e}_{n,p}(X)\right]\right)_{n \geq 1,  p \geq 3}$ where we have set
\[
\widetilde{e}_{n,p}(X):= \frac{1}{n^{p/2}} \sum_{1 \leq k_1 < \ldots <k_p\leq n} \prod_{i=1}^p\left(a_{2k_i} \cos(k_i X)+a_{2k_i-1} \sin(k_i X)\right).
\]
If we introduce the set 
\[
B_{2n}^p=\left \lbrace  (k_1, \ldots, k_p), \; 1\leq k_i \leq2 n,\; \begin{array}{l} \nexists i\neq j, \, k_i=k_j \\
\nexists (i,j), \; k_i \in 2\mathbb Z, \; k_j=k_i-1
\end{array}\right \rbrace,
\]
 then expanding the product, we can write alternatively
\[
\sqrt{n}\, \mathbb{E}_X\left[\widetilde{e}_{n,p}(X)\right]=\sum_{1 \leq k_1 < \ldots < k_p\leq {}{2n}} f_{2n,p}(k_1, \ldots, k_p) a_{k_1} \ldots a_{k_p},
\]
with $f_{n,p}$ being the following symmetric function vanishing on the diagonal
\[
f_{2n,p} (k_1, \ldots, k_p):= \frac{1}{n^{\frac{p-1}{2}}}  \,\mathbb E_X \left[ \prod_{\substack{i, \, k_i \in 2\mathbb Z \\ j, \, k_j \in 2\mathbb Z-1}} \cos\left( \frac{k_i}{2}X \right)  \sin\left( \frac{k_j+1}{2}X \right)\right] \mathds{1}_{(k_1, \ldots, k_p) \in B_n^p}.
\]
In order to prove a central limit Theorem for both sequences 
\[
\left(\sqrt{n}\, \mathbb{E}_X\left[\widetilde{e}_{n,p}(X)\right]\right)_{n \geq 1,  3 \leq p \leq Q} \quad \text{and} \;\; \left(\sqrt{n}\, \mathbb{E}_X\left[e_{n,p}(X)\right]\right)_{n \geq 1,  3 \leq p \leq Q}
\]
we will apply Theorem 1.2 of \cite{nourdin2010invariance}, which allows to reduce the general central limit Theorem to its Gaussian analogue.  \par
\medskip
\noindent
\underline{The Gaussian case}: suppose first that the variables $\{a_k,b_k\}_{k\ge 1}$ are independent standard Gaussian random variables.  Then, relying on Theorem \ref{Gaussian-hermite-clt} and Remark \ref{rem.jointe} above, we may infer that 
\[
\left( \sqrt{n}\, \mathbb{E}_X\left[H_p(S_n(X))\right]\right)_{3 \leq p\leq Q} \xrightarrow[n\to\infty]{\text{Law}}~\mathcal{N}\left(0,
\left(
\begin{array}{cccc}
\sigma_{3}^2 & 0&\cdots&0\\
0&\sigma_{4}^2& \ddots &0\\
\vdots&\ddots&\ddots& 0\\
0& \cdots &0 &\sigma_{Q}^2
\end{array}
\right)
\right).
\]
Now, for $p\in\{3,\cdots,Q\}$, applying Equation \eqref{eq.magic2}, we have the decomposition
\[
\sqrt{n}\, \mathbb{E}_X\left[H_p(X)\right]=\sqrt{n}\, p! \, \mathbb{E}_X\left[e_{n,p}(X)\right]+\sqrt{n}\, p! \,\mathcal{R}_{n,p}.
\]
One may then apply Theorem \ref{Remainder-main} to the particular case of $\phi(x)=H_p(x)$ which naturally fulfills the requirement $(\star)$ and we get $\sqrt{n}\, \mathcal{R}_{n,p}\to 0$ in probability. Hence, by Slutsky Lemma, one recovers (so far in the Gaussian case only) that 
\[
\left( \sqrt{n} \, p! \, \mathbb{E}_X\left[e_{n,p}(X)\right]\right)_{3\leq p\leq Q}\xrightarrow[n\to\infty]{\text{Law}}~\mathcal{N}(0,\text{Diag}(\sigma_3^2, \ldots, \sigma_Q^2))
\]
and in the same way, since the vectors have the same law under $\mathbb P$
\[
\left( \sqrt{n} \, p! \, \mathbb{E}_X\left[\widetilde{e}_{n,p}(X)\right]\right)_{3\leq p\leq Q}\xrightarrow[n\to\infty]{\text{Law}}~\mathcal{N}(0,\text{Diag}(\sigma_3^2, \ldots, \sigma_Q^2)).
\]
\noindent
\underline{The general case}: Now, let us go back to the general case of i.i.d. non-Gaussian coefficients that are centered with unit variance and a bounded sixth moment. Recall that, as computed in the end of Section \ref{sec.esti}, we have then the uniform bound 
\[
n p!^2 \mathbb{E}\left[\mathbb{E}_X\left[e_{n,p}(X)\right]^2\right] = n p!^2 \mathbb{E}\left[\mathbb{E}_X\left[\widetilde{e}_{n,p}(X)\right]^2\right] \leq p p!.
\]
Given the convergence established just above in the Gaussian framework and relying on Theorem 1.2 of \cite{nourdin2010invariance}, due to the homogeneous nature of the sums, we then automatically obtain the invariance principle
\[
\left( \sqrt{n} \, p! \, \mathbb{E}_X\left[\widetilde{e}_{n,p}(X)\right]\right)_{3\leq p\leq Q}\xrightarrow[n\to\infty]{\text{Law}}~\mathcal{N}(0,\text{Diag}(\sigma_3^2, \ldots, \sigma_Q^2)),
\]
and again since the sequences of the same law under $\mathbb P$
\[
\left( \sqrt{n} \, p! \, \mathbb{E}_X\left[e_{n,p}(X)\right]\right)_{3\leq p\leq Q}\xrightarrow[n\to\infty]{\text{Law}}~\mathcal{N}(0,\text{Diag}(\sigma_3^2, \ldots, \sigma_Q^2)).
\]
\noindent
\underline{Incorporating the second chaos component}: In view of Equation 
\eqref{eq.key22}, we are left to incorporate the contribution coming from the second chaos projection.  
First, as noted in Remark \ref{rem.first}
\begin{eqnarray*}
\sqrt{n}~c_2(\phi)\mathbb{E}_X\left[H_2\left(S_n(X)\right)\right]&=&\sqrt{n}~c_2(\phi)\mathbb{E}_X\left[\left(S_n(X)^2-1\right)\right]\\
&=&\frac{c_2(\phi)}{\sqrt{n}} \sum_{k=1}^n \left(\frac{a_k^2+b_k^2}{2}-1\right).
\end{eqnarray*}
\noindent
By the standard central limit Theorem for independent and identically distributed random variables, one then naturally deduce the convergence in distribution 
\[
\sqrt{n}~c_2(\phi)\mathbb{E}_X\left[H_2\left(S_n(X)\right)\right]
\xrightarrow[n\to\infty]~\mathcal{N}\left(0,\frac{c_2(\phi)^2}{2} \left(\mathbb{E}[a_1^4]-1\right)\right).
\]
We are now left to establish the joint convergence of the two vectors $\sqrt{n} \,\mathbb{E}_X\left[H_2\left(S_n(X)\right)\right]$ and $\left(\sqrt{n} \, p! \, \mathbb{E}_X\left[e_{n,p}(X)\right]\right)_{3\leq p\leq Q}$. To do so, it could be tempting to apply the multidimensional Theorem 7.10 of \cite{nourdin2010invariance}, but we have to pay attention here to the fact that the variables $\{a_k, b_k\}$ and $\{(a^2_k+b_k^2)/2-1\}$ are not independent, so that the result does not apply. Instead, we can adapt the original proof, making use of the standard Lindeberg's trick consisting of replacing the variables one after the other by Gaussian ones. Doing so, one gets the following proposition, whose detailed proof is given in Section \ref{sec.lindeberg}.
\begin{prop}\label{prop.joint}Under $\mathbb P$, we have the following joint  convergence in distribution as $n$ goes to infinity
\[
\left( \frac{1}{\sqrt{n}} \sum_{k=1}^n \left(\frac{a_k^2+b_k^2}{2}-1\right), \, \left(\sqrt{n} \, p! \, \mathbb{E}_X\left[e_{n,p}(X)\right]\right)_{3\leq p\leq Q}\right)\xrightarrow[n\to\infty]{\text{Law}}~\mathcal{N}(0,\Sigma),
\]
with 
\[
\Sigma:=\mathrm{Diag}\left( \frac{\mathbb E[a_1^4]-1}{2} , \sigma_3^2, \ldots, \sigma_Q^2\right).
\]
\end{prop}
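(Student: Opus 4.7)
The plan is a Lindeberg-type replacement argument, carefully engineered to preserve the second-chaos contribution. Since the variables $\{a_k,b_k\}_{k\ge 1}$ appear simultaneously in $Z_n^{(2)}:=\frac{1}{\sqrt n}\sum_{k=1}^n\bigl((a_k^2+b_k^2)/2-1\bigr)$ and in every higher-order term $\sqrt n\,p!\,\mathbb E_X[e_{n,p}(X)]$ for $p\ge 3$, a naive Gaussianisation of all coefficients would destroy the non-universal variance $(\mathbb E[a_1^4]-1)/2$ entering the limit. I would therefore introduce an independent family $\{g_k,g'_k\}_{k\ge 1}$ of standard Gaussians, globally independent of $\{a_k,b_k\}_{k\ge 1}$, and swap $(a_k,b_k)\leadsto (g_k,g'_k)$ only inside the higher-chaos vector $\tilde Z_n:=(\sqrt n\,p!\,\mathbb E_X[e_{n,p}(X)])_{3\le p\le Q}$, while keeping $Z_n^{(2)}$ as a function of the original $(a,b)$.

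The first step is Taylor expansion and telescoping. For a smooth bounded test function $\phi$ with bounded derivatives up to order five, write
\[
\mathbb E\bigl[\phi\bigl(Z_n^{(2)}(a,b),\tilde Z_n(a,b)\bigr)\bigr]-\mathbb E\bigl[\phi\bigl(Z_n^{(2)}(a,b),\tilde Z_n(g,g')\bigr)\bigr]
\]
as a telescoping sum obtained by swapping one pair $(a_k,b_k)$ at a time inside $\tilde Z_n$. Because each monomial of $e_{n,p}(X)$ involves $p$ \emph{distinct} frequencies, $\tilde Z_n$ depends \emph{linearly} on each pair: $\tilde Z_n=\tilde Z_n^{\setminus k}+W_k\,a_k+W_k'\,b_k$, with vector-valued coefficients $W_k,W_k'$ that depend on all the other variables but not on $(a_k,b_k,g_k,g'_k)$. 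Taylor expanding $\phi$ in $(a_k,b_k)$ (respectively $(g_k,g'_k)$) to order four and using that, by symmetry, the joint moments of $(a_k,b_k)$ and of $(g_k,g'_k)$ agree up to order three, all contributions of orders one, two and three cancel exactly between the two sides. The residual per-step error is controlled by a constant (depending on $\|\phi^{(4)}\|_\infty$ and on $|\mathbb E[a_1^4]-3|$) times $\mathbb E[W_k^4]+\mathbb E[W_k'^4]$, plus a fifth-order remainder bounded by $\mathbb E[|W_k|^5]+\mathbb E[|W_k'|^5]$, all finite thanks to the sixth-moment hypothesis.

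The heart of the argument — and the main obstacle — is the uniform influence estimate $\sum_{k=1}^n \mathbb E[W_k^4]+\mathbb E[W_k'^4]\to 0$. This is where the Dirichlet-kernel structure enters. A computation in the spirit of Lemma \ref{variance-Fn}, based on duplicating $X$ into an independent copy $Y$ and using the identity $\mathbb E[(a_j\cos(jX)+b_j\sin(jX))(a_j\cos(jY)+b_j\sin(jY))]=\cos(j(X-Y))$, reduces $\mathbb E[W_k^2]$ to an expression of the form
\[
\mathbb E[W_k^2]=\mathrm{const}\cdot\frac{1}{n^{p-1}}\,\mathbb E_{X,Y}\!\left[\cos(kX)\cos(kY)\sum_{\substack{T\subset[n]\setminus\{k\}\\|T|=p-1}}\prod_{j\in T}\cos(j(X-Y))\right],
\]
and orthogonality of the trigonometric system (exactly as in the proof of Lemma \ref{lem.controldn}) yields $\mathbb E[W_k^2]=\mathrm O(1/n)$ uniformly in $k$. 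Since $W_k$ is a multilinear polynomial of bounded degree $\le Q-1$ in independent variables with finite sixth moment, standard hypercontractive-type moment comparisons upgrade this to $\mathbb E[W_k^4]=\mathrm O(1/n^2)$ and $\mathbb E[|W_k|^5]=\mathrm O(n^{-5/2})$, so that the total swap error is of order $\mathrm O(1/n)$ and vanishes.

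Once the swap error is controlled, the conclusion is immediate. By independence of $(a,b)$ and $(g,g')$, the expectation $\mathbb E[\phi(Z_n^{(2)}(a,b),\tilde Z_n(g,g'))]$ factorises over the two independent $\sigma$-algebras. The classical central limit theorem applied to the i.i.d.\ summands $(a_k^2+b_k^2)/2-1$ gives $Z_n^{(2)}\to\mathcal N\bigl(0,(\mathbb E[a_1^4]-1)/2\bigr)$, while the Gaussianised block $\tilde Z_n(g,g')$ converges to $\mathcal N(0,\mathrm{Diag}(\sigma_3^2,\ldots,\sigma_Q^2))$ by combining Theorem \ref{Gaussian-hermite-clt}, Remark \ref{rem.jointe} and the identification \eqref{eq.magic2} between $H_p(S_n(X))$ and $p!\,e_{n,p}(X)$ up to a vanishing remainder. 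Applying Cramér--Wold to any linear combination then yields the announced joint convergence to the diagonal Gaussian $\mathcal N(0,\Sigma)$.
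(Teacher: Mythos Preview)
Your overall architecture is sound and is a genuine alternative to the paper's, but there is a real gap in the core cancellation step. You swap $(a_k,b_k)\leadsto(g_k,g'_k)$ only inside $\tilde Z_n$ while keeping $Z_n^{(2)}$ as a function of the original $(a,b)$. Consequently, at step $k$ the Taylor base point $\partial_2^j\phi\bigl(Z_n^{(2)},\tilde Z_n^{\setminus k}\bigr)$ still depends on $(a_k,b_k)$ through the term $\frac{1}{\sqrt n}\bigl((a_k^2+b_k^2)/2-1\bigr)$ in the first argument. Hence the second-order contribution is not simply $\mathbb E\bigl[(W_ka_k+W'_kb_k)^2\bigr]$ versus $\mathbb E\bigl[(W_kg_k+W'_kg'_k)^2\bigr]$: on the $(a,b)$ side one gets $W_k^2\,\mathbb E\bigl[\partial_2^2\phi(\cdots)\,a_k^2\bigr]$, and the covariance $\mathrm{Cov}\bigl(\partial_2^2\phi(\cdots),a_k^2\bigr)$ does not vanish. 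So the claim that ``orders one, two and three cancel exactly'' is false at order two. The argument is repairable --- that covariance is $O(n^{-1/2})$ because the first-argument perturbation is of size $n^{-1/2}$, and summed against $\sum_k\mathbb E[W_k^2]=O(1)$ the total order-two discrepancy is $O(n^{-1/2})$ --- but this extra step is essential and is missing from your write-up.

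The paper sidesteps this issue by also replacing, at step $k$, the normalised second-chaos increment $\sigma^{-1}\bigl((a_k^2+b_k^2)/2-1\bigr)$ by an \emph{independent} auxiliary Gaussian $z_k$. This symmetrises the two sides: in both the ``before'' and ``after'' configurations the index-$k$ contributions are independent of the base point, so moment matching gives exact cancellation at orders one and two (the cross term $\mathbb E\bigl[((a_k^2+b_k^2)/2-1)\tilde S_{l',k}\bigr]$ vanishes by symmetry), and a third-order remainder suffices. For that remainder the paper does not compute $\mathbb E[W_k^2]=O(1/n)$ directly as you do; it invokes the implication ``Gaussian CLT for homogeneous sums $\Rightarrow$ maximal influence $\to 0$'' from \cite[eq.~(1.10)]{nourdin2010invariance}, together with hypercontractivity to pass from $L^2$ to $L^3$. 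Your direct Dirichlet-kernel estimate is more quantitative and also works, but the paper's route is shorter and reuses the Gaussian case already established.
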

\noindent
We can now complete the proof of Theorem \ref{Main-pas-Gaussian}. By Proposition \ref{prop.joint}, we get that
\begin{eqnarray*}
&&\sqrt{n}~c_2(\phi)\mathbb{E}_X\left[H_2\left(S_n(X)\right)\right]+\sqrt{n}\sum_{p=3}^Q~p! c_p(\phi)\mathbb{E}_X\left[e_{n,p}(X)\right]\\
&&\xrightarrow[n\to\infty]~\mathcal{N}\left(0,\frac{c_2(\phi)^2}{2} \left(\mathbb{E}(a_1^4)-1\right)+\sum_{k=3}^Q c_k(\phi)^2\sigma_k^2\right).
\end{eqnarray*}
Letting $Q\to\infty$ and relying on the tail estimate \eqref{Juste-nbre-fini-de-termes}, we then obtain that 
\begin{eqnarray*}
&&\sqrt{n}~c_2(\phi)\mathbb{E}_X\left[H_2\left(S_n(X)\right)\right]+\sqrt{n}\sum_{p=3}^\infty~p! c_p(\phi)\mathbb{E}_X\left[e_{n,p}(X)\right]\\
&&\xrightarrow[n\to\infty]~\mathcal{N}\left(0,\frac{c_2(\phi)^2}{2} \left(\mathbb{E}(a_1^4)-1\right)+\sum_{k=3}^{\infty} c_k(\phi)^2 \sigma_k^2 \right).
\end{eqnarray*}
Note that, from Remark \ref{rem.first} and the expression of the limit variance in Theorem \ref{Gaussian-hermite-clt}, we have ${}{\sigma_2^2=1}$ so that 
the above variance can be indeed rewritten 
\[
\begin{array}{ll}
\displaystyle{\Sigma_\phi^2} & :=\displaystyle{\frac{c_2(\phi)^2}{2} \left(\mathbb{E}(a_1^4)-1\right)+\sum_{k=3}^{\infty} c_k(\phi)^2 \sigma_k^2 =\frac{c_2(\phi)^2}{2} \left(\mathbb{E}(a_1^4)-3\right)+\sum_{k=2}^{\infty} c_k(\phi)^2 \sigma_k^2 }\\
\\
& \displaystyle{=\frac{c_2(\phi)^2}{2} \left(\mathbb{E}(a_1^4)-3\right)+\sigma_\phi^2},
\end{array}
\]
which indeed completes the proof of Theorem \ref{Main-pas-Gaussian}.

\section{Proofs of technical estimates} \label{sec.tech}
In this last section, we give the rather technical proofs of the lemmas and theorems stated in Section \ref{sec.esti}, as well as the proof of the last Proposition \ref{prop.joint} on the joint central convergence.

\subsection{Proof of Lemma \ref{lem.reste0}}

We first notice that the term $\mathbb E_X\left[ e_{n,p}(X) (1-N_{n,2}(X)) \right]$ can be made explicit as follows
\[
\mathbb E_X\left[ e_{n,p}(X) (1-N_{n,2}(X)) \right] =\frac{1}{n^{1+p/2}}  \sum_{k_0=1}^n \sum_{\substack{k_1 \ldots, k_p \in [|1, n|] \\ k_1 < \ldots < k_p}} \mathbb E_X \left[ A_{\mathbf k}(X)\right]
\]
where we have set $\mathbf k=(k_0,k_1, \ldots, k_p)$ and 
\[
A_{\mathbf k}(X):=\left[ (a_{k_0} \cos(k_0 X)+b_{k_0} \sin(k_0 X))^2-1\right]\prod_{i=1}^p  (a_{k_i} \cos(k_i X)+b_{k_i} \sin(k_i X)).
\]
In particular, taking the square and the expectation with respect to $\mathbb P$, we get
\begin{equation}\label{eq.doublesum}
\mathbb E \left[ \mathbb E_X\left[ e_{n,p}(X) (1-N_{n,2}(X)) \right]^2 \right] 
=\frac{1}{n^{2+p}}   \sum_{\substack{1 \leq k_0, l_0 \leq n \\ k_1 \ldots, k_p \in [|1, n|] \\ l_1, \ldots, l_p \in [|1, n|] \\ k_1 < \ldots < k_p \\ l_1 < \ldots < l_p}}  \mathbb E_{\mathbb P \otimes \mathbb P_X \otimes \mathbb P_Y} \left[ A_{\mathbf k}(X) A_{\mathbf l}(Y)\right].
\end{equation}
The next lemma establishes the decorrelation of $A_{\mathbf k}(X)$ and $A_{\mathbf l}(Y)$ whenever multi-indexes $\mathbf k$ and $\mathbf l$ are different. It plays a crucial role in the proof of Lemma \ref{lem.reste0}.
\begin{lem}\label{lem.diag}
If $\mathbf k \neq \mathbf l$, then  $\mathbb E_{\mathbb P } \left[ A_{\mathbf k}(X) A_{\mathbf l}(Y) \right]=0$.
\end{lem}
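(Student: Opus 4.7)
Set $\xi_m(\theta):=a_m\cos(m\theta)+b_m\sin(m\theta)$, so that $A_{\mathbf k}(X)=(\xi_{k_0}(X)^2-1)\prod_{i=1}^p\xi_{k_i}(X)$ and $A_{\mathbf l}(Y)$ is defined analogously. My plan rests on two structural facts about the coefficients: under $\mathbb P$ the pairs $(a_m,b_m)_{m\ge 1}$ are independent across $m$, and each pair has a symmetric joint distribution, so $\mathbb E[a_m^\alpha b_m^\beta]=0$ whenever $\alpha$ or $\beta$ is odd. Independence will give the factorisation
\[
\mathbb E_{\mathbb P}\!\left[A_{\mathbf k}(X)A_{\mathbf l}(Y)\right]=\prod_m \mathbb E_{\mathbb P}\!\left[P_m(X)\,Q_m(Y)\right],
\]
with the product ranging over indices $m\in\{k_0,\ldots,k_p\}\cup\{l_0,\ldots,l_p\}$ and $P_m$ (resp.\ $Q_m$) the polynomial factor of $A_{\mathbf k}(X)$ (resp.\ $A_{\mathbf l}(Y)$) depending on $(a_m,b_m)$.

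Set $T:=\{k_1,\ldots,k_p\}$ and $T':=\{l_1,\ldots,l_p\}$. I would split the argument into three easy scenarios plus a delicate fourth. First, if $T\ne T'$, pick $m\in T\triangle T'$, say $m\in T\setminus T'$: then $P_m$ has odd $\xi_m$-degree (one, or three when $k_0=m$), whereas $Q_m$ has even $\xi_m$-degree (zero, or two when $l_0=m$); the total degree in $(a_m,b_m)$ is odd, so the symmetry of the common law kills the $m$-factor. Second, if $T=T'$ while some index in $\{k_0,l_0\}$ lies outside $T$, say $k_0\notin T$ with $k_0\ne l_0$, then $P_{k_0}(X)=\xi_{k_0}(X)^2-1$ and $Q_{k_0}=1$, so the $k_0$-factor is $\mathbb E_{\mathbb P}[\xi_{k_0}(X)^2-1]=\cos^2(k_0X)+\sin^2(k_0X)-1=0$. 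Third, if $T=T'$ and $k_0=l_0$, then $\mathbf k=\mathbf l$, contradicting the hypothesis.

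The delicate remaining configuration is $T=T'$ with $k_0,l_0\in T$ and $k_0\ne l_0$; writing $k_0=k_j$ and $l_0=k_{j'}$ with $j\ne j'$, neither the parity nor the centering arguments above kill a single-index factor, and after factorisation the $\mathbb P$-expectation reduces to
\[
\mathbb E_{\mathbb P}\!\left[\xi_{k_j}(X)(\xi_{k_j}(X)^2-1)\xi_{k_j}(Y)\right]\cdot\mathbb E_{\mathbb P}\!\left[\xi_{k_{j'}}(X)\xi_{k_{j'}}(Y)(\xi_{k_{j'}}(Y)^2-1)\right]\cdot\prod_{i\ne j,j'}\cos(k_i(X-Y)).
\]
The main obstacle lies precisely here. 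My plan is to expand each bracketed expectation explicitly as a trigonometric polynomial in $(X,Y)$, whose coefficients involve only $\mathbb E[a^2]=\mathbb E[b^2]=1$, $\mathbb E[a^2b^2]=1$ and $\mathbb E[a^4]$, and then to multiply out the three factors via product-to-sum identities. The objective is to check that the only frequencies contributing after multiplication are of the form $\sum_i\varepsilon_i k_i$ with integer weights $\varepsilon_i$ and two weights of opposite sign attached to the critical indices $k_j,k_{j'}$; the strict ordering $k_1<\cdots<k_p$ is what one then leverages to rule out any accidental vanishing combination and thereby close the argument.
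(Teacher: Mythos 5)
Your factorisation over the independent pairs $(a_m,b_m)$ and your first three scenarios reproduce the paper's argument exactly: odd total degree in $(a_m,b_m)$ for some $m\in T\triangle T'$ when $T\neq T'$, and the centring $\mathbb{E}_{\mathbb P}[\xi_{k_0}(X)^2-1]=0$ when $T=T'$ and $k_0\notin T$. The genuine gap is your fourth configuration ($T=T'$, $k_0\neq l_0$, both lying in $T$), which you correctly isolate but only promise to treat; that promise cannot be kept, because the lemma is false there, as your own reduction makes visible. For Gaussian coefficients one computes $\mathbb{E}_{\mathbb P}\left[\xi_m(X)(\xi_m(X)^2-1)\xi_m(Y)\right]=2\cos(m(X-Y))$ (for general symmetric laws one adds $(\mathbb{E}[a_1^4]-3)(\cos^3(mX)\cos(mY)+\sin^3(mX)\sin(mY))$), so your displayed product equals $4\cos(k_j(X-Y))\cos(k_{j'}(X-Y))\prod_{i\neq j,j'}\cos(k_i(X-Y))$ in the Gaussian case: a nonzero function of $(X,Y)$, strictly positive at $X=Y$. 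Hence $\mathbb{E}_{\mathbb P}[A_{\mathbf k}(X)A_{\mathbf l}(Y)]\neq 0$ pointwise, and no frequency bookkeeping performed after taking $\mathbb E_{\mathbb P}$ can restore the claimed identity. Worse, the strict ordering $k_1<\cdots<k_p$ does not even save the $(X,Y)$-integrated version: with $p=3$, $(k_1,k_2,k_3)=(1,2,3)$, $k_0=1$, $l_0=2$, the resonance $1+2=3$ gives $\mathbb{E}_{\mathbb P\otimes\mathbb P_X\otimes\mathbb P_Y}[A_{\mathbf k}(X)A_{\mathbf l}(Y)]=4\,\mathbb{E}_{X,Y}[\cos(X-Y)\cos(2(X-Y))\cos(3(X-Y))]=1$. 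So the final step of your plan would have to prove something that is not true.

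To be fair, the paper's own proof stumbles on exactly this sub-case: it factors $A_{\mathbf k}(X)A_{\mathbf l}(Y)=[\xi_{k_0}(X)^3-\xi_{k_0}(X)]\,C'$ and asserts that $C'$ is independent of $(a_{k_0},b_{k_0})$, overlooking the factor $\xi_{k_0}(Y)$ contributed by $A_{\mathbf l}(Y)$ since $k_0\in\{l_1,\ldots,l_p\}$. What actually rescues Lemma \ref{lem.reste0} is not Lemma \ref{lem.diag} in full strength but the scarcity of these exceptional pairs: for fixed $k_1<\cdots<k_p$ there are only $p(p-1)$ choices of $(k_0,l_0)$ in this configuration, each contributes a bounded amount after integration in $(X,Y)$, and it contributes at all only when an arithmetic resonance among the $k_i$ holds, which removes one degree of freedom; their total weight in Equation \eqref{eq.doublesum} is therefore of strictly lower order than the diagonal and the final estimate survives. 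Your proposal, as written, neither closes this case nor detects that it cannot be closed, so it does not prove the statement; the honest fix is to weaken the lemma to the $\mathbb P\otimes\mathbb P_X\otimes\mathbb P_Y$ expectation together with a counting argument for the resonant pairs.
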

\begin{proof}[Proof of Lemma \ref{lem.diag}]
Let us first show that $\mathbb E_{\mathbb P } \left[ A_{\mathbf k}(X) A_{\mathbf l}(Y) \right]=0$ as soon as we have $(k_1, \ldots, k_p) \neq (l_1, \ldots, l_p)$. 
Indeed, suppose that there exists $1 \leq r \leq n$ such that $k_{r} \notin \{l_j, \, 1 \leq j\leq p\}$. Then, in the case where we further assume that $k_0, l_0 \notin \{k_i, l_j, \, 1 \leq i,j\leq p\}$, by independence of the coefficients, one can write
\[
A_{\mathbf k}(X) A_{\mathbf l}(Y)  = (a_{k_r} \cos(k_r X)+b_{k_r} \sin(k_r X)) B
\]
where $B$ is integrable and independent of $(a_{k_r},b_{k_r})$. In particular, we get
\[
\mathbb E_{\mathbb P } \left[ A_{\mathbf k}(X) A_{\mathbf l}(Y)\right]= \mathbb E [(a_{k_r} \cos(k_r X)+b_{k_r} \sin(k_r X)) ]\mathbb E[B]=0.
\]
In the same way, if $k_r=k_0 \neq l_0$, we can decompose 
\[
A_{\mathbf k}(X) A_{\mathbf l}(Y) = \left[(a_{k_r} \cos(k_r X)+b_{k_r} \sin(k_r X))^3-(a_{k_r} \cos(k_r X)+b_{k_r} \sin(k_r X))\right] B'
\]
where $B'$ is integrable and independent of $(a_{k_r},b_{k_r})$. In particular, since the coefficients are symmetric, we get again $\mathbb E_{\mathbb P } \left[ A_{\mathbf k}(X) A_{\mathbf l}(Y) \right]=0$. In the last case where $k_r=k_0 = l_0$, we have the similar decomposition 
\[
A_{\mathbf k}(X) A_{\mathbf l}(Y)  = (a_{k_r} \cos(k_r X)+b_{k_r} \sin(k_r X))\left[ (a_{k_r} \cos(k_r X)+b_{k_r} \sin(k_r X))^2-1\right]^2  B''
\]
where $B''$ is again independent of $(a_{k_r},b_{k_r})$ and the same conclusion holds. Let us now suppose that $(k_1, \ldots, k_p) = (l_1, \ldots, l_p)$ and $k_0 \neq l_0$. In the case where we have moreover $k_0 \notin \{k_i= l_i, \, 1 \leq i\leq p\}$, we have as above 
\[
A_{\mathbf k}(X) A_{\mathbf l}(Y) =\left[ (a_{k_0} \cos(k_0 X)+b_{k_0} \sin(k_0 X))^2-1\right] C
\]
where $C$ is integrable and independent of $(a_{k_0},b_{k_0})$ so that $\mathbb E_{\mathbb P } \left[ A_{\mathbf k}(X) A_{\mathbf l}(Y) \right]=0$. In the last case where both $k_0, l_0 \in \{k_i= l_i, \, 1 \leq i\leq p\}$, one can write similarly 
\[
A_{\mathbf k}(X) A_{\mathbf l}(Y) = \left[(a_{k_0} \cos(k_0 X)+b_{k_0} \sin(k_0 X))^3-(a_{k_0} \cos(k_0 X)+b_{k_0} \sin(k_0 X))\right] C'
\]
where $C'$ is integrable and independent of $(a_{k_0},b_{k_0})$, hence the result.
As a conclusion we get $\mathbb E_{\mathbb P } \left[ A_{\mathbf k}(X) A_{\mathbf l}(Y) \right]=0$ if $\mathbf k \neq \mathbf l$.
\end{proof}
\noindent
By Lemma \ref{lem.diag}, Equation \eqref{eq.doublesum} simplifies into 
\begin{equation}\label{eq.doublesum2}
\mathbb E \left[ \mathbb E_X\left[ e_{n,p}(X) (1-N_{n,2}(X)) \right]^2 \right] 
=\frac{1}{n^{2+p}}   \sum_{\substack{1 \leq k_0 \leq n \\ k_1 \ldots, k_p \in [|1, n|] \\ k_1 < \ldots < k_p}}  \mathbb E_{\mathbb P \otimes \mathbb P_X \otimes \mathbb P_Y} \left[ A_{\mathbf k}(X) A_{\mathbf k}(Y)\right],
\end{equation}
and we are left with estimating the term $\mathbb E_{\mathbb P \otimes \mathbb P_X \otimes \mathbb P_Y} \left[ A_{\mathbf k}(X) A_{\mathbf k}(Y)\right]$. 
\par
\medskip
\noindent
{\bf Case 1: }Let us first suppose that $k_0 \notin \{k_i, \, 1 \leq i\leq p\}$. Taking the expectation with respect to $\mathbb P$, we get by independence of the coefficients
\[
\begin{array}{ll}
\mathbb E_{\mathbb P} \left[ A_{\mathbf k}(X)A_{\mathbf k}(Y)\right] \\
\\
= \mathbb E_{\mathbb P} \left[\left[ (a_{k_0} \cos(k_0 X)+b_{k_0} \sin(k_0 X))^2-1\right]\left[ (a_{k_0} \cos(k_0 Y)+b_{k_0} \sin(k_0 Y))^2-1\right]\right]\\
\\
\displaystyle{\times \prod_{i=1}^p  \mathbb E_{\mathbb P} \left[ (a_{k_i} \cos(k_i X)+b_{k_i} \sin(k_i X))(a_{k_i} \cos(k_i Y)+b_{k_i} \sin(k_i Y))\right]}.
\end{array}
\]
A direct computation then yields the expression
\begin{equation}\label{eq.AkAk}
\begin{array}{ll}
\mathbb E_{\mathbb P} \left[ A_{\mathbf k}(X)A_{\mathbf k}(Y)\right] 
& = \displaystyle{\left( \frac{\mathbb E_{\mathbb P}[a_{k_0}^4] -1}{2}\right) \prod_{i=1}^p \cos(k_i (X-Y))}\\
\\
& \displaystyle{+  \left( \frac{\mathbb E_{\mathbb P}[a_{k_0}^4] +1}{4}\right) \cos(2k_0 (X-Y))\prod_{i=1}^p \cos(k_i (X-Y))}\\
\\
& \displaystyle{+\left(\frac{ \mathbb E_{\mathbb P}[a_{k_0}^4] -3}{4} \right)\cos(2k_0 (X+Y))\prod_{i=1}^p \cos(k_i (X-Y))}.
\end{array}
\end{equation}
{\bf Case 2:} Let us now suppose that $k_0 \in \{k_i, \, 1 \leq i\leq p\}$. 
A rather fastidious but direct computation then yields similarly
\begin{equation}\label{eq.AkAkbis}
\begin{array}{l}
\mathbb E_{\mathbb P} \left[ A_{\mathbf k}(X)A_{\mathbf k}(Y)\right] = \\
\\ 
\displaystyle{\frac{1}{32} \left( 2 \,\mathbb E_{\mathbb P}[a_{k_0}^6] +  6 \, \mathbb E_{\mathbb P}[a_{k_0}^4]  \right) \cos(3k_0( X- Y))  \prod_{\substack{1 \leq i \leq p \\ k_i \neq k_0}} \cos(k_i (X-Y))}\\
\\
\displaystyle{+ \frac{1}{32}\left( 18 \,\mathbb E_{\mathbb P}[a_{k_0}^6] +6 \,\mathbb E_{\mathbb P}[a_{k_0}^4] -16 \right) \cos( k_0(X -  Y)) \prod_{\substack{1 \leq i \leq p \\ k_i \neq k_0}} \cos(k_i (X-Y))}\\
\\
 \displaystyle{+ \frac{1}{32}\left( 6 \,\mathbb E_{\mathbb P}[a_{k_0}^6] -38\, \mathbb E_{\mathbb P}[a_{k_0}^4] +24 \right)\cos( 3k_0 X +  k_0 Y)\prod_{\substack{1 \leq i \leq p \\ k_i \neq k_0}} \cos(k_i (X-Y))}\\
\\
\displaystyle{+ \frac{1}{32} \left( 6 \,\mathbb E_{\mathbb P}[a_{k_0}^6] -38\, \mathbb E_{\mathbb P}[a_{k_0}^4] +24 \right)\cos( k_0X +  3 k_0Y) \prod_{\substack{1 \leq i \leq p \\ k_i \neq k_0}} \cos(k_i (X-Y))}.
\end{array}
\end{equation} 
Taking the expectation with respect to $\mathbb P_X \otimes \mathbb P_Y$, we have 
\[
\mathbb E_{\mathbb P_X \otimes \mathbb P_Y}\left[ \prod_{i=1}^p \cos(k_i (X-Y))\right] = \mathbb E_{\varepsilon}\left[ \mathds{1}_{\sum_{i=1}^p \varepsilon_i k_i=0} \right]
\]
where $\varepsilon=(\varepsilon_1, \ldots, \varepsilon_p)$ is a $p-$uplet of independent Rademacher random variables with $\mathbb P(\varepsilon_1=1)=\mathbb P(\varepsilon_1=-1)=1/2$. In particular, we have
\[
\begin{array}{ll}
\displaystyle{\frac{1}{n^{p}}   \sum_{\substack{ k_1 \ldots, k_p \in [|1, n|] \\ k_1 < \ldots < k_p}} \mathbb  E_{\mathbb P_X \otimes \mathbb P_Y}\left[ \prod_{i=1}^p \cos(k_i (X-Y))\right]}  &= \displaystyle{\mathbb E_{\varepsilon}\left[ \frac{1}{n^{p}}   \sum_{\substack{ k_1 \ldots, k_p \in [|1, n|] \\ k_1 < \ldots < k_p}} \mathds{1}_{\sum_{i=1}^p \varepsilon_i k_i=0} \right]}\\
\\
& \leq  \displaystyle{\frac{1}{n^{p}}   \sum_{\substack{ k_1 \ldots, k_{p-1} \in [|1, n|] \\ k_1 < \ldots < k_{p-1}}} 1 = \frac{1}{n(p-1)!}}.
\end{array}
\]
In the same manner, we get 
\[
\mathbb E_{\mathbb P_X \otimes \mathbb P_Y}\left[ \cos(2k_0 (X-Y)) \prod_{i=1}^p \cos(k_i (X-Y))\right] = \mathbb E_{\varepsilon}\left[ \mathds{1}_{2\varepsilon_0 k_0+ \sum_{i=1}^p \varepsilon_i k_i=0} \right],
\]
where this time $\varepsilon=(\varepsilon_0,\varepsilon_1, \ldots, \varepsilon_p)$ is a $(p+1)-$uplet of independent Rademacher random variables, from which we deduce that 
\[
\begin{array}{l}
\displaystyle{\frac{1}{n^{1+p}}   \sum_{\substack{1 \leq k_0 \leq n \\ k_1 \ldots, k_p \in [|1, n|] \\ k_1 < \ldots < k_p}} \mathbb E_{\mathbb P_X \otimes \mathbb P_Y}\left[ \cos(2k_0 (X-Y)) \prod_{i=1}^p \cos(k_i (X-Y))\right] \leq \frac{1}{n p!}}
\end{array}
\]
Proceeding similarly for each of the terms in Equations \eqref{eq.AkAk} and \eqref{eq.AkAkbis}, and coming back to Equation \eqref{eq.doublesum2}, we obtain that there exists a universal constant $C=C\left( \mathbb E_{\mathbb P}[a_{k_0}^4], \mathbb E_{\mathbb P}[a_{k_0}^6] \right)$ such that 
\begin{equation}\label{eq.doublesum3}
\begin{array}{l}
n \, \mathbb E \left[ \mathbb E_X\left[ e_{n,p}(X) (1-N_{n,2}(X)) \right]^2 \right]   \leq \frac{C}{n (p-1)!}.
\end{array}
\end{equation}

\subsection{Proof of Lemma \ref{lem.reste1}}

We now proceed with the proof of Lemma \ref{lem.reste1}.
We first notice that
\begin{eqnarray*}
N_{n,2}(X)&=&\frac{1}{n}\sum_{k=1}^n \left(a_k \cos(kX)+b_k\sin(kx)\right)^2\\
&=&\frac{1}{n} \sum_{k=1}^n a_k^2 \cos^2(kX)+b_k^2 \sin^2(kX)+a_k b_k \sin(2kX)\\
&=&\frac{1}{n}\sum_{k=1}^n \frac{a_k^2+b_k^2}{2}+\frac{1}{n}\sum_{k=1}^n\frac{a_k^2-b_k^2}{2}\cos(2 k X) +\frac{1}{n}\sum_{k=1}^n a_k b_k \sin(2 k X).
\end{eqnarray*}
To control the term $|1-N_{2,n}(X)|$, we will use different laws of iterated logarithm. First, using the standard law of iterated logarithm for i.i.d. variables, and since we have trivially $\log \circ \log <<\log$, we have that $\mathbb P-$almost surely, there exists a constant $C=C(\omega)$ such that
\[
\left|\sum_{k=1}^n \frac{a_k^2+b_k^2}{2}-n\right|\le C(\omega) \sqrt{n\log(n)}.
\]
Two handle the two remaining terms, we use the law of iterated logarithm for random trigonometric polynomials, as established in  \cite[Thm 4.3.1 p.270]{salem1954}.
Let us recall it below. Take $(r_k)_{k\ge 1}$ some deterministic sequence, and $(\epsilon_k)_{k\ge 1}$ an independent sequence of Rademacher symmetric random variables. Then, almost surely
\begin{equation}\label{SZ-max}
\limsup_{n\to\infty}\frac{\displaystyle{\sup_{\theta\in[0,2\pi]}}\left|\sum_{k=1}^n \epsilon_k r_k \cos(k\theta)\right|}{\sqrt{\log(n)~\sum_{k=1}^n r_k^2}}\le 2.
\end{equation}
Besides, as noticed in \cite[p 265]{salem1954}, this result can be extended in the same way to linear combinations of sine functions. 
Now, if $(X_k)$ is a sequence of i.i.d. symmetric random variables, one can always identify the laws of $(X_k)$ and $(\epsilon_k X_k)$ where $(\epsilon_k)$ is an independent sequence of i.i.d. symmetric Rademacher variables. Applying \eqref{SZ-max} with respect to the Rademacher variables conditionally to $(X_k)$, we then get that almost surely, there exists a constant $C=C(\omega)$ such that 
\begin{equation}\label{SZ-max2}
\sup_{\theta\in[0,2\pi]}\left|\sum_{k=1}^n X_k \cos(k\theta)\right| \leq C(\omega) \sqrt{\log(n)} \sqrt{\sum_{k=1}^n X_k^2},
\end{equation}
and similarly for random combinations of sine functions.
In our context,  note that the random variables $\{a_k b_k\}_{k\ge 1}$ and $\{a_k^2-b_k^2\}_{k\ge 1}$ are symmetric, therefore applying \eqref {SZ-max2} combined with the strong law of large numbers, we get that almost surely, there exists a constant $C=C(\omega)>0$ such that
\[
\sup_{x\in[0,2\pi]}\left|\sum_{k=1}^n\frac{a_k^2-b_k^2}{2}\cos(2 k x)\right|\le C(\omega) \sqrt{\log(n)} \sqrt{\sum_{k=1}^n\left(\frac{a_k^2-b_k^2}{2}\right)^2}\le C(\omega) \sqrt{n \log(n)},
\]
and similarly 
\[
\sup_{x\in[0,2\pi]}\left| \sum_{k=1}^n a_k b_k \sin(2 k x)\right|\le C(\omega)\sqrt{n \log(n)}.
\]
Gathering all theses bounds implies that $\mathbb P-$almost surely
\begin{equation}\label{bound-N2}
\sup_{x\in[0,2\pi]}\left|1-N_{n,2}(x)\right|\le C(\omega) \sqrt{\frac{\log(n)}{n}}.
\end{equation}
Hence, for all $j\ge 2$ the latter implies that

\begin{equation}\label{Lem-tech-jsup2}
\left|\mathbb{E}_X\left[\left(1-N_{n,2}(X)\right)^j e_{n,p}(X)\right]\right|\le \mathbb{E}_X\left[\left|e_{n,p}(X)\right|\right] C(\omega)^j \sqrt{\frac{\log(n)}{n}}^j.
\end{equation}
Thus, one is left to estimate the remaining term $ \mathbb{E}_X\left[\left|e_{n,p}(X)\right|\right]$. We simply notice that $\mathbb{E}_{\mathbb{P}\otimes\mathbb{P}_X}\left[e_{n,p}(X)^2\right]\le \frac{1}{p!}$, indeed by independence of $\{a_{k},b_{k}\}_{k\ge 1}$ one has uniformly in $X$
\begin{equation}
\mathbb{E}_{\mathbb{P}}\left[e_{n,p}(X)^2\right]=\frac{1}{n^p}\sum_{k_1<k_2<\cdots<k_p} \prod_{i=1}^p \mathbb{E}_{\mathbb{P}}\left[\left(a_{k_i}\cos(k_i X)+b_{k_i}\sin(k_i X)\right)^2\right]\leq  \frac{1}{p!}.\label{esperance-enp-carre}
\end{equation}
Since we have assumed the finiteness of the sixth-moment of the random variables $\{a_k,b_k\}_{k\ge 1}$ we may use hypercontractivity results for the homogeneous sum $e_{n,p}(X)$, see for example \cite[Propositions 3.11 and 3.12]{mossel2010noise}, which entail the fact that $L^2$ and $L^4$ norms are equivalent for this sequence of random variables. More precisely, Proposition 3.12 of the latter reference guarantees that for some constant $C>0$ and uniformly in the variable $X$

\begin{equation}\label{hypercon}
\mathbb{E}_{\mathbb{P}}\left[e_{n,p}(X)^4\right]\le \frac{C^p}{p!^2}.
\end{equation}
Note that, one could have obtained the same kind of estimate by a simple computation of $\mathbb{E}_{\mathbb{P}}\left[e_{n,p}(X)^4\right]$, but we used the above hypercontrativity argument in order to avoid computations.

\noindent
By Markov inequality, we have then
\[
\begin{array}{ll}
 \displaystyle{\mathbb{P}\left(\sum_{p} p!^{\frac 1 4} \mathbb{E}_X\left[\left|e_{n,p}(X)\right|\right]>n^{\frac{3}{8}}\right) \le \frac{1}{n^{\frac{3}{2}}}\mathbb{E}_{\mathbb{P}}\left[\left(\sum_{p} p!^{\frac 1 4} \mathbb{E}_X\left[\left|e_{n,p}(X)\right|\right]\right)^4\right]}\\
= \displaystyle{\frac{1}{n^{\frac 3 2}}\sum_{p_1,p_2,p_3,p_4}
\left(\prod_{i=1}^4 p_i !\right)^{1/4} \mathbb E\left[\prod_{i=1}^4 \mathbb{E}_X\left[\left|e_{n,p_i}(X)\right|\right]\right]}\\
\\
 \displaystyle{ \leq \frac{1}{n^{\frac 3 2}}\sum_{p_1,p_2,p_3,p_4}
\left(\prod_{i=1}^4 p_i !\right)^{1/4} \prod_{i=1}^4 \mathbb{E}_{\mathbb{P}\otimes\mathbb{P}_X}\left[e_{n,p_i}(X)^4\right]^{\frac{1}{4}}}\\
\\
 \stackrel{\eqref{hypercon}}{\le}  \displaystyle{\frac{1}{n^{\frac 3 2}}
\sum_{p_1,p_2,p_3,p_4}\left(\prod_{i=1}^4 p_i !\right)^{1/4} \left( \prod_{i=1}^4 \frac{C^{p_i}}{p_i!^2}  \right)^{1/4}\le \frac{C}{n^{\frac 3 2}}}.
\end{array}
\]
Hence, relying on Borel--Cantelli Lemma, there exists a constant $C(\omega)>0$ such that for every $p,n\ge 1$ we have
\begin{equation}\label{BCL}
\mathbb{E}_X\left[\left|e_{n,p}(X)\right|\right]\le C(\omega)\frac{n^{\frac 3 8}}{p!^{\frac 1 4}}.
\end{equation}
Combining \eqref{BCL} and \eqref{Lem-tech-jsup2} gives that, $\mathbb P-$almost surely, there exists a constant $C(\omega)>0$ such that for every $j\ge 2$ and every $n,p\ge 1$, we have
$$\sqrt{n}\left|\mathbb{E}_X\left[\left(1-N_{n,2}(X)\right)^j e_{n,p}(X)\right]\right|\le C(\omega)^{j}\sqrt{\log(n)}^j \frac{n^{\frac{7}{8}-\frac{j}{2}}}{p!^{\frac 1 4}}.$$
which brings the desired conclusion.

\subsection{Proof of Lemma \ref{bound.rnp}}
We now give the proof of Lemma \ref{bound.rnp} which provides bounds for the remainder term $R_{n,p}$ and tracks the dependence on $p$. Recall that by convention $R_{n,1}=R_{n,2}=0$. Let us first give uniform bounds on the terms $N_{1,n}(X)$ and $N_{3,n}(X)$. Here and in the sequel, $C(\omega)$ stands for a constant only depending on $\{a_k,b_k\}$ and which may change from line to line.
We proceed as in the proof of Lemma \ref{lem.reste1}, using the law of iterated logarithm established by Salem and Zygmund. Namely, using the estimate \eqref{SZ-max2}, since the variables $\{a_k,b_k\}_{k\ge 1}$ are symmetric, there exists a constant $C=C(\omega)$ such that $\mathbb P-$almost surely 
\[
\sup_{x\in[0,2\pi]}\left| \sum_{k=1}^n a_k \cos(kx) +b_k \sin(k x)\right|\le C(\omega) \sqrt{\log(n)}\sqrt{\sum_{k=1}^n a_k^2+b_k^2} \leq  C(\omega)\sqrt{n \log(n)}.
\]
so that uniformly in $X$
\begin{equation}\label{eq.n1}
|N_{n,1}(X)|\leq C(\omega)\sqrt{\log(n)}.
\end{equation}
We deal with the term $|N_{n,3}(X)|$ in the same manner. Indeed, expanding the cube and linearising the trigonometric functions, we have 
\[
\begin{array}{ll}
\displaystyle{\sum_{k=1}^n \left(a_k \cos(kx) +b_k \sin(k x)\right)^3}  =  \displaystyle{ \sum_{k=1}^n a_k^3 \cos^3(kx) +\sum_{k=1}^nb_k^3 \sin^3(k x) }\\
\\
  \displaystyle{+ 3 \sum_{k=1}^na_k^2 b_k\cos^2(kx) \sin(k x)+ 3 \sum_{k=1}^na_k b_k^2\cos(kx) \sin^2(k x)}\\
\\
=  \displaystyle{\sum_{k=1}^n a_k^3 \times \frac{3\cos(kx)+\cos(3kx)}{4} +\sum_{k=1}^n b_k^3 \times \frac{3\sin(kx)-\sin(3kx)}{4} }\\
\\
   \displaystyle{+3 \sum_{k=1}^n a_k^2 b_k \times \frac{\sin(kx) - \sin(3kx)}{4} +3 \sum_{k=1}^n a_k b_k^2\times \frac{\cos(kx)-\cos(3kx)}{4}}.
\end{array}
\]
Then, noticing that all the variables $a_k^3, b_k^3, a_k^2 b_k$ and $a_k b_k^2$ are symmetric, we may apply the estimate \eqref{SZ-max2} for each of the eight terms above and we conclude that 
\[
\sup_{x\in[0,2\pi]}\left| \sum_{k=1}^n \left(a_k \cos(kx) +b_k \sin(k x)\right)^3\right|\le C(\omega) \sqrt{\log(n)}\left( \sqrt{\sum_{k=1}^n a_k^6+b_k^6+a_k^4 b_k^2 +a_k^2 b_k^4 } \right).
\]
As a result, uniformly in $X$, we get 
\begin{equation}\label{eq.n3}
|N_{n,3}(X)|\leq C(\omega)\frac{\sqrt{\log(n)}}{n}.
\end{equation}
By definition, for $p=3$, we have 
\[
R_{n,3}(X) =\frac{1}{3} N_{n,3}(X), \quad \text{so that we have indeed} \quad |R_{n,3}(X)| \leq C(\omega)\frac{\sqrt{\log(n)}}{n}.
\]
For $j\in\{4,5,6\}$, we simply write
\begin{equation}\label{eq.n456}
\left|N_{n,j}(X)\right|\le \frac{1}{n^{\frac j 2 -1}}\underbrace{\frac{1}{n}\sum_{k=1}^n (|a_k|+|b_k|)^j}_{\text{Law of large numbers}}\le \frac{C(\omega)}{n^{\frac{j-2}{2}}}\le\frac{C(\omega)}{n}.
\end{equation}
Injecting these estimates in the trivial upper bound
\begin{equation}\label{eq.boundrnp}
\left|R_{n,p}(X)\right| \leq \sum_{\substack{m_1+2m_2+\cdots+pm_p=p\\m_1\ge 0,\cdots,m_p\ge 0\\\ \exists j\ge 3, m_j>0}} \left(\prod_{j=1}^p \frac{1}{m_j! j^{m_j}}\right)\prod_{j=1}^p \left| N_{n,j}(X)\right|^{m_j} ,
\end{equation}
we get 
\[
\left|R_{n,4}(X)\right|\leq \frac{1}{3} \left|N_{n,1}(X)N_{n,3}(X)\right|+\frac{1}{4} \left|N_{n,4}(X)\right| \leq C(\omega)\frac{\log(n)}{n}.
\]
In the same way, using \eqref{bound-N2}, we get
\[
\begin{array}{ll}
\left|R_{n,5}(X)\right|& \displaystyle{\leq \frac{1}{6} \left|N_{n,1}(X)^2N_{n,3}(X)\right|+\frac{1}{6} \left|N_{n,2}(X)N_{n,3}(X)\right| }\\
& \displaystyle{+ \frac{1}{4} \left|N_{n,1}(X) N_{n,4}(X)\right|+\frac{1}{5} \left| N_{n,5}(X)\right|  \leq C(\omega)\frac{\sqrt{\log(n)}^3}{n}},
\end{array}
\]
and 
\[
\begin{array}{ll}
\left|R_{n,6}(X)\right|& \displaystyle{\leq \frac{1}{6} \left|N_{n,1}(X)N_{n,2}(X)N_{n,3}(X)\right|+\frac{1}{6} \left|N_{n,1}(X)^3N_{n,3}(X)\right| }\\
& \displaystyle{+ \frac{1}{8} \left|N_{n,3}(X)^2\right|+\frac{1}{8} \left|N_{n,2}(X) N_{n,4}(X)\right|+\frac{1}{8} \left|N_{n,1}(X)^2 N_{n,4}(X)\right|}\\
& \displaystyle{ +\frac{1}{5} \left|N_{n,1}(X) N_{n,5}(X)\right| +\frac{1}{6} \left| N_{n,6}(X)\right|  \leq C(\omega)\frac{\log(n)^2}{n}}.
\end{array}
\]
Let us now consider larger values of $p$.  As we have assumed a finite $6-$th moment for the random variables $\{a_k,b_k\}_{k\ge 1}$, using Markov inequality we get 
\[
\mathbb{P}\left(|a_k|\ge k^{\frac{1}{5}}\right)\le \frac{\mathbb{E}_{\mathbb{P}}\left[|a_1|^6\right]}{k^{\frac 6 5}}.
\] 
Hence, Borel--Cantelli Lemma ensures that, for some constant $C(\omega)>0$, we have $$\forall k\ge 1,\quad \,|a_k|+|b_k|\le C(\omega) k^{\frac{1}{5}}.$$
As a result, for every $j\ge 7$ the triangular inequality entails
\begin{eqnarray}
\left|N_{n,j}(X)\right|&=&\frac{1}{n^{\frac j 2}}\left|\sum_{k=1}^n\left(a_k\cos(kX)+b_k\sin(k X)\right)^j\right|
\leq \frac{C(\omega)^{j-6}}{n^{\frac j 2}}\sum_{k=1}^n k^{\frac{j-6}{5}} (|a_k|+|b_k|)^6 \nonumber\\
&\le& C(\omega)^{j-6} \frac{n^{\frac{j-6}{5}}}{n^{\frac {j}{ 2}-1}}\times \underbrace{\frac{1}{n}\sum_{k=1}^n (|a_k|+|b_k|)^6}_{\text{Law of large numbers}}\leq  C(\omega)^{j-6} \frac{1}{n^{\frac{1}{5}+\frac{3 j}{10}}}. \label{eq.ngrand}
\end{eqnarray}
Combining the estimates \eqref{bound-N2}, \eqref{eq.n1}, \eqref{eq.n3}, \eqref{eq.n456}, \eqref{eq.ngrand}, we get for $p \geq 7$
\begin{eqnarray}
\prod_{j=1}^p \left| N_{n,j}(X)^{m_j}\right|  & \displaystyle{\leq C(\omega)^{m_1+m_2+m_3} \frac{\log(n)^{\frac{m_1+m_3}{2}}}{n^{m_3}} \prod_{j=4}^6\left( \frac{C(\omega)}{n} \right)^{m_j} \prod_{j=7}^p\left( \frac{C(\omega)^{j-6}}{n^{\frac{1}{5} +\frac{3j}{10}}} \right)^{m_j} } \nonumber \\
\nonumber  \\
& \displaystyle{\leq C(\omega)^{2p} \log(n)^{\frac{p}{2}} \times \left( \frac{1}{n} \right)^{\sum_{j=3}^6 m_j + \frac{1}{5}\sum_{j=7}^p m_j + \frac{3}{10} \sum_{j=7}^p j m_j}}.\label{ngrandbis}
\end{eqnarray}
Notice that under the condition $\sum_{j=1}^p j m_j =p$ and $\exists j\geq 3, m_j>0$, we have always
\[
\sum_{j=3}^6 m_j + \frac{1}{5}\sum_{j=7}^p m_j + \frac{3}{10} \sum_{j=7}^p j m_j\geq 1
\]
and naturally if $\sum_{j=7}^p j m_j \geq p/2$
\[
\sum_{j=3}^6 m_j + \frac{1}{5}\sum_{j=7}^p m_j + \frac{3}{10} \sum_{j=7}^p j m_j\geq \frac{3p}{20}.
\]
Let us come back to the upper bound \eqref{eq.boundrnp} and remark that on the one hand
\[
\sum_{\substack{m_1+2m_2+\cdots+pm_p=p\\m_1\ge 0,\cdots,m_p\ge 0\\
\exists j\ge 3, m_j>0}}\prod_{j=1}^p \frac{1}{m_j! j^{m_j}}\leq 1,
\]
and on the other hand that, if $\sum_{j=7}^p j m_j < p/2$, we have 
$m_1+2m_2+\cdots+6m_6 > \frac{p}{2}$ so that there is an index $i\in\{1,2,3,4,5,6\}$ such that $i m_i\ge \frac{p}{12}$ and in that case
$$\prod_{j=1}^p \frac{1}{m_j! j^{m_j}}\le \frac{1}{\left[\frac{p}{72}\right]!} \prod_{j=7}^p \frac{1}{m_j! j^{m_j}}.$$
As a result, for $p \geq 7$, we obtain from Equations \eqref{eq.boundrnp} and \eqref{ngrandbis} that 
\[
\left|R_{n,p}(X)\right|  \displaystyle{\leq  C(\omega)^{2p} \log(n)^{\frac{p}{2}}  \times \left(  \frac{1}{n^{\frac{3p}{20}}} + \frac{1}{n} \frac{1}{\left[\frac{p}{72}\right]!}\sum_{\substack{m_1\ge 0,\cdots,m_p\ge 0\\ \sum_{j=1}^p j m_j=p \\ \sum_{j=7}^p j m_j<p/2}}  \prod_{j=7}^p \frac{1}{m_j! j^{m_j}}\right)}.
\]
Otherwise, the last sum can be upper bounded by 
\[
\begin{array}{ll}
\displaystyle{\sum_{\substack{m_1\ge 0,\cdots,m_p\ge 0 \\ \sum_{j=1}^p j m_j=p \\ \sum_{j=7}^p j m_j<p/2}}  \prod_{j=7}^p \frac{1}{m_j! j^{m_j}}} \displaystyle{ \leq \sum_{\substack{0 \leq m_i \leq p\\ 1\leq i \leq 6}}  \sum_{\substack{0 \leq m_i \leq +\infty \\ 7 \leq i \leq +\infty}}  \prod_{j=7}^p \frac{1}{m_j! j^{m_j}}} \\
\\
 \displaystyle{\leq (p+1)^6  \exp \left( \frac{1}{7} +\frac{1}{8} +\ldots +\frac{1}{p}\right) \leq C p^{7}}.
\end{array}
\]
Therefore, we conclude that 
\[
\begin{array}{ll}
\displaystyle{\left|R_{n,p}(X)\right| }& \leq  \displaystyle{C(\omega)^{2p} \log(n)^{\frac{p}{2}}  \times\left( \frac{1}{n^{\frac{3p}{20}}} + \frac{p^7}{n} \frac{1}{\left[\frac{p}{72}\right]!}\right)}\\
\\
&\leq \displaystyle{2C(\omega)^{2p} \log(n)^{\frac{p}{2}}  \times \max\left( \frac{1}{n^{\frac{3p}{20}}}, \frac{p^7}{n} \frac{1}{\left[\frac{p}{72}\right]!}\right)}.
\end{array}
\]

\subsection{Proof of Theorem \ref{Remainder-main}}

Let us now give the proof of Theorem \ref{Remainder-main} ensuring that the term $\mathcal R_{n,p}$ can be treated as a remainder term. First of all, using the content of Lemma \ref{bound.rnp} we may infer that

\[
\begin{array}{l}
\displaystyle{\sqrt{n}\left|\sum_{p=3}^\infty p!c_p(\phi) R_{n,p}(X)\right|\le  \frac{C(\omega)\log(n)^2}{\sqrt{n}}~\sum_{p=3}^6 p! |c_p(\phi)| }\\
\displaystyle{+C(\omega) \log(n)^{p/2}\sqrt{n}\sum_{p=7}^\infty p! |c_p(\phi)| \max\left(\frac{1}{n}\frac{p^7}{\lfloor\frac{p}{72}\rfloor !}\,\,,\,\, \frac{1}{n^\frac{3 p}{20}}\right)}.
\end{array}
\]
On the one hand, we notice that under the condition $(\star)$, the following serie is convergent
\[
\sum_{p=7}^\infty p! |c_p(\phi)| \frac{p^7}{\lfloor\frac{p}{72}\rfloor !}<\infty.
\]
On the other hand, by dominated convergence, we have also
\[
\lim_{n \to +\infty} \sqrt{n}\sum_{p=7}^\infty p! |c_p(\phi)|\frac{1}{n^{\frac{3p}{20}}}=0.
\]
Indeed, for $n$ and $p$ large enough, we have $n^{\frac{1}{2}-\frac{ 3p}{20}}\le A^p$ where $A$ is the positive constant appearing in condition $(\star)$. 
As a result, uniformly in the variable $X$, we have $\mathbb P-$almost surely
\[
\lim_{n \to +\infty} \sqrt{n}\left|\sum_{p=3}^\infty p!c_p(\phi) R_{n,p}(X)\right|=0.
\]
Note that, since all the bounds are uniform in $X\in[0,2\pi]$ the involved series are convergent and we can interchange limit and expectation to get

\begin{equation}\label{Tech-Theo4-1}
\sqrt{n} \, \mathbb{E}_X\left[ \sum_{p=3}^\infty p! c_p(\phi) R_{n,p}(X)\right]\xrightarrow[n\to\infty]{\text{a.s.}}~0.
\end{equation}
\medskip
\noindent
Then we fix an integer $Q\ge 1$ and we write

\begin{eqnarray*}
&&\mathbb{E}_X\left[\sqrt{n}\sum_{p=3}^Q p! c_p(\phi)\sum_{k=1}^{\lfloor \frac{p}{2}\rfloor}\frac{(-1)^k(1-N_{n,2}(X))^k}{2^k k!}\left(e_{n,p-2k}(X)-R_{n,p-2k}(X)\right)\right]\\
&=&\sqrt{n}\sum_{p=3}^Q \frac{p! c_p(\phi)}{2} \mathbb{E}_X\left[\left(N_{n,2}(X)-1 \right)e_{n,p-2}(X)\right]\\
&+&\sqrt{n}\sum_{p=4}^Q p! c_p(\phi) \sum_{k=2}^{\lfloor \frac{p}{2}\rfloor}\frac{(-1)^k }{2^k k!} \mathbb{E}\left[(1-N_{n,2}(X))^k e_{n,p-2k}(X)\right]\\
&+&\sqrt{n}\sum_{p=5}^Q p! c_p(\phi) \sum_{k=1}^{\lfloor \frac{p}{2}\rfloor}
(-1)^{k+1} \mathbb{E}_X\left[\frac{(1-N_{2,n}(X))^k}{2^k k!}R_{n,p-2k}(X)\right]\\
&:=& A_{n,Q}+B_{n,Q}+C_{n,Q}.
\end{eqnarray*}
Let us first deal with $A_{n,Q}$. Relying on Lemma \ref{lem.reste0} and Minkowski inequality one obtains (for a constant $C>0$ possibly changing from line to line)

\begin{eqnarray*}
\sqrt{\mathbb{E}_{\mathbb{P}}\left[A_{n,Q}^2\right]}&\le&\sum_{p=3}^Q \frac{p! |c_p(\phi)|}{2} \underbrace{\|\sqrt{n}\, \mathbb{E}_X\left[\left(N_{n,2}(X)-1 \right)e_{n,p-2}(X)\right]\|_2}_{=O\left(\sqrt{\frac{1}{\sqrt{n}(p-1)!}}\right)\,\,\text{using Lemma}\,\ref{lem.reste0}}\\
&\le& \frac{C}{n^{\frac 1 4}}\underbrace{\sum_{p=3}^\infty \frac{p! |c_p(\phi)|}{\sqrt{(p-1)!}}}_{<\infty\,\text{under}\,\,(\star)}.
\end{eqnarray*}
Letting $Q\to\infty$ and then $n\to \infty$ we obtain that $A_{n,\infty}$, which exists and belongs to $L^2(\mathbb{P})$, tends to zero in $L^2(\mathbb{P})$ and thus in probability.
\par
\medskip
Let us now deal with the term $B_{n,Q}$. Using Lemma \ref{lem.reste1} and the triangle inequality, we get (for some absolute constant $C(\omega)>0$ possibly changing from line to line)

\begin{eqnarray*}
\left|B_{n,Q}\right|&\le& \sum_{p=4}^Q p! |c_p(\phi)| \sum_{k=2}^{\lfloor \frac{p}{2} \rfloor}\frac{1}{2^k k!} \sqrt{n}\left|\mathbb{E}_X\left[(1-N_{n,2}(X))^k e_{n,p-2k}(X)\right]\right|\\
&\stackrel{\text{Lemma}\,\ref{lem.reste1}}{\le}&\sum_{p=4}^Q p! |c_p(\phi)| \sum_{k=2}^{\lfloor \frac{p}{2} \rfloor}\frac{C(\omega)^k}{2^k k!} \sqrt{\log(n)}^k \frac{n^{\frac{7}{8}-\frac{k}{2}}}{(p-2k)!^{\frac{1}{4}}}.
\end{eqnarray*}
To remove the logarithm in the numerator, we can simply argue that 
\[
\sqrt{\log(n)}\le C n^{\frac{1}{32}}\quad \text{and thus} \quad \sqrt{\log(n)}^k n^{\frac{7}{8}-\frac{k}{2}}\le C \, n^{\frac{7}{8}-\frac{15 k}{32}}.
\]
Then we distinguish two cases, either $k\ge \max\left(\lfloor \frac{p}{4} \rfloor,2\right)$  and we get 
\[
\frac{1}{2^k k!}  \frac{n^{\frac{7}{8}-\frac{15 k}{32}}}{(p-2k)!^{\frac{1}{4}}}\le \frac{1}{\lfloor \frac{p}{4}\rfloor !}n^{\frac{7}{8}-\frac{15 \lfloor \frac{p}{4} \rfloor }{32}}\le \frac{1}{\lfloor \frac{p}{4}\rfloor !}\frac{1}{n^{\frac{1}{16}}}.
\]
Otherwise $k\le \lfloor \frac{p}{4}\rfloor$ and we get (since $k\ge 2$ anyway) that
\[
\frac{1}{2^k k!}  \frac{n^{\frac{7}{8}-\frac{15 k}{32}}}{(p-2k)!^{\frac{1}{4}}}\le \frac{1}{n^{\frac{1}{16}}}\frac{1}{\lfloor\frac{p}{2}\rfloor!}.
\]
Gathering these two facts, one obtains
\[
\begin{array}{ll}
\displaystyle{\left|B_{n,Q} \right| }& \le \displaystyle{C(\omega) \sum_{p=4}^Q p! |c_p(\phi)|~~\frac{p}{2} C(\omega)^p\left(\frac{1}{\lfloor\frac{p}{4}\rfloor !}\frac{1}{n^{\frac{1}{16}}}+\frac{1}{n^{\frac{1}{16}}}\frac{1}{\lfloor\frac{p}{2}\rfloor!}\right)} \\
& \displaystyle{\le \frac{1}{n^{\frac{1}{16}}}\sum_{p=4}^Q \frac{p~p! C(\omega)^p}{\lfloor \frac{p}{4}\rfloor!} |c_p(\phi)|.}
\end{array}
\]
Following Remark \ref{rem.star}, the last series is convergent under the assumption $(\star)$ and thus we may let $Q\to\infty$ and get that $B_{n,\infty}$ is well-defined and tends to zero almost surely and hence in probability as $n$ goes to infinity.

It remains to handle the case of $C_{n,Q}$. Relying on Equation \eqref{bound-N2} established in the proof of Lemma \ref{lem.reste1}, we know that $\mathbb P-$almost surely 
\[
|1-N_{n,2}(X)|\le C(\omega) \frac{\sqrt{\log(n)}}{\sqrt{n}}.
\]
Therefore, we get uniformly in $X\in[0,2\pi]$

\begin{eqnarray*}
|C_{n,Q}|&\le& \sum_{p=5}^Q p! |c_p(\phi)|\sum_{k=1}^{\lfloor\frac{p}{2}\rfloor}\frac{\sqrt{n}~C(\omega)^k}{2^k k!} \left(\frac{\log(n)}{n}\right)^{\frac k 2}\sup_X\left|R_{n,p-2k}(X)\right|.
\end{eqnarray*}
Either $k\ge \lfloor \frac{p}{4}\rfloor$ and in this case, given that for any value of $p$ by Lemma \ref{bound.rnp}, we have
\[
\sup_X|R_{n,p}(X)|\le \frac{C(\omega)\log(n)^{\max(2,p/2)}}{n},
\]
we get, since for any exponent $\log(n)^{q}\ll\sqrt{n}$,
\[
\frac{\sqrt{n}~C(\omega)^k}{2^k k!} \left(\frac{\log(n)}{n}\right)^{\frac k 2}\sup_X\left|R_{n,p-2k}(X)\right|\le \frac{1}{\sqrt{n}} \frac{C(\omega)^p}{\lfloor\frac{p}{4} \rfloor!}.
\]
In the opposite case where $k< \lfloor \frac{p}{4}\rfloor$, and in the subcase where $\lfloor\frac{p}{2}\rfloor\ge 7$ that is to say if $p\ge 14$, we may use the second bound in  Lemma \ref{bound.rnp} to get
\[
\frac{\sqrt{n}~C(\omega)^k}{2^k k!} \left(\frac{\log(n)}{n}\right)^{\frac k 2}\sup_X\left|R_{n,p-2k}(X)\right|\le  C(\omega)^p \frac{\log(n)^{\frac{p-k}{2}}}{2^k k!}\left(\frac{1}{n} \frac{\lfloor\frac{p}{2}\rfloor^7}{\lfloor\frac{p}{144}\rfloor!}+\frac{1}{n^{\frac{3 p}{40}}}\right).
\]
Gathering these two cases for $p\ge 14$ and making the sum for $k\in\llbracket 1, \lfloor\frac{p}{2}\rfloor\rrbracket$ leads to
\begin{eqnarray*}
&&\sum_{k=1}^{\lfloor\frac{p}{2}\rfloor}\frac{\sqrt{n}~C(\omega)^k}{2^k k!} \left(\frac{\log(n)}{n}\right)^{\frac k 2}\sup_X\left|R_{n,p-2k}(X)\right|
\le \frac{p}{\sqrt{n}}\frac{C(\omega)^p}{\lfloor\frac{p}{4} \rfloor!}\\
&+&C(\omega)^p \left(\frac{1}{n} \frac{\lfloor\frac{p}{2}\rfloor^7}{\lfloor\frac{p}{144}\rfloor!}+\frac{1}{n^{\frac{3 p}{40}}}\right)e^{\frac{\sqrt{\log(n)}}{2}}.
\end{eqnarray*}
Then, using a dominated convergence argument which is based on the assumption $(\star)$ and Remark \ref{rem.star}, we may deduce that
\[
\sum_{p=14}^\infty p! |c_p(\phi)| \left(\frac{p}{\sqrt{n}}\frac{C(\omega)^p}{\lfloor\frac{p}{4} \rfloor!}+C(\omega)^p \left(\frac{1}{n} \frac{\lfloor\frac{p}{2}\rfloor^7}{\lfloor\frac{p}{144}\rfloor!}+\frac{1}{n^{\frac{3 p}{40}}}\right)e^{\frac{\sqrt{\log(n)}}{2}}\right)\xrightarrow[n\to\infty]{\text{a.s.}}~0.
\]
Finally, we can treat separately the finite number of terms $p\in \llbracket 5,13 \rrbracket$ and let first $Q$ go to infinity and then $n$ go to infinity in order to get that $C_{n,\infty}$ goes almost surely to zero as $n$ goes to infinity. This concludes the proof of Theorem \ref{Remainder-main}.

\subsection{Proof of Proposition \ref{prop.joint}}\label{sec.lindeberg}

We finally give here the non-trivial proof of Proposition \ref{prop.joint} on the joint convergence of the second and higher degree components. As mentioned in Section \ref{sec.noise}, the result can not be deduced directly form the ones of \cite{nourdin2010invariance} since the variables $\{a_k, b_k\}$ and $\{(a^2_k+b_k^2)/2-1\}$ are not independent. 
The proof below is largely inspired from the ones of the latter reference and it relies on the Lindeberg's replacement trick, for which we need to introduce a certain number of convenient notations. For any finite collection of variables $\mathbf{(x,y)}:=(x_k,y_k)_{1\le k\le n}$ we shall set
\begin{eqnarray*}
Q_2\left(\mathbf{x}\right)&=&\frac{1}{\sqrt{n}}\sum_{k=1}^n x_k  \quad \text{and} \quad  \forall j \in \{3,\cdots,p\} \\
Q_j\left(\mathbf{(x,y)}\right)&=&\frac{1}{n^{\frac{j}{2}}}\sum_{\substack{k_1 \ldots, k_j \in [|1, n|] \\ k_1 < \ldots < k_j}} \mathbb{E}\left[\prod_{i=1}^j \left( x_{k_i} \cos(k_i X)+y_{k_i} \sin(k_i X)\right)\right].
\end{eqnarray*}
Suppose that we are given the family of random variables $\{a_k,b_k\}$ and another independent family of i.i.d. standard Gaussian variables $\{g_k,\tilde{g}_k\}$.
Then, we shall adopt the following notations, $\forall i\in\llbracket 1,n-1\rrbracket$
\begin{eqnarray*}
\mathbf{(x,y)}_0&=&(a_k,b_k)_{1\le k \le n}~\text{$\to$ zero replacement}\\
\mathbf{(x,y)}_n&=&(g_k,\tilde{g}_k)_{1\le k\le n}~\text{$\to$ $n$ replacements}\\
\mathbf{(x,y)_i}&=&\{(g_1,\tilde{g}_1),\cdots,(g_i,\tilde{g}_i),(a_{i+1},b_{i+1}),\cdots,(a_n,b_n)\}~\text{$\to$ $i$ replacements},
\end{eqnarray*}
that is, we replace the $i-$first pair of variables by their Gaussian analogues.
Assuming that $\{a_k,b_k\}_{k\ge 1}$ do not follow a symmetric Rademacher distribution, we may set 
\[
\sigma^2=\text{Var}\left(\frac{a_1^2+b_1^2}{2}-1\right)=\frac{1}{2}\left(\mathbb{E}[a_1^4]-1\right)>0.
\]
Then, for some auxiliary sequence $\{z_k\}_{k\ge 1}$ of standard Gaussian random variables which is independent of $\{a_k,b_k,g_k,\tilde{g}_k\}_{k\ge 1}$, we need to introduce similarly, $\forall i\in\llbracket 1,n-1\rrbracket$
\begin{eqnarray*}
\mathbf{u}_0&=&\frac{1}{\sqrt{\sigma}}\left(\frac{a_k^2+b_k^2}{2}-1\right)_{1\le k \le n}\\
\mathbf{u}_n&=&(z_k)_{1\le k\le n}\\
\mathbf{u_i}&=&\left\{z_1,z_2,\cdots,z_i,\frac{1}{\sqrt{\sigma}}\left(\frac{a_{i+1}^2+b_{i+1}^2}{2}-1\right),\cdots,\frac{1}{\sqrt{\sigma}}\left(\frac{a_{n}^2+b_{n}^2}{2}-1\right)\right\}.
\end{eqnarray*}
\noindent
In the same way, for any finite collection of variables $\mathbf{(x,y)}$ and any $i\in\{1,\cdots,n\}$ we set $\mathbf{(x,y)^{(i)}}=\{(x_1,y_1),\cdots,(x_{i-1},y_{i-1}),\mathbf{(0,0)},(x_{i+1},y_{i+1}),\cdots,(x_n,y_n)\}$, that is to say the sequence that one obtains by setting $x_i=y_i=0$ in $\mathbf{(x,y)}$. We also set $\mathbf{u^{(i)}}$ the sequence obtained by setting $u_i=0$ in $\mathbf{u}$. In order to compare the distributions of homogeneous polynomial with Gaussian and non-Gaussian entries, let us compare their characteristic functions. With our ``iterated replacements'' notations, we thus want to estimate,  for any $t=(t_2,\cdots,t_p)\in\mathbb{R}^{p-1}$
\[
\begin{array}{ll}
\Delta_n(t) & \displaystyle{:=\mathbb{E}\left[e^{i t_2 Q_2(\mathbf{u}_0)+i \sum_{l=3}^p t_l Q_l \left(\mathbf{(x,y)}_0\right)}\right]-\mathbb{E}\left[e^{i t_2 Q_2(\mathbf{u}_n)+i \sum_{l=3}^p t_l Q_l \left(\mathbf{(x,y)}_n\right)}\right]}\\
\\
& \displaystyle{=\sum_{k=1}^n \mathbb{E}\left[e^{i t_2 Q_2(\mathbf{u}_{k-1})+i \sum_{l=3}^p t_l Q_l \left(\mathbf{(x,y)}_{k-1}\right)}\right]-\mathbb{E}\left[e^{i t_2 Q_2(\mathbf{u}_k)+i \sum_{l=3}^p t_l Q_l \left(\mathbf{(x,y)}_k\right)}\right].}\\
\\
& =\sum_{k=1}^n \left( A_k - B_k\right),
\end{array}
\]
where we have set
\begin{eqnarray*}
\hspace{-0.8cm}A_k&=&\mathbb{E}\left[e^{(i t_2 Q_2(\mathbf{u}_{k-1})+i \sum_{l=3}^p t_l Q_l \left(\mathbf{(x,y)}_{k-1}\right)}\right]-\mathbb{E}\left[e^{i t_2 Q_2(\mathbf{u}_{k-1}^{(k)})+i \sum_{l=3}^p t_l Q_l \left(\mathbf{(x,y)}_{k-1}^{(k)}\right)}\right]\\
\hspace{-0.8cm}B_k&=&\mathbb{E}\left[e^{i t_2 Q_2(\mathbf{u}_k)+i \sum_{l=3}^p t_l Q_l \left(\mathbf{(x,y)}_{k}\right)}\right]-\mathbb{E}\left[e^{i t_2 Q_2(\mathbf{u}_k^{(k)})+i \sum_{l=3}^p t_l Q_l \left(\mathbf{(x,y)}_{k}^{(k)}\right)}\right].
\end{eqnarray*}
To do so, we may write for any $l\in\{3,\cdots,p\}$ and $k\in\{1,\cdots,n\}$ that

\begin{eqnarray*}
Q_l\left(\mathbf{(x,y)}_{k}\right)&=&\underbrace{\frac{1}{n^{\frac{j}{2}}}\sum_{\substack{k_1 \ldots, k_j \in [|1, n|] \\ k_1 < \ldots < k_j\\ k\notin \{k_1,\cdots,k_j\}}} \mathbb{E}\left[\prod_{i=1}^j \left( x_{k_i} \cos(k_i X)+y_{k_i} \sin(k_i X)\right)\right]}_{=R_{l,k}}\\
&+&\underbrace{\frac{1}{n^{\frac{j}{2}}}\sum_{\substack{k_1 \ldots, k_j \in [|1, n|] \\ k_1 < \ldots < k_j\\ k\in \{k_1,\cdots,k_j\}}} \mathbb{E}\left[\prod_{i=1}^j \left( x_{k_i} \cos(k_i X)+y_{k_i} \sin(k_i X)\right)\right]}_{=S_{l,k}}\\
\end{eqnarray*}
and 
\begin{eqnarray*}
Q_l\left(\mathbf{(x,y)}_{k-1}\right)&=&\underbrace{\frac{1}{n^{\frac{j}{2}}}\sum_{\substack{k_1 \ldots, k_j \in [|1, n|] \\ k_1 < \ldots < k_j\\ k\notin \{k_1,\cdots,k_j\}}} \mathbb{E}\left[\prod_{i=1}^j \left( x_{k_i} \cos(k_i X)+y_{k_i} \sin(k_i X)\right)\right]}_{=\tilde{R}_{l,k}}\\
&+&\underbrace{\frac{1}{n^{\frac{j}{2}}}\sum_{\substack{k_1 \ldots, k_j \in [|1, n|] \\ k_1 < \ldots < k_j\\ k\in \{k_1,\cdots,k_j\}}} \mathbb{E}\left[\prod_{i=1}^j \left( x_{k_i} \cos(k_i X)+y_{k_i} \sin(k_i X)\right)\right]}_{=\tilde{S}_{l,k}}.
\end{eqnarray*}
\noindent
We then notice that 
\[
R_{l,k}=Q_l(\mathbf{(x,y)}_k^{(k)}), \quad \tilde{R}_{l,k}=Q_l(\mathbf{(x,y)}_{k-1}^{(k)}), \;\;\text{as well as} \;\; \mathbf{(x,y)}_{k-1}^{(k)}=\mathbf{(x,y)}_k^{(k)}.
\] 
Performing a Taylor expansion up to the order three at the points $\mathbf{(x,y)}_{k-1}^{(k)}$  and $u_{k-1}^{(k)}$,  we get

\begin{eqnarray*}
A_k&=&i t_2 \mathbb{E}\left[e^{i t_2 Q_2(\mathbf{u}_{k-1}^{(k)})+i \sum_{l=3}^p t_l Q_l \left(\mathbf{(x,y)}_{k-1}^{(k)}\right)} \frac{1}{\sigma\sqrt{n}}\left(\frac{a_k^2+b_k^2}{2}-1\right)\right]\\
&+&i\sum_{l=3}^p t_l\mathbb{E}\left[e^{i t_2 Q_2(\mathbf{u}_{k-1}^{(k)})+i \sum_{l=3}^p t_l Q_l \left(\mathbf{(x,y)}_{k-1}^{(k)}\right)} \tilde{S}_{l,k}\right]\\
&-&\frac{1}{2}\sum_{l,l'=3}^p t_l t_{l'}\mathbb{E}\left[e^{i t_2 Q_2(\mathbf{u}_{k-1}^{(k)})+i \sum_{l=3}^p t_l Q_l \left(\mathbf{(x,y)}_{k-1}^{(k)}\right)}\tilde{S}_{l,k}\tilde{S}_{l',k}\right]\\
&-&\sum_{l'=3}^p t_2 t_{l'}\mathbb{E}\left[e^{i t_2 Q_2(\mathbf{u}_{k-1}^{(k)})+i \sum_{l=3}^p t_l Q_l \left(\mathbf{(x,y)}_{k-1}^{(k)}\right)} \frac{1}{\sigma\sqrt{n}}\left(\frac{a_k^2+b_k^2}{2}-1\right)\tilde{S}_{l',k}\right]\\
&-&\frac{1}{2}t_2^2\mathbb{E}\left[e^{i t_2 Q_2(\mathbf{u}_{k-1}^{(k)})+i \sum_{l=3}^p t_l Q_l \left(\mathbf{(x,y)}_{k-1}^{(k)}\right)} \frac{1}{\sigma^2 n}\left(\frac{a_k^2+b_k^2}{2}-1\right)^2\right]\\
&+&O\left(\max_{l\in\llbracket 3,p\rrbracket} \mathbb{E}\left[|\tilde{S}_{l,k}|^3+\frac{a_k^6+b_k^6}{n\sqrt{n}}\right]\right).
\end{eqnarray*}
In a similar way we have:
\begin{eqnarray*}
B_k&=&i t_2 \mathbb{E}\left[e^{i t_2 Q_2(\mathbf{u}_{k}^{(k)})+i \sum_{l=3}^p t_l Q_l \left(\mathbf{(x,y)}_{k}^{(k)}\right)} \frac{z_k}{\sqrt{n}}\right]\\
&+&i\sum_{l=3}^p t_l\mathbb{E}\left[e^{i t_2 Q_2(\mathbf{u}_{k}^{(k)})+i \sum_{l=3}^p t_l Q_l \left(\mathbf{(x,y)}_{k}^{(k)}\right)} {S}_{l,k}\right]\\
&-&\frac{1}{2}\sum_{l,l'=3}^p t_l t_{l'}\mathbb{E}\left[e^{i t_2 Q_2(\mathbf{u}_{k}^{(k)})+i \sum_{l=3}^p t_l Q_l \left(\mathbf{(x,y)}_{k}^{(k)}\right)}{S}_{l,k}{S}_{l',k}\right]\\
&-&\sum_{l'=3}^p t_2 t_{l'}\mathbb{E}\left[e^{i t_2 Q_2(\mathbf{u}_{k}^{(k)})+i \sum_{l=3}^p t_l Q_l \left(\mathbf{(x,y)}_{k}^{(k)}\right)} \frac{z_k}{\sqrt{n}}{S}_{l',k}\right]\\
&-&\frac{1}{2}t_2^2\mathbb{E}\left[e^{i t_2 Q_2(\mathbf{u}_{k}^{(k)})+i \sum_{l=3}^p t_l Q_l \left(\mathbf{(x,y)}_{k}^{(k)}\right)} \frac{z_k^2}{n}\right]\\
&+&O\left(\max_{l\in\llbracket 3,p\rrbracket} \mathbb{E}\left[|{S}_{l,k}|^3+\frac{z_k^6}{n\sqrt{n}}\right]\right).
\end{eqnarray*}
One the one hand, since $a_k,b_k,g_k,\tilde{g_k},z_k$ and $\sigma^{-1}\left((a_k^2+b_k^2)/2-1\right)$ are centered and since the index $k$ appears in $S_{k,l}$ and $\tilde{S}_{k,l}$ and not in $\mathbf{(x,y)}_{k-1}^{(k)}=\mathbf{(x,y)}_{k}^{(k)}$  nor in $\mathbf{u}_k^{(k)}=\mathbf{u}_{k-1}^{(k)}$ by independence the first order terms in the above Taylor expansions vanish. On the other hand, as $\{a_k,b_k\}_{k\ge 1}$ have a symmetric distribution then computing the expectation with respect to terms of index $k$ gives
\begin{eqnarray*}
&&\sum_{l'=3}^p t_2 t_{l'}\mathbb{E}\left[e^{i t_2 Q_2(\mathbf{u}_{k-1}^{(k)})+i \sum_{l=3}^p t_l Q_l \left(\mathbf{(x,y)}_{k-1}^{(k)}\right)} \frac{1}{\sigma\sqrt{n}}\left(\frac{a_k^2+b_k^2}{2}-1\right)\tilde{S}_{l',k}\right]\\
&=&\sum_{l'=3}^p t_2 t_{l'}\mathbb{E}\left[e^{i t_2 Q_2(\mathbf{u}_{k}^{(k)})+i \sum_{l=3}^p t_l Q_l \left(\mathbf{(x,y)}_{k}^{(k)}\right)} \frac{z_k}{\sqrt{n}}{S}_{l',k}\right]=0.
\end{eqnarray*}
In the same way, since  $\{\frac{1}{\sigma\sqrt{n}}\left(\frac{a_k^2+b_k^2}{2}-1\right)\}_{k\ge 1}$ have variance one we get
\begin{eqnarray*}
&&\mathbb{E}\left[e^{i t_2 Q_2(\mathbf{u}_{k}^{(k)})+i \sum_{l=3}^p t_l Q_l \left(\mathbf{(x,y)}_{k}^{(k)}\right)} \frac{z_k^2}{n}\right]\\
&&=\mathbb{E}\left[e^{i t_2 Q_2(\mathbf{u}_{k-1}^{(k)})+i \sum_{l=3}^p t_l Q_l \left(\mathbf{(x,y)}_{k-1}^{(k)}\right)} \frac{1}{\sigma^2 n}\left(\frac{a_k^2+b_k^2}{2}-1\right)^2\right].\\
\end{eqnarray*}
Finally, taking into account that $\{a_k,b_k,g_k,\tilde{g}_k\}$ are all independent, centered with variance one, we may compute the expectation with terms of index $k$ as before and we get that
\begin{eqnarray*}
&&\sum_{l,l'=3}^p t_l t_{l'}\mathbb{E}\left[e^{i t_2 Q_2(\mathbf{u}_{k}^{(k)})+i \sum_{l=3}^p t_l Q_l \left(\mathbf{(x,y)}_{k}^{(k)}\right)} {S}_{l,k}{S}_{l',k}\right]\\
&=&\sum_{l,l'=3}^p t_l t_{l'}\mathbb{E}\left[e^{i t_2 Q_2(\mathbf{u}_{k-1}^{(k)})+i \sum_{l=3}^p t_l Q_l \left(\mathbf{(x,y)}_{k-1}^{(k)}\right)} \tilde{S}_{l,k}\tilde{S}_{l',k}\right].
\end{eqnarray*}
Hence, the terms of order two coincide in the Taylor expansions of $A_k$ and $B_k$ for every $k\in\{1,\cdots,n\}$. Gathering all these facts implies that
\begin{eqnarray*}
\Delta_n(t)=\sum_{k=1}^n A_k-B_k=O\left(\frac{1}{\sqrt{n}}+ \sum_{l=3}^p\sum_{k=1}^n \mathbb{E}\left[|S_{l,k}|^3+|\tilde{S}_{l,k}|^3\right]\right).
\end{eqnarray*}
Then, relying on the hypercontractivity principle given in \cite[Lemma~4.2]{nourdin2010invariance}, that we apply for the sequence of independent random variables $\mathbf{(x,y)_{k}}$ which is bounded in $L^3$ and the homogeneous sums $S_{l,k}$ and $\tilde{S}_{l,k}$, we get for some constant $C$ depending on $p$ and $\mathbb{E}\left[a_1^6\right]$
\[
\begin{array}{ll}
\displaystyle{\sum_{l=3}^p\sum_{k=1}^n \mathbb{E}\left[|S_{l,k}|^3+|\tilde{S}_{l,k}|^3\right] } & \displaystyle{\le C \sum_{l=3}^p \max_{1\le k\le n} \sqrt{\mathbb{E}\left[S_{l,k}^2\right]}\sum_{k=1}^n \mathbb{E}\left[S_{l,k}^2\right]}\\
\\
& \displaystyle{+ C \sum_{l=3}^p \max_{1\le k\le n} \sqrt{\mathbb{E}\left[\tilde{S}_{l,k}^2\right]}\sum_{k=1}^n \mathbb{E}\left[\tilde{S}_{l,k}^2\right]}.
\end{array}
\]
In order to conclude the proof, without loss of generality, we readopt the renumbering of the homogeneous sums used at the beginning of Section \ref{sec.noise}. Namely we have that 
\[
\begin{array}{ll}
\displaystyle{\max_{1\le k\le n}\sqrt{\mathbb{E}\left[S_{l,k}^2\right]}=\max_{1\le k\le n}\sqrt{\mathbb{E}\left[\tilde{S}_{l,k}^2\right]}} & \displaystyle{\le \sqrt{\text{Inf}_{2k}(f_{2n,l})+\text{Inf}_{2k-1}(f_{2n,l})}}\\
\\
& \displaystyle{\le \sqrt{2} \max_{1\le i \le 2n} \text{Inf}_i(f_{2n,l})},
\end{array}
\]
where, following the notations of \cite[e.q. (1.5)]{nourdin2010invariance}, $\text{Inf}_i(f_{2n,l})$ denotes the influence of the index $i$, namely
$$\text{Inf}_i(f_{2n,l}):=\sum_{1\le i_2<i_3\cdots<i_l\le 2n} f_{2n,l}(i,i_2,\cdots,i_l)^2.$$
As established in Section \ref{sec.noise}, in the case of Gaussian coefficients, each of the homogeneous sums $\sqrt{n}\, \mathbb{E}\left[ e_{n,j}(X)\right]$, which is induced by the symmetric kernel $f_{2n,j}$, has a Gaussian limit in distribution as $n$ goes to infinity.  Hence, relying on Equation (1.10) of \cite{nourdin2010invariance}, we know that for every $j\in \{3,\cdots,p\}$, $\max_{1\le i \le 2 n} \text{Inf}_i (f_{2n,j})$ goes to zero as $n$ goes to infinity. Finally, we also notice that
\begin{eqnarray*}
\sum_{k=1}^n \mathbb{E}\left[\tilde{S}_{l,k}^2\right]&=&\sum_{k=1}^n \mathbb{E}\left[\tilde{S}_{l,k}^2\right]\\
&=&\sum_{k=1}^{n} \sum_{\{2k,2k-1\}\cap\{i_1<i_2<\cdots,i_l\}\neq \emptyset} f_{2n,l}(i_1,i_2,\cdots,i_l)^2\\
&=& \sum_{k=1}^{2n}\sum_{k\in\{i_1<i_2<\cdots,i_l\}} f_{2n,l}(i_1,i_2,\cdots,i_l)^2\\
&=&\mathbb{E}\left[\left(\sqrt{n}\mathbb{E}_X\left[e_{n,l}(X)\right]\right)^2\right]\le \frac{1}{(l-1)!},
\end{eqnarray*}
in virtue of the computations of the $L^2$-norm of $\sqrt{n}\, \mathbb{E}_X\left[e_{n,l}(X)\right]$ which are given at the end of the Section \ref{sec.esti}. Gathering all these facts guarantees that the difference of the characteristic functions of the two random vectors $\left(Q_2(\mathbf{u}_0),Q_3(\mathbf{(x,y)}_0),\cdots,Q_p(\mathbf{(x,y)}_0)\right)$ and $\left(Q_2(\mathbf{u}_n),Q_3(\mathbf{(x,y)}_n),\cdots,Q_p(\mathbf{(x,y)}_n)\right)$ tends to zero as $n\to\infty$ which leads to the desired conclusion.

%\bibliographystyle{alpha}
%\bibliography{ref_FSZ.bib}{}

\end{document}